\newtheorem{theorem}{Theorem}[section]
\newtheorem{corollary}[theorem]{Corollary}
\newtheorem{lemma}[theorem]{Lemma}
\newtheorem{proposition}[theorem]{Proposition}
\newtheorem{remark}[theorem]{Remark}
\newtheorem{example}[theorem]{Example}
\newcommand{\hooklongrightarrow}{\lhook\joinrel\longrightarrow}
\newcommand{\twoheadlongrightarrow}{\relbar\joinrel\twoheadrightarrow}
\newcommand{\ra}{\rightarrow}
\newcommand{\lra}{\longrightarrow}
\newcommand{\Q}{\mathbb Q}
\newcommand{\Z}{\mathbb Z}
\newcommand{\bP}{\mathbb P}
\newcommand{\cR}{\mathcal R}
\newcommand{\cD}{\mathcal D}
\newcommand{\fh}{\mathfrak h}
\newcommand{\fp}{\mathfrak p}
\newcommand{\fz}{\mathfrak z}
\newcommand{\fc}{\mathfrak c}
\newcommand{\sL}{\mathscr L}
\newcommand{\sF}{\mathscr F}
\newcommand{\sG}{\mathscr G}
\DeclareMathOperator{\gl}{\mathfrak gl}
\DeclareMathOperator{\GL}{\mathrm GL}
\DeclareMathOperator{\Res}{\mathrm Res}
\DeclareMathOperator{\Sym}{\mathrm Sym}
\DeclareMathOperator{\Gal}{\mathrm Gal}
\DeclareMathOperator{\Hom}{\mathrm Hom}
\DeclareMathOperator{\End}{\mathrm End}
\DeclareMathOperator{\an}{\mathrm an}
\DeclareMathOperator{\dR}{\mathrm dR}
\DeclareMathOperator{\Ind}{\mathrm Ind}
\DeclareMathOperator{\Ima}{\mathrm Im}
\DeclareMathOperator{\id}{\mathrm id}
\DeclareMathOperator{\dett}{\mathrm det}
\DeclareMathOperator{\Ad}{\mathrm Ad}
\DeclareMathOperator{\wt}{\mathrm wt}
\DeclareMathOperator{\inte}{\mathrm int}
\DeclareMathOperator{\DF}{\mathrm DF}
\DeclareMathOperator{\pst}{\mathrm pst}
\DeclareMathOperator{\val}{\mathrm val}
\begin{document}

	\title{Change of weights for locally analytic representations of $\GL_2(\Q_p)$}
	\author{Yiwen Ding}
	\date{}
	\maketitle
		\begin{abstract}
		Let $D_1\subset D_2$ be $(\varphi, \Gamma)$-modules of rank $2$ over the Robba ring, and  $\pi(D_1)$, $\pi(D_2)$  be the associated locally analytic representations of $\GL_2(\Q_p)$ via the $p$-adic local Langlands correspondence. We describe the relation between $\pi(D_1)$ and $\pi(D_2)$. 
	\end{abstract}
\tableofcontents

	\section{Introduction and notation}
	Let $E$ be a finite extension of $\Q_p$, $\cR_E$ be the Robba ring of $\Q_p$ with $E$-coefficients. 
	Let $D$ be an indecomposable $(\varphi, \Gamma)$-module of rank $2$ over $\cR_E$. By \cite[Thm.~0.1]{Colm16}, the (locally analytic) $p$-adic Langlands correspondence associates to $D$ a locally analytic representation of $\GL_2(\Q_p)$ over $E$. 
	One phenomenon on the Galois side is that the $(\varphi, \Gamma)$-module $D$ has (infinitely) many $(\varphi, \Gamma)$-submodules $D'$, including trivial ones $\{t^i D\}_{i\in \Z_{\geq 0}}$ and some non-trivial ones discussed below. In this note, we describe the relation between $\pi(D)$ and $\pi(D')$. Note that the correspondence $D \mapsto \pi(D)$ is compatible with twisiting by characters. In particular, if  $D'=t^i D$, then  $\pi(D')\cong \pi(D) \otimes_E z^i \circ \dett$ (and we ignore $\dett$ when there is no ambiguity).

	Twisting by a certain character, we can and do assume $D$ has Sen weights $(0,\alpha)$ with $\alpha\in E \setminus \Z_{<0}$. For $k\in \Z_{\geq 1}$, denote by $V_k:=\Sym^{k} E^2$ the $k$-th symmetric product of the standard representation of $\GL_2(\Q_p)$. For a locally analytic representation $V$, we use $V^*$ to denote its strong continuous dual. Let $\fc\in \text{U}(\gl_2)$ be the Casimir operator.
\begin{theorem}\label{thmintro}
	(1) Assume $\End(D)\cong E$. Assume $\alpha\neq 0$, or $\alpha=0$ and $D$ not de Rham,  then $D$ admits a unique $(\varphi, \Gamma)$-submodule $D_{(0,\alpha+k)}$ of Sen weights $(0,\alpha+k)$ and we have ($[-]$ denoting the eigenspace)
	\begin{equation*}
		\pi(D_{(0,\alpha+k)})^* \cong (\pi(D)^* \otimes_E V_k)[\fc=(\alpha+k)^2-1].
	\end{equation*}

(2) Assume $\alpha=0$ and $D$ is de Rham non-trianguline, then $(\pi(D)^*\otimes_E V_k)[\fc=k^2-1]\cong \pi(D,k)^*$ and we have an exact sequence ($\{-\}$ denoting the generalized eigenspace)
\begin{equation*}
	0 \ra \pi(D,k)^* \ra (\pi(D)^* \otimes_E V_k)\{\fc=k^2-1\} \ra \pi(D,-k)^* \ra 0
\end{equation*}
	where $\pi(D,i)$ denotes Colmez's representations in \cite{Colm18} (for $D=\Delta$ of loc. cit.).
	\end{theorem}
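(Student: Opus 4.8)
The plan is to read off the structure of $(\pi(D)^*\otimes_E V_k)\{\fc=k^2-1\}$ from the translation-functor formalism and then match the two resulting constituents with Colmez's $\pi(D,k)$ and $\pi(D,-k)$. Since here $D$ has Sen weights $(0,0)$, the Casimir $\fc$ acts on $\pi(D)^*$ by $0^2-1=-1$; tensoring with $V_k=\Sym^kE^2$, whose $\gl_2$-weights are $\{k,k-2,\dots,-k\}$, and decomposing by generalized $\fc$-eigenspaces, the eigenvalues that occur are $j^2-1$ for $j\in\{-k,-k+2,\dots,k\}$, and the extreme value $k^2-1$ is attained exactly at $j=k$ and at $j=-k$, each with multiplicity one. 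So $(\pi(D)^*\otimes_E V_k)\{\fc=k^2-1\}$ has a length-two structure, with a ``$j=k$'' part and a ``$j=-k$'' part. For $\alpha\neq0$ these would sit in the distinct eigenspaces for $(\alpha+k)^2-1$ and $(\alpha-k)^2-1$, and part~(1) identifies the first with $\pi(D_{(0,\alpha+k)})^*$; the content of part~(2) is that at $\alpha=0$ the two eigenvalues collide and the two constituents get glued, non-split, into one generalized eigenspace, this gluing being precisely what the de Rham non-trianguline hypothesis produces.

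First I would construct the embedding $\pi(D,k)^*\hookrightarrow(\pi(D)^*\otimes_E V_k)[\fc=k^2-1]$. Even with $\alpha=0$ and $D$ de Rham, the chain $t^kD\subset D'\subset D$ still admits solutions $D'$ of Sen weights $(0,k)$, only no longer unique: they are parametrised by the lines in $D_{\dR}$. For each such $D'$ the inclusion $D'\hookrightarrow D$ induces, through the $\boxtimes\,\bP^1$-construction (with normalisations as in part~(1)), a $\GL_2(\Q_p)\times\text{U}(\gl_2)$-equivariant map $\pi(D')^*\to\pi(D)^*\otimes_EV_k$ whose image lands in the $\fc=k^2-1$ eigenspace by compatibility of Sen weights with the infinitesimal character. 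Colmez's $\pi(D,k)$ of \cite{Colm18} is the natural representation assembled from this $\bP^1$-family of $\pi(D')$'s, so these maps fit together into $\pi(D,k)^*\to(\pi(D)^*\otimes_EV_k)[\fc=k^2-1]$; I would prove it an isomorphism by running the part~(1) argument over the family (the same multiplicity-one and irreducibility inputs on the $\pi(D')^*$), together with the observation that the ``$j=-k$'' part cannot reach the honest eigenspace, being glued to the ``$j=k$'' part by a genuinely non-split Jordan block of $\fc$.

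Next I would identify the quotient to obtain the exact sequence. Enlarging $D$ inside $t^{-k}D$ produces rank-$2$ ``supermodules'' $\widetilde D$ with $D\subset\widetilde D\subset t^{-k}D$ of Sen weights $(0,-k)$, and Colmez's $\pi(D,-k)$ is the representation attached to this side; dualising the change-of-weight embedding for $D\subset\widetilde D$ (which places a twist of $\pi(D)^*$ inside $\pi(\widetilde D)^*\otimes_EV_k$) gives a surjection $(\pi(D)^*\otimes_EV_k)\{\fc=k^2-1\}\twoheadrightarrow\pi(D,-k)^*$ killing the ``$j=k$'' part. Combined with the previous step this yields exactness of
\begin{equation*}
0 \ra \pi(D,k)^* \ra (\pi(D)^* \otimes_E V_k)\{\fc=k^2-1\} \ra \pi(D,-k)^* \ra 0,
\end{equation*}
once one checks, by a Jordan--H\"older count, that the generalized eigenspace has no constituents beyond those coming from $j=k$ and $j=-k$, that the kernel of the surjection is exactly the honest eigenspace $\pi(D,k)^*$, and that the remaining factors exhaust those of $\pi(D,-k)^*$; the $\boxtimes\,\bP^1$-exact sequences of $0\to t^kD\to D'\to D'/t^kD\to0$ and $0\to D\to\widetilde D\to\widetilde D/D\to0$, whose $t$-torsion cokernels contribute only finite-dimensional algebraic pieces, are what make this count effective.

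The real obstacle I expect is this last step: pinning down the generalized eigenspace on the nose — both the surjectivity onto $\pi(D,-k)^*$ (that no Jordan--H\"older factor escapes beyond those of $\pi(D,k)^*$ and $\pi(D,-k)^*$) and the non-splitness of the extension. This forces one to import the fine internal structure of $\pi(D)$, $\pi(D,k)$ and $\pi(D,-k)$ from \cite{Colm18}, in particular the way $D_{\dR}$ and its Hodge filtration single out $\pi(D,k)$ among the various $\pi(D')$, and to use the non-trianguline hypothesis, which is exactly what stops $(\pi(D)^*\otimes_EV_k)\{\fc=k^2-1\}$ from splitting as $\pi(D,k)^*\oplus\pi(D,-k)^*$ and so separates part~(2) from the trianguline case. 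A conceivable alternative is to degenerate a generic non-de-Rham family $D_t$ to $D_0=D$, apply part~(1) to each $D_t$, and pass to the limit, the two merging eigenspaces yielding the generalized eigenspace and the exact sequence by flatness; but identifying the limit with Colmez's de-Rham-specific $\pi(D,\pm k)$ still needs the same \cite{Colm18} input.
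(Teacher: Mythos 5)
Your first paragraph gets the combinatorics right (the eigenvalues $j^2-1$, $j\in\{k,k-2,\dots,-k\}$, with the two extreme constituents $j=\pm k$ colliding when $\alpha=0$), and the overall translation-functor framework is the paper's. But the execution has a genuine gap, and it is exactly at the point you flag as "the real obstacle." The paper's proof does not work on the representation side at all in the way you propose; it first computes the translation on the Galois side in the de Rham weight-$(0,0)$ case and finds that it produces \emph{no} nontrivial submodules: $(D\otimes_E V_k)[\fc=k^2-1]\cong D$ itself (with a shifted $\gl_2$-structure) and $(D\otimes_E V_k)\{\fc=k^2-1\}\cong D\oplus t^kD$ as $(\varphi,\Gamma)$-modules. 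Combined with the compatibility of translation with the $\boxtimes\,\bP^1$-construction and the uniqueness of compatible involutions, this identifies $((D\boxtimes_{\delta_D}\bP^1(\Q_p))\otimes_E V_k)[\fc=k^2-1]$ with Colmez's sheaf $D\boxtimes_{z^k\delta_D}\bP^1(\Q_p)$ and the generalized eigenspace with an extension of $t^kD\boxtimes_{z^k\delta_D}\bP^1(\Q_p)$ by it; the exact sequence for $\pi(D,\pm k)^*$ then falls out of Colmez's already-established exact sequences for these two sheaves. Your substitute — assembling a map $\pi(D,k)^*\to(\pi(D)^*\otimes_E V_k)[\fc=k^2-1]$ from the family of submodules $D'\subset D$ of Sen weights $(0,k)$ — does not work as described: the $\fc=k^2-1$ eigenspace of the translated module is \emph{not} any $D_{k,\sL}$, and $\pi(D,k)^*$ strictly contains every $\pi(D')^*$ (each with quotient $(\pi_{\infty}(D)\otimes_E V_k)^*$), so "fitting together" the maps from the $\pi(D')^*$ requires proving both that they glue and that their sum exhausts the eigenspace; neither is addressed, and "running the part (1) argument over the family" cannot be done because part (1)'s mechanism ($(D\otimes_E V_k)[\fc=(\alpha+k)^2-1]\cong D_{(0,\alpha+k)}$) is precisely what fails when $\alpha=0$ and $D$ is de Rham.

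Two further points. First, part (1) is not proved: you invoke it as an input ("part (1) identifies the first with $\pi(D_{(0,\alpha+k)})^*$"), but it is half of the statement; the paper proves it by the Galois-side computation $(D\otimes_E V_k)[\fc=(\alpha+k)^2-1]\cong D_{(0,\alpha+k)}$, transport through the sheaf, and a finite-dimensionality argument à la Colmez to see that the sub of the resulting $\bP^1$-sections is exactly $\pi(D_{(0,\alpha+k)})^*$. Second, your reading of the role of the non-trianguline hypothesis is off: it is not what "glues" the two constituents or prevents splitting (the same exact sequence holds in the trianguline case, with suitably defined $\pi(D,i)$, cf. the paper's remarks); it is only needed so that Colmez's $\pi(D,\pm k)$ are defined and his exact sequences for $D\boxtimes_{z^k\delta_D}\bP^1(\Q_p)$ and $t^kD\boxtimes_{z^k\delta_D}\bP^1(\Q_p)$ are available. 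The surjection onto $\pi(D,-k)^*$ likewise comes from the quotient $t^kD\boxtimes_{z^k\delta_D}\bP^1(\Q_p)$, whose sub is $\pi(D,-k)^*\otimes_E\delta_D$, not from "supermodules" $D\subset\widetilde D\subset t^{-k}D$; your degeneration-in-families alternative is not pursued in the paper and would still need all of the above to identify the limit.
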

\begin{remark}
(1) Some parts  of Theorem \ref{thmintro} (1) in trianguline case  were obtained in \cite[Thm.~5.2.11]{JLS21}.

(2)	A similar statement in Theorem (2) also holds in trianguline case, see Remark \ref{rem36} (3).

(3) Suppose we are in the case (2), and let $\pi_{\infty}(D)$ be the smooth representation of $\GL_2(\Q_p)$ associated to $D$ via the classical local Langlands correspondence. By \cite[Thm.~0.6 (iii)]{Colm18}, for any $(\varphi, \Gamma)$-submodule $D'$ of $D$ of Sen weights $(0,k)$, we have
\begin{equation*}
	0 \ra \pi(D')^* \ra \pi(D,k)^* \ra (\pi_{\infty}(D) \otimes_E V_k)^* \ra 0.
\end{equation*}
And the map $D' \ra \pi(D')^*$  gives a one-to-one correspondence between the  $(\varphi, \Gamma)$-submodules of $D$ of Sen weights $(0,k)$ and the subrepresentations of $\pi(D,k)^*$ of quotient $(\pi_{\infty}(D) \otimes_EV_k)^*$.

(4) Assume $D$ is not trianguline, the theorem allows to reconstruct Colmez's magical operator $\partial$ in \cite[Thm.~0.8]{Colm18} and generalize it to the general (irreducible) setting. Indeed, when $D$ is as in Theorem \ref{thmintro} (2), the composition (where the second map is induced by the map $V_k \ra E$, $\sum_{i=0}^k a_i e_1^i \otimes e_2^{k-i} \mapsto a_0$)
\begin{equation*}
(\pi(D,k)^* \otimes_E V_k)[\fc=k^2-1] \hooklongrightarrow \pi(D)^* \otimes_EV_k \twoheadlongrightarrow \pi(D)^*
\end{equation*}
is an isomorphism of topological vector spaces. The $\GL_2(\Q_p)$-action on $(\pi(D)^* \otimes_E V_k)[\fc=k^2-1]$ induces then a twisted $\GL_2(\Q_p)$-action on the space $\pi(D)^*$, and gives Colmez's formulas in the construction of $\pi(D,k)^*$. See \S~\ref{seccomp} for more details.
\end{remark}
Recall that a key ingredient in the construction of $\pi(D)$ is a delicate involution $w_D$ on $D^{\psi=0}$. When $D$ is irreducible, $w_D$ was obtained by continuously extending an involution on $(D^{\inte})^{\psi=0}$, where $D^{\inte}$ is the associated $(\varphi, \Gamma)$-module over $\cR_E^{\inte}:=B_{\Q_p}^{\dagger} \otimes_{\Q_p} E$. Let $D'\subset D$ be a submodule of weight $(0,\alpha+k)$. Then $\nabla_k:=(\nabla-k+1) \cdots (\nabla-1)\nabla D' \subset t^kD$, and we denote by $\frac{\nabla_k}{t^k}: D' \ra D$ the map sending $x$ to $t^{-k} \nabla_k(x)$. The involutions $w_D$ and $w_{D'}$ have the following simple relation (though they are in general not comparable when restricted to $D^{\inte}$ and $(D')^{\inte}$):
\begin{corollary}
	We have $w_{D'}=w_D \circ \frac{\nabla_k}{t^k}$.
\end{corollary}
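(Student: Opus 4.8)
The plan is to deduce the identity from the $\GL_2(\Q_p)$-equivariant isomorphism of Theorem~\ref{thmintro}, which I denote $\iota$, by tracking the Weyl element $w=\bigl(\begin{smallmatrix}0&1\\1&0\end{smallmatrix}\bigr)$. Recall that $\pi(D)^{*}$ and $\pi(D')^{*}$ are Colmez's $D\boxtimes\mathbb{P}^{1}$ and $D'\boxtimes\mathbb{P}^{1}$, each obtained by gluing two copies of the underlying $(\varphi,\Gamma)$-module along its $\psi=0$ part via the involution $w_{D}$, resp.\ $w_{D'}$; equivalently, for $z\in D^{\psi=0}$ viewed inside $D\boxtimes\Z_p\subset D\boxtimes\mathbb{P}^1$ one has $w_{D}(z)=\Res_{\Z_p}(w\cdot z)$, and likewise for $D'$. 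Thus the corollary is a statement about how $\iota$ intertwines the two Weyl elements, and it suffices to understand $\iota$ on the mirabolic-stable pieces $D'\boxtimes\Z_p$, $D\boxtimes\Z_p$ and then apply $w$.

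First I would recall, from the proof of Theorem~\ref{thmintro}, that $\iota$ arises by applying the $\boxtimes\mathbb{P}^{1}$ construction to an embedding of $(\varphi,\Gamma)$-modules $\iota_{0}\colon D'\hookrightarrow D\otimes_{E}V_{k}$ onto the Casimir eigenspace, matching the Sen weights $(0,\alpha+k)$ of $D'$ with two of the graded pieces of $D\otimes_E V_k$ for a filtration by subrepresentations of the upper-triangular Borel $B$, and such that the composite with the $B$-equivariant quotient $V_{k}\twoheadrightarrow E$, $\sum_{i}a_{i}e_{1}^{i}e_{2}^{k-i}\mapsto a_{0}$, is exactly $\tfrac{\nabla_{k}}{t^{k}}\colon D'\to D$ — the ``magical operator'' of part (4) of the remark following Theorem~\ref{thmintro}. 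Since $\nabla$ commutes with $\varphi$, $\psi$ and $\Gamma$ while $\varphi(t)=pt$ and $\gamma(t)=\chi(\gamma)t$, the operator $\tfrac{\nabla_k}{t^k}$ commutes with $\varphi\psi$ and intertwines the $\varphi$-, $\psi$- and $\Gamma$-actions up to the evident scalars; in particular it carries $(D')^{\psi=0}$ into $D^{\psi=0}$ and is mirabolic-equivariant up to the expected weight-$k$ character. Hence $\iota$ restricted to $D'\boxtimes\Z_p$ is this map into $(D\otimes_{E}V_{k})\boxtimes\Z_p\subset\pi(D)^{*}\otimes_{E}V_{k}$.

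Next I would run the $w$-compatibility. As $w$ interchanges $e_{1}$ and $e_{2}$ it swaps the extremal weight lines $Ee_{1}^{k}$, $Ee_{2}^{k}$ of $V_{k}$, so the composite $\pi(D)^{*}\otimes_{E}V_{k}\xrightarrow{\,w\,}\pi(D)^{*}\otimes_{E}V_{k}\twoheadrightarrow\pi(D)^{*}$ (second arrow the projection above) is the ``opposite'' projection, built from the $w\Z_p$-chart of $D\boxtimes\mathbb{P}^1$ and the $e_{1}^{k}$-coefficient. Feeding in the previous paragraph for both $D$ and $D'$ and chasing $z\in(D')^{\psi=0}$ — i.e.\ the pair $(z,w_{D'}(z))\in D'\boxtimes\mathbb{P}^{1}$ — through the relation $\iota(w\cdot z)=w\cdot\iota(z)$, the $\mathbb{P}^1$-gluings on the two sides force the stated identity. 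The crucial point is the gluing of the target $\pi(D)^{*}\otimes_{E}V_{k}\cong(D\otimes_{E}V_{k})\boxtimes\mathbb{P}^{1}$: passing from the $\Z_p$-chart to the $w\Z_p$-chart relabels the $V_{k}$-components by $i\leftrightarrow k-i$ and twists the $D$-factor by $w_{D}$, so the associated involution $w_{D\otimes_{E}V_{k}}$, written in coordinates, couples the $e_{2}^{k}$-component in one chart with the $e_{1}^{k}$-component in the other, the mismatch between the two natural trivializations of the twisting sheaf $\mathcal{O}(k)$ being implemented precisely by $\tfrac{\nabla_k}{t^k}$ (and its formal inverse on the opposite piece). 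This yields $w_{D'}=w_{D}\circ\tfrac{\nabla_k}{t^k}$, and simultaneously forces $\tfrac{\nabla_k}{t^k}$ to be injective on $(D')^{\psi=0}$ and $w_{D}\circ\tfrac{\nabla_k}{t^k}$ to take values in $(D')^{\psi=0}$. Since $w_{D}$ and $w_{D'}$ are the continuous extensions of involutions on $(D^{\inte})^{\psi=0}$ and $((D')^{\inte})^{\psi=0}$ and $\tfrac{\nabla_k}{t^k}$ is continuous, it is enough to check the identity on a dense subspace. The main obstacle is this middle step — determining $w_{D\otimes_{E}V_{k}}$, i.e.\ precisely how the two trivializations of the $V_{k}$-twist are related along $\Z_p^{*}$ and where $\tfrac{\nabla_k}{t^k}$ is produced, which uses the non-semisimplicity of $V_{k}$ over $B$; this is in essence already carried out in proving Theorem~\ref{thmintro} and in \S\ref{seccomp}, so that in practice the corollary amounts to reading off the $w$-compatibility built into the construction of $\iota$. (Alternatively, one may match Colmez's integral formula for the action of $w$ on $\pi(D)^{*}$ directly against the explicit description of $\iota$.)
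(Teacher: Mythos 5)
Your overall strategy coincides with the paper's: identify $D'$ with the Casimir eigenspace $(D\otimes_E V_k)[\fc=(\alpha+k)^2-1]$ compatibly with the $\GL_2(\Q_p)$-sheaf structure, observe that the involution on $(D\otimes_E V_k)\boxtimes \Z_p^{\times}$ is the diagonal one $w_{D,k}=w_D\otimes w$ (this is what Corollary \ref{faistran} plus the uniqueness of compatible involutions gives), and read off the identity by comparing extremal $V_k$-coefficients after applying $w$. But fix one indexing confusion: the composite of $\iota_0$ with the $a_0$-projection (the coefficient of the lowest weight vector $e$, i.e.\ the map (\ref{projna})) is the \emph{inclusion} $D'\hookrightarrow D$, not $\frac{\nabla_k}{t^k}$; the operator $\frac{\nabla_k}{t^k}$ is the \emph{opposite} extremal coefficient $v_0\mapsto v_k$, and it is produced by the eigenspace equation ($v_i=\partial^i v_0$ with $\partial=\nabla/t$, cf.\ (\ref{inf0})), not by any ``mismatch of trivializations'' in the gluing. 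The gluing datum really is just $w_D\otimes w$; since $w$ swaps $e$ and $t^k e$, the $e$-coefficient of $w_{D,k}(v)$ is $w_D(v_k)=w_D\bigl(\frac{\nabla_k}{t^k}(v_0)\bigr)$, and that one line is the whole computation (the paper does exactly this for $k=1$ and inducts).

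There is also a genuine gap: when $\alpha=0$ and $D$ is de Rham of weights $(0,0)$, the eigenspace $(D\otimes_E V_k)[\fc=k^2-1]$ is $D$ itself (Proposition \ref{transProp} (4)), not $D'$; the submodules of weights $(0,k)$ are the $D_{k,\sL}$, parametrized by lines $\sL\subset D_{\dR}(D)$, and none of them is cut out by the Casimir, so your $\iota_0$ does not exist. In that case the eigenspace argument only yields $w_{D,k}=w_D\circ\frac{\nabla_k}{t^k}$ as an involution on $D^{\psi=0}$ making $(D,z^k\delta_D,w_{D,k})$ compatible; to pass from $w_{D,k}$ to $w_{D'}$ one needs the extra input $w_{D'}=w_{D,k}|_{(D')^{\psi=0}}$, which the paper quotes from \cite[Prop.~2.4, Rem.~2.5]{Colm18}. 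As written, your proof covers only the cases where $D'=D_{(0,\alpha+k)}$ is the unique submodule of its Sen weights.
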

We give a sketch of the proof of Theorem \ref{thmintro}. The key ingredient is an operation, that we call \textit{translation}, on $(\varphi, \Gamma)$-modules. The $\begin{pmatrix} \Z_p\setminus \{0\} & \Z_p \\ 0 & 1\end{pmatrix}$-action on $V_k$ induces an $\cR_E^+$-module structure on $V_k$ together with a semi-linear $(\varphi, \Gamma)$-action. In fact, we have $V_k\cong \cR_E^+/X^{k+1}$. For a $(\varphi, \Gamma)$-module $D$ over $\cR_E$, consider $D \otimes_E V_k$, equipped with the diagonal $\cR_E^+$-action and $(\varphi, \Gamma)$-action (noting $\cR_E^+$ has a natural coalgebra structure). One shows that the $\cR_E^+$-action on $D \otimes_E V_k$ uniquely extends to an $\cR_E$-action. In particular $D \otimes_EV_k$ is also a $(\varphi, \Gamma)$-module over $\cR_E$.

 Now let $D$ be as in Theorem \ref{thmintro}, then $D$ is naturally equipped with a $\gl_2$-action. We equip $D \otimes_EV_k$  with a diagonal $\gl_2$-action. The Casimir $\fc$ turns out to be an endomorphism of $(\varphi, \Gamma)$-modules of  $D \otimes_E V_k$. In particular, we can decompose $D\otimes_E V_k$ into generalized eigenspaces of $\fc$, which are $(\varphi, \Gamma)$-submodules over $\cR_E$. We study the decomposition in  \S~\ref{sec20}. For example, we show that if $D$ is as in Theorem \ref{thmintro} (1),  then  $D_{(0,\alpha+k)}\cong (D \otimes_EV_k)[\fc=(\alpha+k)^2-1]$. 
 
By \cite[Thm.~0.1]{Colm16},  there is a unique $\GL_2(\Q_p)$-sheaf over $\bP^1(\Q_p)$ of central character $\delta_D$ (which satisfies $\delta_D\varepsilon=\wedge^2 D$, $\varepsilon$ being the cyclotomic character), associated to $D$, whose global sections  $D \boxtimes_{\delta_D} \bP^1(\Q_p)$ sit in an exact sequence
 \begin{equation}\label{intro0}
 0 \ra \pi(D)^* \otimes_E \delta_D \ra D \boxtimes_{\delta_D} \bP^1(\Q_p) \ra \pi(D) \ra 0.
 \end{equation}
 It turns out that this construction is quite compatible with translations. 
Namely, to $D\otimes_E V_k$, one can naturally associate a $\GL_2(\Q_p)$-sheaf over $\bP^1(\Q_p)$ (of central character $\delta_D z^k$) whose global sections $(D\otimes_E V_k) \boxtimes_{\delta_Dz^k} \bP^1(\Q_p)$ are $\GL_2(\Q_p)$-equivariantly isomorphic to $(D \boxtimes_{\delta_D} \bP^1(\Q_p)) \otimes_E V_k$. Suppose $D$ is as in Theorem \ref{thmintro} (1), we then have (noting $\delta_D z^k=\delta_{D_{(0,\alpha+k)}}$)
\begin{multline}\label{intro1}
((D \boxtimes_{\delta_D} \bP^1(\Q_p)) \otimes_E V_k)[\fc=(\alpha+k)^2-1] \cong ((D\otimes_E V_k)[\fc=(\alpha+k)^2-1]) \boxtimes_{\delta_Dz^k} \bP^1(\Q_p)\\
\cong D_{(0,\alpha+k)} \boxtimes_{\delta_{D_{(0,\alpha+k)}}} \bP^1(\Q_p).
\end{multline}
Using the isomorphism and  (\ref{intro0}), one can deduce Theorem \ref{thmintro} (1). Theorem \ref{thmintro} (2) follows by similar arguments. A main difference is that in this case, the translations can only produce  $(\varphi, \Gamma)$-submodules  $t^i D$. For example, $(D \otimes_EV_k)[\fc=k^2-1]\cong D$ (noting the $\gl_2$-actions are however different), and $(D\otimes_E V_k)\{\fc=k^2-1\}\cong D \oplus t^k D$ (again, just as $(\varphi, \Gamma)$-module). 
 Similarly as in (\ref{intro1}), we deduce
\begin{equation*}
((D \boxtimes_{\delta_D} \bP^1(\Q_p)) \otimes_E V_k)[\fc=k^2-1] \cong D \boxtimes_{\delta_Dz^k} \bP^1(\Q_p)
\end{equation*}
and an exact sequence
\begin{equation*}
	0 \ra D \boxtimes_{\delta_Dz^k} \bP^1(\Q_p) \ra ((D \boxtimes_{\delta_D} \bP^1(\Q_p)) \otimes_E V_k)\{\fc=k^2-1\} \ra t^kD \boxtimes_{\delta_Dz^k} \bP^1(\Q_p) \ra 0.
\end{equation*}
Theorem \ref{thmintro} (2) follows then from these  together with results in \cite{Colm18}. We refer to the context for details.

\subsection*{Notation}	
Let $\varepsilon$ be the cyclotomic character of $\Gal_{\Q_p}$ and of $\Q_p^{\times}$.

\noindent We use the following notation for the Lie algebra $\gl_2$ of $\GL_2(\Q_p)$: $\fh:=\begin{pmatrix}
1 & 0 \\ 0 & -1
\end{pmatrix}$, $a^+:=\begin{pmatrix}
1 & 0 \\ 0 & 0
\end{pmatrix}$, $a^-:=\begin{pmatrix}
0 & 0 \\ 0 & 1 \end{pmatrix}$, $u^+:=\begin{pmatrix}
0 & 1 \\ 0 & 0
\end{pmatrix}$, $u^-:=\begin{pmatrix} 0 & 0 \\ 0 & 1
\end{pmatrix}$, $\fz:=a^++a^-$, and $\fc:=\fh^2-2\fh+4u^+u^-=\fh^2+2\fh+4u^-u^+\in \text{U}(\gl_2)$ be  the Casimir element.
	
\noindent Let $\cR_E$ be the $E$-coefficient Robba ring of $\Q_p$, and $\cR_E^+:=\{f=\sum_{n=0}^{+\infty} a_n X^n\ |\ f\in \cR_E\}$. Note $\cR_E^+$ is naturally isomorphic to the distribution algebra $\cD(\Z_p,E)$ on $\Z_p$. Let $t=\log(1+X)\in \cR_E^+\subset \cR_E$.  

\noindent We use $\bullet -- \bullet$ (resp. $\bullet-\bullet$) to denote a possibly split extension (resp. a non-split extension), with the left object the sub and the right object the quotient.

\subsection*{Acknowledgement}	I thank Zicheng Qian, Liang Xiao for helpful discussions. The work is supported by the NSFC Grant No. 8200800065 and No. 8200907289.

	\section{Translations of 
		$(\varphi, \Gamma)$-modules}
 We discuss some properties of  translations on $(\varphi, \Gamma)$-modules. 
	\subsection{Generalities}
	Let $k\in \Z_{\geq 0}$, let $V_k:=\Sym^k E^2$ be the algebraic representation of $\GL_2(\Q_p)$ of highest weight $(0,k)$ (with respect to $B(\Q_p)$). On $V_k$, we have $\fz=k$ and $\fc=k(k+2)$. The $P^+:=\begin{pmatrix} \Z_p\setminus \{0\} & \Z_p \\ 0 & 1\end{pmatrix}$-action induces   an $\cR_E^+$-structure on $V_k$ together with a semi-linear $(\varphi, \Gamma)$-action given by $(1+X) v=\begin{pmatrix} 1 & 1 \\ 0 & 1 \end{pmatrix} v$, $\varphi(v)=\begin{pmatrix} p & 0 \\ 0 & 1 \end{pmatrix}v$, $\gamma(v)=\begin{pmatrix}
		 \gamma & 0 \\ 0 & 1 
	\end{pmatrix} v$. Let $e$ be the lowest weight vector of $V_k$, then we have a $(\varphi, \Gamma)$-equivariant isomorphism of $\cR_E^+$-modules: $ \cR_E^+/X^{k+1} \xrightarrow{\sim} V_k$, $\alpha \mapsto \alpha e$. 
	
Let $D$ be a $(\varphi, \Gamma)$-module over $\cR_E$. Consider $D \otimes_E V_k$. We equip $D \otimes_E V_k$ with a diagonal $\cR_E^+$-action (using the coalgebra structure  $\cR_E^+ \ra \cR_E^+ \otimes_E \cR_E^+$, $(1+X) \mapsto (1+X) \otimes (1+X)$), and with a diagonal $(\varphi, \Gamma)$-action. It is clear that the resulting $(\varphi, \Gamma)$-action is $\cR_E^+$-semi-linear. We also equip $D \otimes_E V_k$ with the natural topology so that $D\otimes_E V_k \cong D^{\oplus k+1}$ as topological $E$-vector space. 	
	
\begin{proposition}
	The $\cR_E^+$-action on $D\otimes_E V_k$ uniquely extends to a continuous $\cR_E$-action. With this action,  $D\otimes_E V_k$ is a  $(\varphi, \Gamma)$-module over $\cR_E$ and is isomorphic to a successive extension: $t^k D -- t^{k-1} D -- \cdots -- tD -- D$.
\end{proposition}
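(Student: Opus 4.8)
The plan is to make the extension of the action completely explicit rather than purely formal. Write $V_k\cong\cR_E^+/X^{k+1}$, and let $\bar X$ denote the image of $X$, a nilpotent element with $\bar X^{k+1}=0$. Then $D\otimes_EV_k\cong D\otimes_E(\cR_E^+/X^{k+1})$, and the diagonal $\cR_E^+$-action factors as $\cR_E^+\xrightarrow{\delta}\cR_E^+\otimes_E(\cR_E^+/X^{k+1})\to\End_E\big(D\otimes_E(\cR_E^+/X^{k+1})\big)$, where $\delta=(\mathrm{id}\otimes\mathrm{pr})\circ\Delta$ for $\Delta$ the comultiplication $(1+X)\mapsto(1+X)\otimes(1+X)$, and $\cR_E^+\otimes_E(\cR_E^+/X^{k+1})$ acts through the given $\cR_E^+$-structure on the first factor and by multiplication on the second. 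Since $\Delta(1+X)=(1+X)\otimes(1+X)$, we get $\delta(X)=(X\otimes\bar 1)(1\otimes\overline{1+X})+(1\otimes\bar X)$, a unit times a unipotent plus a nilpotent; because $X$ is a unit in $\cR_E$, it follows that $\delta(X)$ is a unit in $\cR_E\otimes_E(\cR_E^+/X^{k+1})$. Hence the operator ``multiplication by $X$'' on $D\otimes_EV_k$ is invertible.

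Next I would write down the extension of $\delta$ directly. Taylor-expanding $\delta(f)=f\big(X+\bar X(1+X)\big)$ in the nilpotent $\bar X$ gives, for $f\in\cR_E^+$,
\[\delta(f)=\sum_{j=0}^{k}\frac{f^{(j)}(1+X)^{j}}{j!}\otimes\overline{X^{j}}.\]
The right-hand side makes sense for every $f\in\cR_E$ (as $\cR_E$ is stable under $d/dX$) and defines a continuous $E$-linear map $\bar\delta\colon\cR_E\to\cR_E\otimes_E(\cR_E^+/X^{k+1})$; a one-line computation with the Leibniz rule shows $\bar\delta(fg)=\bar\delta(f)\bar\delta(g)$, so $\bar\delta$ is a continuous $E$-algebra homomorphism extending $\delta$. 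Transporting back through $\cR_E^+/X^{k+1}\cong V_k$, the formula $f\cdot(d\otimes v):=\sum_{j}\big(\tfrac1{j!}f^{(j)}(1+X)^{j}d\big)\otimes(X^{j}v)$ defines a continuous $\cR_E$-module structure on $D\otimes_EV_k$ extending the diagonal $\cR_E^+$-action. For uniqueness: any continuous extension must send $X^{-1}$ to the inverse of multiplication by $X$, which is forced by the first paragraph, so the extension is determined on $\cR_E^+[X^{-1}]\supseteq E[X,X^{-1}]$; as the latter is dense in $\cR_E$ and $D\otimes_EV_k$ is Hausdorff, continuity makes the extension unique.

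Finally I would exhibit the successive extension. The $X$-adic filtration $X^iV_k$ of $V_k$ is by $(\varphi,\Gamma)$-stable $\cR_E^+$-submodules, and the explicit formula above shows each $D\otimes_EX^iV_k$ is an $\cR_E$-submodule of $D\otimes_EV_k$. On the graded piece $D\otimes_E(X^iV_k/X^{i+1}V_k)$ the new $\cR_E$-action reduces to the original one on $D$ (only the $j=0$ term survives), while $\varphi$ is scaled by $p^{i}$ and $\gamma$ by $\gamma^{i}$ because $\varphi(X)=pX\cdot(\text{unit})$; hence $\mathrm{gr}^{i}(D\otimes_EV_k)\cong t^{i}D$ as $(\varphi,\Gamma)$-modules over $\cR_E$, and $D\otimes_EV_k$ is the iterated extension $t^kD -- t^{k-1}D -- \cdots -- tD -- D$ (with $t^kD$ the sub). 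In particular $D\otimes_EV_k$ is an iterated extension of copies of $D$, hence finite free of rank $(k+1)\operatorname{rank}D$ over $\cR_E$ (extensions of free by free split as $\cR_E$-modules), and $\varphi$ on it is étale by a five-lemma dévissage from the $t^iD$'s; so $D\otimes_EV_k$ is a $(\varphi,\Gamma)$-module over $\cR_E$.

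The main obstacle is the single non-formal ingredient: realizing that, although $\cR_E$ carries no compatible coalgebra structure, the diagonal multiplication by $X$ nonetheless becomes invertible over $\cR_E$ (because $X\in\cR_E^\times$ while $\bar X$ acts nilpotently on $V_k$), and producing the closed formula for $\bar\delta$ together with the Leibniz verification that it is an algebra map. Granting that, uniqueness, the identification of the graded pieces, and the $(\varphi,\Gamma)$-module property are all routine dévissage.
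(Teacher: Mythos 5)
Your proof is correct, and it takes a more direct route than the paper's. The paper first treats $k=1$ by hand (computing $X$, $X^{-1}$ and $f(X^{-1})$ explicitly on $D\otimes_E V_1$ — note its formula involving $f'(X^{-1})$ is exactly the $k=1$ instance of your Taylor expansion), then handles general $k$ by induction on $D\otimes_E V_1^{\otimes k}$ and realizes $D\otimes_E V_k$ as a closed $\cR_E^+$-stable direct summand of it, which is therefore stable under $\cR_E^+[1/X]$ and hence under $\cR_E$. You instead write the extended action in closed form for all $k$ at once, via $\bar\delta(f)=\sum_{j=0}^{k}\tfrac{1}{j!}f^{(j)}(1+X)^j\otimes \overline{X^j}$, verified multiplicative by Leibniz; the key observation in both arguments is the same (the diagonal image of $X$ is a unit times a unipotent plus a nilpotent, hence invertible over $\cR_E$), but your version dispenses with the induction and the direct-summand step, and it makes the $\cR_E$-action on $D\otimes_E V_k$ explicit, which the paper only does for $k=1$. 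The identification of the graded pieces is essentially the same in both treatments (only the $j=0$ term survives on $\mathrm{gr}^i$, and $\varphi(X)\equiv pX$, $\gamma(X)\equiv \varepsilon(\gamma)X$ modulo $X^2$ give the twist by $t^i$), though you make the $(\varphi,\Gamma)$-twist computation explicit where the paper appeals to uniqueness of the extension of the $\cR_E^+$-action on the graded piece. Two cosmetic points: the splitting of an extension of free $\cR_E$-modules is automatic because the quotient is projective, and the word ``étale'' is out of place here since the paper's $(\varphi,\Gamma)$-modules over $\cR_E$ are not assumed étale — the relevant condition, that $\varphi$ generates the module over $\cR_E$, does follow from your dévissage.
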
	
\begin{proof}
	We first prove the proposition for the case $k=1$. Let $e_0$ be the lowest weight vector in $V_1$, and $e_1:=X e_0$ (so $X e_1=0$). For $v=v_0 \otimes e_0 +v_1 \otimes e_1$, $X v=  X v_0 \otimes e_0+(v_0+X v_0+X v_1) \otimes e_1$. It is clear that $X$ is invertible on $D \otimes_EV_1$, and $X^{-1}(v_0 \otimes e_0+v_1 \otimes e_1)= (X^{-1} v_0) \otimes e_0+(-X^{-1}v_0-X^{-2}v_0+X^{-1}v_1) \otimes e_1$. For $f(T)\in E[T]$, $f(X^{-1})(v_0\otimes e_0+v_1 \otimes e_1)=(f(X^{-1}) v_0) \otimes e_0+(-(X^{-2}+X^{-1})f'(X^{-1}) v_0+f(X^{-1}) v_1)\otimes e_1 $. As $\cR_E$ acts on $D$, by the formula we see $\cR_E^+[1/X]$-action uniquely extends to a continuous $\cR_E$-action. The proposition in this case follows. Using induction, we see the $\cR_E^+$-action on $D \otimes_EV_1^{\otimes k}$ uniquely extends to a continuous $\cR_E$-action. As $D \otimes_E V_k$ is a (closed) direct summand of $D \otimes_E V_1^{\otimes k}$ stable by  $\cR_E^+$, it is also stabilized by $\cR_E^+[1/X]$ hence by $\cR_E$.
	
	Each $D\otimes_E (X^i \cR_E^+/X^{k+1})$, for $i=0,\cdots, k-1$, is clearly a $(\varphi, \Gamma)$-equivariant $\cR_E^+$-submodule. Using induction and the easy fact that $X$ is invertible on the graded pieces $D \otimes_E(X^j \cR_E^+/X^{j+1})$ (noting that the $\cR_E^+$-action on the graded pieces is the same as acting only on $D$), one easily sees that $X$ is invertible on $D\otimes_E (X^i \cR_E^+/X^{k+1})$. Hence $D\otimes_E (X^i \cR_E^+/X^{k+1})$ is a $(\varphi, \Gamma)$-submodule of $D \otimes_EV_k$ over $\cR_E$. On the graded piece, the induced $\cR_E$-action is the unique one that extends the $\cR_E^+$-action, hence coincides with the $\cR_E$-action on $D$. We then easily see that $D \otimes_E (X^i \cR_E^+/X^{i+1})$ isomorphic to $t^i D$. This concludes the proof. 
\end{proof}
\begin{remark} In particular, we have the following morphisms of $(\varphi, \Gamma)$-modules over $\cR_E$:
	\begin{eqnarray}
		D \otimes_E V_k \twoheadlongrightarrow D, \ &&  \sum_{i=0}^{k} v_i \otimes t^i e\mapsto v_0, \label{projna} \\
	D\hooklongrightarrow D \otimes_E V_k, \  && v \mapsto v \otimes t^k e.  \label{injna}
\end{eqnarray}
\end{remark}
\begin{example}\label{ex1}
We have	$\cR_E \otimes_E V_1\cong \cR_E \oplus t \cR_E$. Indeed, the element $1 \otimes e \in H^0_{(\varphi, \Gamma)}(\cR_E \otimes_E V_1)$. This induces a morphism $\cR_E \hookrightarrow \cR_E \otimes_E V_1$, whose composition with (\ref{projna}) (for $D=\cR_E$) is clearly an isomorphism. We see the extension in the proposition for $D=\cR_E$ and $k=1$ splits. See Remark \ref{remind} (1) for a non-split case.
\end{example}

\begin{remark}
Suppose $D$ is de Rham, then $D \otimes_E V_k$ is also de Rham. This easily follows from Proposition \ref{decork1} (1) below (which is obtained by using certain $\gl_2$-action). One can also directly prove it as follows. Indeed, by induction, it is sufficient to show $D \otimes_E V_1$ is de Rham. Let $\Delta$ be the $p$-adic differential equation associated to $D$ (of constant Hodge-Tate weight $0$), and $n\in \Z_{\geq 0}$ such that $t^n \Delta \subset D$. We see $t^n \Delta \otimes_E V_1$ is a $(\varphi, \Gamma)$-submodule of $D \otimes_E V_1$, and the both have the same rank. It suffices to show $\Delta \otimes_E V_1$ is de Rham. But we have (e.g. by \cite[Lem.~1.11]{Ding4}) $H^1_g(t \Delta \otimes_{\cR_E} \Delta^{\vee}) \xrightarrow{\sim} H^1(t \Delta \otimes_{\cR_E} \Delta^{\vee})$, hence any extension of $\Delta$ by $t \Delta$ is de Rham. 
\end{remark}
\begin{lemma}\label{psi}
For $v\otimes w\in D \otimes_E V_k$, we have $\psi(v \otimes w)=\psi(v) \otimes \varphi^{-1}(w)$
\end{lemma}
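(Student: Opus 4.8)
The plan is to identify the operator $\psi$ of the $(\varphi,\Gamma)$-module $D\otimes_E V_k$ furnished by the Proposition above with the $E$-linear endomorphism $\psi_D\otimes\varphi_{V_k}^{-1}$ of $D\otimes_E V_k$, where $\psi_D$ is the operator on $D$ and $\varphi_{V_k}^{-1}$ is the inverse of $\varphi$ on $V_k$. Here $\varphi$ acts on the finite-dimensional algebraic representation $V_k$ through $\begin{pmatrix} p & 0 \\ 0 & 1\end{pmatrix}$ (equivalently, as the automorphism $X\mapsto(1+X)^p-1$ of $\cR_E^+/X^{k+1}\cong V_k$), so it is invertible and $w\mapsto\varphi^{-1}(w)$ is a well-defined $E$-linear endomorphism of $V_k$; consequently $\psi_D\otimes\varphi_{V_k}^{-1}$ is a well-defined $E$-linear endomorphism of $D\otimes_E V_k$. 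Evaluating it on a pure tensor $v\otimes w$ yields exactly the asserted formula, so it is enough to prove this identification.

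Recall that, as for any $(\varphi,\Gamma)$-module over $\cR_E$, one has the decomposition $D\otimes_E V_k=\bigoplus_{i=0}^{p-1}(1+X)^i\varphi(D\otimes_E V_k)$, and that $\psi$ is the unique $E$-linear endomorphism satisfying $\psi\circ\varphi=\id$ and $\psi\bigl((1+X)^i\varphi(n)\bigr)=0$ for all $n$ and all $1\le i\le p-1$. It therefore suffices to check these two properties for $T:=\psi_D\otimes\varphi_{V_k}^{-1}$. Since the $\varphi$- and $(1+X)$-actions on $D\otimes_E V_k$ are diagonal, one has $\varphi(v\otimes w)=\varphi(v)\otimes\varphi(w)$ and, for $1\le i\le p-1$, $(1+X)^i\varphi(v\otimes w)=\bigl((1+X)^i\varphi(v)\bigr)\otimes\bigl((1+X)^i\varphi(w)\bigr)$. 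Applying $T$ gives $T\bigl(\varphi(v\otimes w)\bigr)=\psi_D\bigl(\varphi(v)\bigr)\otimes\varphi_{V_k}^{-1}\bigl(\varphi(w)\bigr)=v\otimes w$ and $T\bigl((1+X)^i\varphi(v\otimes w)\bigr)=\psi_D\bigl((1+X)^i\varphi(v)\bigr)\otimes\varphi_{V_k}^{-1}\bigl((1+X)^i\varphi(w)\bigr)=0$, the last equality because $\psi_D\bigl((1+X)^i\varphi(v)\bigr)=0$ in $D$ for $1\le i\le p-1$. As pure tensors span $D\otimes_E V_k$ and all maps in sight are $E$-linear, these identities extend to all of $D\otimes_E V_k$, so $T=\psi$.

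I do not expect any real obstacle: the statement is a short computation once the diagonal actions are unravelled. The two points deserving a little care are that $\varphi$ is indeed invertible on $V_k$ (so that $\varphi^{-1}(w)$ makes sense), and that the above characterization of $\psi$ relies on the decomposition $D\otimes_E V_k=\bigoplus_{i=0}^{p-1}(1+X)^i\varphi(D\otimes_E V_k)$ — that is, on $D\otimes_E V_k$ being a genuine $(\varphi,\Gamma)$-module over $\cR_E$ with $\varphi$ étale, which is precisely the content of the Proposition just proved.
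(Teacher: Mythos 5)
Your proof is correct and rests on exactly the same ingredients as the paper's: the decomposition $D\otimes_E V_k=\bigoplus_{i=0}^{p-1}(1+X)^i\varphi(D\otimes_E V_k)$, the invertibility of $\varphi$ on $V_k$, and the diagonality of the $(1+X)$- and $\varphi$-actions. The paper simply runs the computation in the other direction (decomposing $v\otimes w$ from $v=\sum_i(1+X)^i\varphi(v_i)$ and reading off the $i=0$ component) rather than verifying the characterizing properties of $\psi$ for the candidate operator $\psi_D\otimes\varphi_{V_k}^{-1}$, so this is essentially the same argument.
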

\begin{proof} Write $v=\sum_{i=0}^{p-1} (1+X)^i \varphi(v_i)$ (so $\psi(v)=v_0$). We have (using $\varphi$ is invertible on $V_k$): $$v\otimes w=\sum_{i=0}^{p-1}(1+X)^i (\varphi(v_i) \otimes (1+X)^{-i} w)=\sum_{i=0}^{p-1}(1+X)^i \varphi(v_i \otimes \varphi^{-1}((1+X)^{-i}w)).$$
The lemma follows.
\end{proof}
A $(\varphi, \Gamma)$-module $D$ is naturally equipped with a locally analytic action of  $P^+$, where $\begin{pmatrix}
1 & 1 \\ 0 & 1
\end{pmatrix}$ acts via $(1+X)\in \cR_E$, $\begin{pmatrix}
p & 0 \\ 0 & 1
\end{pmatrix}$ via $\varphi$, and $\begin{pmatrix}
\Z_p^{\times} & 0 \\ 0 & 1
\end{pmatrix}$ via $\Gamma$. Moreover, by  \cite[\S~1.3]{Colm16}, $D$ corresponds to a $P^+$-sheaf $\sF_{D}$ of analytic type over $\Z_p$, with the sections $\sF_{D}(i+p^n\Z_p)$ over $i+p^n \Z_p$, which we also denote by $D\boxtimes (i+p^n\Z_p)$ as in \textit{loc. cit.},  given by
\begin{equation}\label{res1}
	(1+X)^i \varphi^n \psi^n ((1+X)^{-i} v)=:\Res_{i+p^n\Z_p}(v)
\end{equation}
for $v\in D$. In particular, we have $D\boxtimes \Z_p^{\times} \cong D^{\psi=0}$.

 For a $P^+$-sheaf $\sF$ of analytic type over $\Z_p$, it is direct to check the following data defines a $P^+$-sheaf of analytic type $\sF \otimes_E V_k$ over $\Z_p$:
\begin{itemize}
	\item $(\sF \otimes_E V_k)(U):=\sF(U) \otimes_E V_k$,
	\item $\Res^V_U|_{\sF\otimes_E V_k} := \Res^V_U|_{\sF} \otimes \id$,
	\item $g_U|_{(\sF \otimes_E V_k)(U)}:=g_U|_{\sF(U)} \otimes g: (\sF \otimes_E V_k)(U) \ra (\sF \otimes_E V_k)(g(U))\cong \sF(g(U))\otimes_E V_k$ for $g\in P^+$.
\end{itemize}
\begin{lemma}\label{transP+}The identity map on $D \otimes_E V_k$ induces a natural $P^+$-equivariant isomorphism $\sF_{D} \otimes_E V_k \cong \sF_{D \otimes_E V_k}$ .
\end{lemma}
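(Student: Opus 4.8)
The plan is to check that the two $P^+$-sheaves of analytic type over $\Z_p$ agree section-by-section under the identity map on $D\otimes_E V_k$, compatibly with restriction maps and with the $P^+$-action. Since a $P^+$-sheaf of analytic type over $\Z_p$ is determined by its sections on the basis $\{i+p^n\Z_p\}$ together with the restriction maps $\Res_{i+p^n\Z_p}$ and the $P^+$-action, and since by construction $(\sF_D\otimes_E V_k)(i+p^n\Z_p)=(D\boxtimes(i+p^n\Z_p))\otimes_E V_k$ while $\sF_{D\otimes_E V_k}(i+p^n\Z_p)=(D\otimes_E V_k)\boxtimes(i+p^n\Z_p)$, the whole claim reduces to the identity of operators
\begin{equation*}
	\Res_{i+p^n\Z_p}^{D\otimes_E V_k} = \Res_{i+p^n\Z_p}^{D}\otimes\,\id_{V_k}
\end{equation*}
on $D\otimes_E V_k$, where the left-hand side is computed using the $(\varphi,\Gamma)$-action and $\cR_E$-action on the translated module $D\otimes_E V_k$. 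The compatibility with the $P^+$-action is then automatic: on $\sF_D\otimes_E V_k$ the element $g\in P^+$ acts by $g\otimes g$ by definition, and on $\sF_{D\otimes_E V_k}$ it acts by the $P^+$-action attached to the $(\varphi,\Gamma)$-module $D\otimes_E V_k$, which is precisely the diagonal action $g\otimes g$ (that is how the translation $(\varphi,\Gamma)$-structure was defined: $(1+X)$, $\varphi$, $\gamma$ all act diagonally).

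Concretely, I would unwind formula (\ref{res1}): $\Res_{i+p^n\Z_p}(v)=(1+X)^i\varphi^n\psi^n((1+X)^{-i}v)$. Multiplication by $(1+X)$ is diagonal on $D\otimes_E V_k$ by construction of the translation module, and $\varphi$ is diagonal as well; the one point that needs care is $\psi$. But this is exactly Lemma \ref{psi}: $\psi(v\otimes w)=\psi(v)\otimes\varphi^{-1}(w)$ on $D\otimes_E V_k$. Plugging in, for $v\otimes w\in D\otimes_E V_k$,
\begin{equation*}
	\Res_{i+p^n\Z_p}(v\otimes w)=(1+X)^i\varphi^n\psi^n\bigl((1+X)^{-i}(v\otimes w)\bigr)
	=(1+X)^i\varphi^n\psi^n\bigl(((1+X)^{-i}v)\otimes((1+X)^{-i}w)\bigr),
\end{equation*}
and iterating Lemma \ref{psi} gives $\psi^n(x\otimes y)=\psi^n(x)\otimes\varphi^{-n}(y)$, so the right-hand side becomes
\begin{equation*}
	(1+X)^i\Bigl(\varphi^n\psi^n((1+X)^{-i}v)\Bigr)\otimes\Bigl(\varphi^n\varphi^{-n}((1+X)^{-i}w)\Bigr)
	=\Res_{i+p^n\Z_p}(v)\otimes w,
\end{equation*}
using that the $V_k$-factor of $(1+X)^i$-multiplication and of $\varphi^n$ cancel the $(1+X)^{-i}$ and $\varphi^{-n}$. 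This is precisely $(\Res_{i+p^n\Z_p}^D\otimes\id_{V_k})(v\otimes w)$. Extending linearly over $E$ (the sections are $E$-spans of such pure tensors) gives the section-level identity, and compatibility of these operators with the further restriction maps $\Res_{j+p^m\Z_p}^{i+p^n\Z_p}$ for $j+p^m\Z_p\subset i+p^n\Z_p$ follows formally from the corresponding compatibility for $\sF_D$ together with the fact that everything is $\id$ on the $V_k$-factor.

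The main (and really only) subtlety is the behaviour of $\psi$ under translation, i.e. making sure one uses Lemma \ref{psi} rather than naively guessing that $\psi$ is diagonal — it is not, since $\psi$ is not an $\cR_E$-module map but satisfies $\psi((1+X)x)=$ (a shift), and on the $V_k$-side this introduces the $\varphi^{-1}$ twist. Once Lemma \ref{psi} is in hand the computation above is immediate, and the remaining verifications (sheaf axioms, analytic type, $P^+$-equivariance) are all inherited from $\sF_D$ because the identity map is being used on underlying spaces and the $V_k$-factor carries the identity. I do not anticipate any further obstacle; the statement is essentially a bookkeeping lemma packaging Lemma \ref{psi} together with the definitions of $\sF_D$ and of $\sF\otimes_E V_k$.
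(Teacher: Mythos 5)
Your proposal is correct and follows essentially the same route as the paper's proof: both reduce the lemma to the identity $\Res_{i+p^n\Z_p}(v\otimes w)=\Res_{i+p^n\Z_p}(v)\otimes w$, obtained from formula (\ref{res1}) together with Lemma \ref{psi} (whose iteration supplies the $\varphi^{-n}$ twist that cancels against the diagonal $\varphi^n$), and both observe that $P^+$-equivariance is automatic because each side carries the diagonal action. Your write-up simply makes explicit the computation the paper leaves implicit.
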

\begin{proof}
Let $i\in \Z_p$ and $n\in \Z_{\geq 0}$. For $x\otimes w\in D\otimes_E V_k$, using Lemma \ref{psi} and the formula in (\ref{res1}), we have 
	\begin{equation*}
		\Res_{i+p^n\Z_p}(x \otimes w)=\Res_{i+p^n \Z_p}(x) \otimes w.
	\end{equation*}
The identity map induces then an isomorphism of sheaves on $\Z_p$: $\sF_D \otimes_E V_k \cong \sF_{D \otimes_E V_k}$. It is straightforward to check the isomorphism is $P^+$-equivariant, as the both are equipped with the diagonal $P^+$-action.
\end{proof}
The following lemma is a direct consequence of Lemma \ref{transP+}. 
\begin{lemma}
	We have $(D \otimes_E V_k)^{\psi=0}=D^{\psi=0} \otimes_E V_k$ (as subspace of $D \otimes_E V_k$). Moreover, $\Res_{\Z_p^*}|_{D \otimes_E V_k}=\Res_{\Z_p^*}|_D \otimes \id$. 
\end{lemma}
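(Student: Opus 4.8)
The plan is to deduce both assertions directly from Lemma~\ref{transP+}, as the paper indicates. Recall that for any $(\varphi,\Gamma)$-module $M$ over $\cR_E$ one has $M\boxtimes\Z_p^{\times}=\sF_M(\Z_p^{\times})\cong M^{\psi=0}$, realized inside $M$ as the image of $\Res_{\Z_p^{\times}}=\id-\varphi\psi$ (equivalently, as $\{x\in M:\psi(x)=0\}$). Applying this to $M=D\otimes_E V_k$ and using Lemma~\ref{transP+}, the section $\sF_{D\otimes_E V_k}(\Z_p^{\times})$ is, via the identity map on $D\otimes_E V_k$, identified with the section $(\sF_D\otimes_E V_k)(\Z_p^{\times})$, which by the very definition of $\sF_D\otimes_E V_k$ equals $\sF_D(\Z_p^{\times})\otimes_E V_k=(D\boxtimes\Z_p^{\times})\otimes_E V_k\cong D^{\psi=0}\otimes_E V_k$. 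Since the isomorphism of Lemma~\ref{transP+} is induced by $\id_{D\otimes_E V_k}$, this gives the equality $(D\otimes_E V_k)^{\psi=0}=D^{\psi=0}\otimes_E V_k$ \emph{as subspaces} of $D\otimes_E V_k$, which is the first claim.

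For the second claim, I would invoke the formula $\Res_{i+p^n\Z_p}(x\otimes w)=\Res_{i+p^n\Z_p}(x)\otimes w$ established inside the proof of Lemma~\ref{transP+}. Writing $\Z_p^{\times}=\bigsqcup_{i=1}^{p-1}(i+p\Z_p)$ and summing over $i$ gives $\Res_{\Z_p^{\times}}(x\otimes w)=\Res_{\Z_p^{\times}}(x)\otimes w$, and by $E$-linearity this is exactly $\Res_{\Z_p^{\times}}|_{D\otimes_E V_k}=\Res_{\Z_p^{\times}}|_D\otimes\id$. Alternatively, and essentially equivalently, one can argue straight from Lemma~\ref{psi}: since $\Res_{\Z_p^{\times}}=\id-\varphi\psi$ and $\psi(v\otimes w)=\psi(v)\otimes\varphi^{-1}(w)$ with $\varphi$ acting diagonally, one gets $\varphi\psi(v\otimes w)=\varphi\psi(v)\otimes w$, whence $\Res_{\Z_p^{\times}}(v\otimes w)=(v-\varphi\psi(v))\otimes w=\Res_{\Z_p^{\times}}(v)\otimes w$.

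I expect no serious obstacle: the statement is genuinely formal given Lemma~\ref{transP+} (or Lemma~\ref{psi}). The one point deserving a word of care is that the kernel $\{x\in D\otimes_E V_k:\psi(x)=0\}$ coincides on the nose with $D^{\psi=0}\otimes_E V_k$; to see this, expand $x=\sum_j v_j\otimes w_j$ in a basis $\{w_j\}$ of $V_k$, so that Lemma~\ref{psi} and additivity give $\psi(x)=\sum_j\psi(v_j)\otimes\varphi^{-1}(w_j)$, and since $\varphi$ is invertible on $V_k$ the family $\{\varphi^{-1}(w_j)\}$ is again a basis, forcing $\psi(v_j)=0$ for all $j$. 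This is also precisely what makes the first assertion an equality of subspaces rather than merely an abstract isomorphism.
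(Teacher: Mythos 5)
Your proof is correct and follows exactly the route the paper intends: the paper gives no written proof, simply declaring the lemma a direct consequence of Lemma \ref{transP+}, and your argument (via the sheaf identification and the formula $\psi(v\otimes w)=\psi(v)\otimes\varphi^{-1}(w)$, plus the basis argument using invertibility of $\varphi$ on $V_k$ to get the equality of subspaces on the nose) supplies precisely the omitted details.
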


\subsection{Translations of $(\varphi, \Gamma)$-modules of rank $2$}\label{sec20}
For a $(\varphi, \Gamma)$-module $D$ with an extra $\gl_2$-action, we study the  $(\varphi, \Gamma)$-module structure together with the (diagonal) $\gl_2$-action of the translation $D\otimes_E V_k$.

\subsubsection{A digression on $(\varphi, \Gamma)$-submodules} Let $D$ be a $(\varphi, \Gamma)$-module of rank $2$ over $\cR_E$. We briefly discuss the $(\varphi, \Gamma)$-submodules of $D$ and introduce some notation. Twisting $D$ by a rank one $(\varphi, \Gamma)$-module, we can and do assume that the Sen weights of $D$ are given by $0$ and $\alpha\in E\setminus \Z_{<0}$.  Let $D'$ be a $(\varphi, \Gamma)$-submodule of $D$, by \cite[Prop.~4.1]{Liu07}, there exists $n$ such that $D'\supset t^n D$. We are led to study the torsion $(\varphi, \Gamma)$-module $D/t^nD$. 
\begin{lemma}
	(1)	If $\alpha\notin \Z$, then there exists a locally analytic character of $\Q_p^{\times}$ of weight $\alpha$ such that $D/t^nD \cong \cR_E/t^n \oplus \cR_E(\chi_{\alpha})/t^n$. 
	
	(2) If $\alpha \in \Z_{\geq 0}$ and $D$ is not de Rham, then $D/t^nD$ is isomorphic to a non-split extension of $\cR_E/t^n$ by
	$\cR_E(z^\alpha)/t^n$.
	
	(3) If $\alpha \in \Z_{\geq 0}$ and  $D$ is de Rham, then $D/t^n D \cong \cR_E(z^{\alpha})/t^n \oplus \cR_E/t^n$. 
\end{lemma}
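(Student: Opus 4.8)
The plan is to analyze the torsion $(\varphi,\Gamma)$-module $D/t^nD$ by studying its Sen operator and using the classification of rank-one $(\varphi,\Gamma)$-modules over $\cR_E$. Since $D$ has Sen weights $0$ and $\alpha$, the module $D/t^nD$ is a free $\cR_E/t^n$-module of rank $2$, killed by $t^n$, and carries a Sen operator $\nabla$ whose eigenvalues on the associated graded (or whose characteristic polynomial) involve $0,1,\dots,n-1$ and $\alpha,\alpha+1,\dots,\alpha+n-1$. First I would set up a dévissage along the $t$-adic filtration $t^iD/t^nD$, whose graded pieces $t^iD/t^{i+1}D$ are rank-two $(\varphi,\Gamma)$-modules over $\cR_E/t$, i.e.\ (after the standard identification) essentially determined by their Sen weights $\{i,\alpha+i\}$; for $\alpha\notin\Z$ these pieces split as $\cR_E/t\,\oplus\,\cR_E(z^\alpha)/t$ (up to the appropriate twist), and the game is to propagate such a splitting, or the lack thereof, up the filtration.

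For part (1), when $\alpha\notin\Z$ the two ``lines'' of Sen weights never interact: I would argue that $D/t^nD$ decomposes according to the generalized eigenspaces of a suitable operator (the Sen operator, or more robustly the element $\nabla(\nabla-1)\cdots$ versus its $\alpha$-shifted analogue), since the relevant polynomials $\prod_{j=0}^{n-1}(T-j)$ and $\prod_{j=0}^{n-1}(T-\alpha-j)$ are coprime in $E[T]$. This gives $D/t^nD\cong M_1\oplus M_2$ with $M_1$ of Sen weights $0,1,\dots,n-1$ and $M_2$ of Sen weights $\alpha,\dots,\alpha+n-1$. Each $M_i$ is then a rank-one $(\varphi,\Gamma)$-module over $\cR_E/t^n$; by the structure theory of (pseudo-)rank-one objects, $M_1\cong\cR_E(\chi)/t^n$ for some character $\chi$, and comparing Sen weights forces $\chi$ to be (a finite-order-unramified twist of) the trivial character, and similarly $M_2\cong\cR_E(\chi_\alpha)/t^n$ with $\chi_\alpha$ of weight $\alpha$. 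Lifting the unramified ambiguity back to $D$ and absorbing it into the initial twist (using $\End(D)$ considerations only where needed) gives the stated form; I would note that here any extension automatically splits because $\Ext^1(\cR_E/t^n,\cR_E(\chi_\alpha)/t^n)$ vanishes when $\chi_\alpha$ has non-integral weight, again by the coprimality of Sen polynomials.

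For parts (2) and (3), $\alpha\in\Z_{\geq0}$, so twisting by $z^\alpha$ is legitimate and $\cR_E(z^\alpha)/t^n$ is the relevant rank-one piece. The key dichotomy is whether the extension $0\to\cR_E(z^\alpha)/t^n\to D/t^nD\to\cR_E/t^n\to 0$ splits. I would identify the obstruction with the image of the extension class of $D$ (or of $D/t^{n}D$) in a suitable $\Ext^1$ or $H^1_{(\varphi,\Gamma)}$ group, and relate its vanishing precisely to $D$ being de Rham: for the de Rham case (3), the associated $p$-adic differential equation $\Delta$ satisfies $t^n\Delta\subset D\subset\Delta$ up to twist, which produces a canonical splitting of $D/t^nD$ compatible with the filtration; for the non-de-Rham case (2), one shows the class is nonzero, equivalently that $H^1$ does not reduce to $H^1_g$, so the extension is non-split --- and because the cokernel is cyclic the extension is in fact maximally non-split, giving the ``non-split extension of $\cR_E/t^n$ by $\cR_E(z^\alpha)/t^n$'' as claimed. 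The main obstacle I anticipate is case (2): one must show the extension stays non-split all the way up to $t^n$ and does not accidentally become split modulo some higher power of $t$, which requires controlling the Sen/differential-module structure (e.g.\ via the monodromy operator $N$ on $D_{\pst}$ being nonzero, or equivalently $\nabla-\alpha$ not being semisimple on $D/t^nD$) rather than just counting Sen weights; I would handle this by a direct computation of $H^1_{(\varphi,\Gamma)}(\cR_E(z^\alpha)/t^n)$ and tracking the image of the extension class, which is exactly the content of the cited results on $H^1$ versus $H^1_g$ (cf.\ the de Rham remark above using \cite[Lem.~1.11]{Ding4}).
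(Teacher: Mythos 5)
Your overall strategy --- reduce to a $t$-adic d\'evissage / Sen-operator analysis of $D/t^nD$ and then an $\Ext^1$ computation --- is a reasonable hand-made substitute for what the paper actually does, namely quote the equivalence between $(\varphi,\Gamma)$-modules killed by $t^n$ and torsion $B_{\dR}^+$-representations (\cite[Lem.~5.1.1]{BD2}) together with Fontaine's classification of $B_{\dR}$-representations \cite[Thm.~3.19]{Fo04}. Parts (1) and (3) are essentially sound: for (1) the decomposition is really by generalized $\nabla$-eigenvalue classes modulo $\Z$ (note that $\nabla$ is not $\cR_E/t^n$-linear --- multiplication by $f\in\cR_E$ shifts generalized eigenvalues by non-negative integers --- and it is precisely the grouping modulo $\Z$ that makes the summands $\cR_E/t^n$-submodules), and the identification of each rank-one summand with some $\cR_E(\chi)/t^n$ is the one non-formal input, which you are implicitly outsourcing to the same classification the paper cites.

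Part (2) contains a genuine error: the extension is written in the wrong direction. For $\alpha\in\Z_{\geq1}$ the group $\Ext^1(\cR_E/t^n,\cR_E(z^\alpha)/t^n)$ vanishes --- under the equivalence with torsion $B_{\dR}^+$-representations it is $H^1(\Gal_{\Q_p},t^{\alpha}B_{\dR}^+/t^{n+\alpha})$, all of whose graded pieces are $C(i)$ with $i\geq\alpha\geq1$ --- by exactly the computation you invoke to prove splitting in the non-integral case. So a non-split extension with sub $\cR_E(z^\alpha)/t^n$ and quotient $\cR_E/t^n$ does not exist, and an indecomposable $D/t^nD$ admits no such sub. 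The correct structure is the opposite one: the sub is $\cR_E/t^n$ (the $\ker\nu$-line in Fontaine's classification, i.e.\ the weight-$0$ direction) and the quotient is $\cR_E(z^\alpha)/t^n$; only in that direction does $\Ext^1$ contain $H^1(\Gal_{\Q_p},C)\cong E$, and only when $n>\alpha$. Getting the direction right is what later forces the constraint $n_1\leq n_2+\alpha$ in the proposition that follows. Two further gaps: you never establish that $D/t^nD$ admits a free rank-one sub with free quotient in the integral-weight case (there is no eigenspace decomposition to hand it to you; one needs the canonical $B_{\dR}$-subrepresentation $\ker\nu$, or equivalently the saturation of $t^{\max(\alpha,n)}\Delta$ for the attached differential equation), and the non-splitness assertion fails outright for $n\leq\alpha$ (e.g.\ $n=1$, $\alpha=1$: $D/tD$ has weights $0,1$ and splits because $H^1(\Gal_{\Q_p},C(\pm1))=0$, even for non-de Rham $D$), so the argument must take $n$ sufficiently large, which suffices for the applications.
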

\begin{proof}
The lemma follows from Fontaine's classification of $B_{\dR}$-representations \cite[Thm.~3.19]{Fo04},  and \cite[Lem.~5.1.1]{BD2}.
\end{proof}
The following two propositions follow easily from the lemma.
\begin{proposition}[Non-de Rham case]
	(1) If $\alpha\notin \Z$, for any $n_1, n_2 \in \Z_{\geq 0}$, there exists a unique $(\varphi, \Gamma)$-submodule of $D$ of Sen weights $(n_1, \alpha+n_2)$, denoted by $D_{(n_1,\alpha+n_2)}$.  Moreover, any $(\varphi, \Gamma)$-submodule of $D$ of rank $2$ has this form.
	
	(2) If $\alpha \in \Z_{\geq 0}$ and $D$ is not de Rham, for $n_1, n_2 \in \Z_{\geq 0}$, $n_1\leq n_2+\alpha$, there exists a unique $(\varphi, \Gamma)$-submodule of $D$ of Sen weights $(n_1, \alpha+n_2)$, denoted by $D_{(n_1, \alpha+n_2)}$. Moreover, any $(\varphi, \Gamma)$-submodule of $D$ of rank $2$ has this form.
\end{proposition}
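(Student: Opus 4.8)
The plan is to reduce everything, via \cite[Prop.~4.1]{Liu07} as recalled above, to a statement about torsion $(\varphi,\Gamma)$-modules. Any rank $2$ $(\varphi,\Gamma)$-submodule $D'$ of $D$ contains $t^nD$ for $n$ large, so $D'\mapsto D'/t^nD$ identifies the rank $2$ submodules of $D$ containing $t^nD$ with the $(\varphi,\Gamma)$-submodules of $D/t^nD$, and the Sen weights of $D'$ are obtained from $(0,\alpha)$ by adding the lengths of the two elementary divisors of $D/D'$ (concretely, if in the relevant filtration the $0$-direction is cut from $\cR_E/t^n$ down to $t^{m_1}\cR_E/t^n$ and the $\alpha$-direction from $\cR_E(\delta)/t^n$ down to $t^{m_2}\cR_E(\delta)/t^n$, then $D'$ has Sen weights $(m_1,\alpha+m_2)$). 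Since $n$ may be enlarged freely, it suffices to classify the $(\varphi,\Gamma)$-submodules of $D/t^nD$ for all $n$ and track Sen weights. A basic fact used throughout is that the $(\varphi,\Gamma)$-submodules of $\cR_E/t^n$, and of $\cR_E(\delta)/t^n$ for any character $\delta$, are precisely the chains $t^i\cR_E/t^n$, $t^i\cR_E(\delta)/t^n$ with $0\le i\le n$; this comes from the structure of $(\varphi,\Gamma)$-stable fractional ideals of $\cR_E$ and is part of the circle of results around \cite[Lem.~5.1.1]{BD2}.

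For part (1) I would invoke the lemma to write $D/t^nD\cong \cR_E/t^n\oplus \cR_E(\chi_\alpha)/t^n$ and exploit that, since $\alpha\notin\Z$, the Sen weights occurring in the first summand (integers) and in the second ($\alpha+\Z$) are disjoint, so no nonzero $(\varphi,\Gamma)$-module is simultaneously a subquotient of both. Given a submodule $M$, set $M_1:=M\cap(\cR_E/t^n)$ and $M_2:=M\cap(\cR_E(\chi_\alpha)/t^n)$; the quotient $M/(M_1\oplus M_2)$ is isomorphic both to a subquotient of $\cR_E/t^n$ and to a subquotient of $\cR_E(\chi_\alpha)/t^n$, hence vanishes, so $M=t^i\cR_E/t^n\oplus t^j\cR_E(\chi_\alpha)/t^n$ for suitable $i,j$. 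Its preimage in $D$ has Sen weights $(i,\alpha+j)$; letting $n$ grow gives existence and uniqueness of $D_{(n_1,\alpha+n_2)}$ for all $n_1,n_2\in\Z_{\geq 0}$ and shows every rank $2$ submodule has this shape.

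Part (2) is the one I expect to be delicate, because for $\alpha\in\Z_{\geq 0}$ the Sen weights of the two directions overlap and the splitting argument of part (1) fails; the constraint $n_1\le n_2+\alpha$ must be extracted from the \emph{non-splitness} in the lemma. The plan is to keep the filtration $0\to \cR_E(z^\alpha)/t^n\to D/t^nD\to \cR_E/t^n\to 0$ (with $\cR_E/t^n$ the quotient corresponding to the Sen weight $0$ direction and $\cR_E(z^\alpha)/t^n$ the sub corresponding to the weight $\alpha$ direction) and argue with extension classes: a submodule $M$ with image $t^a\cR_E/t^n$ in the quotient and intersection $t^b\cR_E(z^\alpha)/t^n$ with the sub is a sub-extension of the pull-back of the (nonzero) class of $D/t^nD$ to $\Ext^1(t^a\cR_E/t^n,\cR_E(z^\alpha)/t^n)$, and such $M$ with given $(a,b)$ correspond to preimages of that class under $\Ext^1(t^a\cR_E/t^n,t^b\cR_E(z^\alpha)/t^n)\to\Ext^1(t^a\cR_E/t^n,\cR_E(z^\alpha)/t^n)$. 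The heart of the matter is then a computation of these $\Ext^1$-groups among cyclotomic torsion modules: one must show this map is injective and that the nonzero class lies in its image exactly when $a\le b+\alpha$. This is the same computation underlying \cite[Thm.~3.19]{Fo04} and \cite[Lem.~5.1.1]{BD2}. Granting it, each admissible pair (i.e. $a\le b+\alpha$) gives a unique submodule, of Sen weights $(a,\alpha+b)$, and every rank $2$ submodule is of this form. To summarize, the main obstacle is precisely this $\Ext$-bookkeeping in part (2) — pinning down that the non-split class survives the push-forward exactly under $n_1\le n_2+\alpha$; everything else is formal.
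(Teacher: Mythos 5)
Your part (1) is fine and is essentially what the paper intends: the paper offers no written proof beyond ``follows easily from the lemma'', so the splitting argument via disjointness of the two arithmetic progressions of Sen weights, plus the fact that the only rank-one $(\varphi,\Gamma)$-submodules of $\cR_E(\delta)/t^n$ are the $t^i\cR_E(\delta)/t^n$, is the right way to fill it in. You are also right that all the substance of part (2) is hidden in the citation of \cite[Thm.~3.19]{Fo04} and \cite[Lem.~5.1.1]{BD2}; but your proposed $\Ext$-theoretic reduction, as set up, has two genuine problems beyond the fact that the key computation is only ``granted''.

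First, the uniqueness criterion is wrong. For fixed $(a,b)$, the set of submodules $M\subseteq D/t^nD$ with image $t^a\cR_E/t^n$ and intersection $t^b\cR_E(z^\alpha)/t^n$ is, when non-empty, a torsor under $\Hom_{(\varphi,\Gamma)}\bigl(t^a\cR_E/t^n,\,\cR_E(z^\alpha)/t^b\bigr)$ (replace one $M$ by the graph of a homomorphism into $B/B'$), and this $\Hom$-space is frequently non-zero: e.g.\ $\Hom(\cR_E/t^n,\cR_E/t)\supseteq E$ via reduction mod $t$. Injectivity of $\Ext^1(A',B')\to\Ext^1(A',B)$ only pins down the extension class of $M$, not $M$ itself (indeed that injectivity is \emph{equivalent} to surjectivity of $\Hom(A',B)\to\Hom(A',B/B')$, which leaves the torsor as large as ever). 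What you actually need for uniqueness is the vanishing of $\Hom(t^a\cR_E/t^n,\cR_E(z^\alpha)/t^b)$ for the pairs $(a,b)$ that occur, and your write-up never identifies this. Second, the dictionary between the directional invariants $(a,b)$ and the constraint $n_1\le n_2+\alpha$ is inverted, at least when $\alpha=0$: for a non--de Rham $D$ of weights $(0,0)$, the unique submodule of Sen weights $(0,1)$ is the preimage of the \emph{sub} $\cR_E/t\subset D/tD$, i.e.\ it cuts the quotient direction by $t$ and the sub direction not at all, so it has $(a,b)=(1,0)$ — which your rule $a\le b+\alpha$ excludes — while your ``admissible'' pair $(a,b)=(0,1)$ would require a splitting of the non-split extension $D/t^nD$ modulo $t\cR_E(z^\alpha)/t^n$ and produces nothing. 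So the combinatorics of which $(a,b)$ are realized, and how they match the multiset $(n_1,\alpha+n_2)$, has to be redone; as it stands the plan would certify non-existent submodules and reject existing ones. The clean route, and the one the paper's citations point to, is to pass to $D_{\dif}^+(D)$ and use Fontaine's classification of the $\nabla$-stable lattices there, where non--de Rham-ness directly forces the asymmetry $n_1\le n_2+\alpha$.
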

\begin{remark}
	When $n_1=n_2=n$, then the $(\varphi, \Gamma)$-submodule of $D$ of weights $(n, \alpha+n)$ is just $t^n D$. 
\end{remark}
\begin{proposition}[De Rham case] Assume $D$ is de Rham.
	
	(1) For each $n \in \Z$, $n\geq \alpha$, there exists a unique $(\varphi, \Gamma)$-submodule of $D$ of Sen weights $(n,n)$.  
	
	(2) If $\alpha\in \Z_{\geq 1}$, for $n_1 \in \Z_{\geq 0}$, $n_1\leq \alpha$  (resp. $n_2\in \Z_{\geq 0}$, there exist a unique $(\varphi, \Gamma)$-submodule of $D$ of Sen weights $(n_1, \alpha)$ (resp. $(0, \alpha+n_2)$), which we denote by $D_{(n_1,\alpha)}$ (resp. $D_{(0,\alpha+n_2)}$). 
	
	(3) If $\alpha=0$, let $n\geq 1$, the $(\varphi,\Gamma)$-submodules of $D$ of Sen weights $(0,n)$ are parametrized by lines $\sL\subset D_{\dR}(D)$, each denoted by $D_{n,\sL}$.
\end{proposition}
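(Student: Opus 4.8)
The plan is to reduce both statements to the (elementary) classification of $(\varphi,\Gamma)$-submodules of the torsion module $D/t^nD$, and then to read them off from the Lemma. First, by \cite[Prop.~4.1]{Liu07} every rank-$2$ $(\varphi,\Gamma)$-submodule $D'\subset D$ contains $t^nD$ for $n$ large, and $D'\mapsto D'/t^nD$ is a bijection between such submodules and the $(\varphi,\Gamma)$-submodules of $D/t^nD$. Moreover, if $D'\supset t^nD$ the Sen weights of $D'$ are recovered from the Jordan--H\"older content of the quotient $D/D'$ --- concretely, from how far the weight-$0$ and weight-$\alpha$ ``coordinates'' of $D$ are pushed up in passing to $D'$. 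So for each prescribed pair of Sen weights it suffices to enumerate the submodules of $D/t^nD$ of the corresponding colength and type.

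By the Lemma (de Rham case) we have, for every $n$, a $(\varphi,\Gamma)$-equivariant isomorphism $D/t^nD\cong\cR_E(z^\alpha)/t^n\oplus\cR_E/t^n$. Each summand $\cR_E(\delta)/t^n$ is uniserial with submodule lattice $\{t^j\cR_E(\delta)/t^n\}_{0\le j\le n}$, and $\End_{(\varphi,\Gamma)}(\cR_E(\delta)/t^n)=E$ (by d\'evissage, the only $\varphi$- and $\Gamma$-fixed vectors are the scalar multiples of the generator). The remaining, ``graph-type'', submodules of the direct sum are governed by the spaces $\Hom_{(\varphi,\Gamma)}(\cR_E(z^\alpha)/t^a,\cR_E/t^b)$ and $\Hom_{(\varphi,\Gamma)}(\cR_E/t^a,\cR_E(z^\alpha)/t^b)$, which one evaluates from standard computations of the $(\varphi,\Gamma)$-cohomology of the rank-one modules $\cR_E(z^j)$ (using $H^0_{(\varphi,\Gamma)}(\cR_E(z^j))\ne0\iff j\le0$, together with the Euler characteristic formula). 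When $\alpha\ge1$ these Hom-spaces vanish in exactly the ranges relevant to parts (1) and (2), which forces $D/D'$ to be the obvious split quotient; hence a submodule of Sen weights $(n,n)$ with $n\ge\alpha$ (namely $t^{n-\alpha}(\cR_E(z^\alpha)/t^n)$ inside $D/t^nD$), of Sen weights $(n_1,\alpha)$ with $n_1\le\alpha$ (namely $\cR_E(z^\alpha)/t^n\oplus t^{n_1}(\cR_E/t^n)$), and of Sen weights $(0,\alpha+n_2)$ (namely $t^{n_2}(\cR_E(z^\alpha)/t^n)\oplus\cR_E/t^n$) each exists and is unique. One could equally argue directly via Fontaine's classification \cite[Thm.~3.19]{Fo04}: the de Rham condition pins the associated $B_{\dR}^+$-lattice down uniquely, leaving no room for a family.

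For part (3), $\alpha=0$: the Lemma gives $D/t^nD\cong(\cR_E/t^n)^{\oplus2}$, and a submodule of Sen weights $(0,n)$ is exactly the kernel of a surjection $(\cR_E/t^n)^{\oplus2}\twoheadrightarrow\cR_E/t^n$; since $\End_{(\varphi,\Gamma)}(\cR_E/t^n)=E$, two such surjections have the same kernel iff they differ by $E^\times$, so these submodules are parametrized by $\bP^1(E)$. To make the identification with the set of lines of $D_{\dR}(D)$ canonical and independent of $n$, one notes that such a $D'$ is de Rham with $D_{\dR}(D')=D_{\dR}(D)$ but with Hodge filtration jumping at $0$ and $-n$; the associated line is $\Fil^0 D_{\dR}(D')\subset D_{\dR}(D)$, and every line occurs, by the equivalence between de Rham $(\varphi,\Gamma)$-modules and filtered de Rham data. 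The main obstacle is none of this in spirit but the bookkeeping of the second paragraph: tracking which composition factors of $D/t^nD$ a candidate quotient consumes, and checking the Hom-vanishing that excludes the graph-type competitors (existence is immediate, uniqueness is the point). The one place a positive-dimensional family genuinely appears is $\alpha=0$, precisely because the composition factor $z^0$ then has multiplicity two in $D/tD$.
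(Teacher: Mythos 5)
Your argument is correct and is essentially the paper's own route: the paper merely asserts that the proposition ``follows easily'' from the lemma giving $D/t^nD\cong\cR_E(z^\alpha)/t^n\oplus\cR_E/t^n$, and your classification of the submodules of this direct sum (uniseriality of each summand, $\End=E$, and exclusion of graph-type submodules) supplies exactly the omitted verification, as does your filtered-module description of part (3). The one loosely phrased step is that the relevant Hom-spaces ``vanish in exactly the ranges relevant''; the precise point, which does hold, is that a subquotient $t^{a'}\cR_E(z^\alpha)/t^{a}$ can be isomorphic to a subquotient $t^{b'}\cR_E/t^{b}$ only when the Sen weights match ($\alpha+a'=b'$), which the colength constraints imposed by the prescribed weights in (1) and (2) rule out.
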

\begin{remark}
(1)	By the proposition, one easily gets a full description of $(\varphi, \Gamma)$-submodules of $D$ in de Rham case. Note also that in case (3), $D_{n,\sL}$ can be isomorphic for different $\sL$. 

(2) Assume $D$ is de Rham and $\alpha\geq 1$. For $n_1, n_2\in \Z_{\geq 0}$, $n_1 \leq n_2+\alpha$, by (2) there exists a unique $(\varphi, \Gamma)$-submodule, denoted by $D_{(n_1, \alpha+n_2)}$, of $D$ of Sen weights $(n_1, \alpha+n_2)$ such that $D_{(n_1, \alpha+n_2)} \subset D_{(0,\alpha+n_2)}$. We have $D_{(n, \alpha+n)}\cong t^{n} D$. Recall there is an equivalence of categories between de Rham $(\varphi,\Gamma)$-modules and filtered Deligne-Fontaine modules (cf. \cite[Thm.~A]{Ber08a}). The associated Deligne-Fontaine modules (igonring the Hodge filtration) of $D_{(n_1, \alpha+n_2)}$ are all the same and isomorphic to that of $D$. For the Hodge filtration, to go from $D$ to $D_{(n_1, \alpha+n_2)}$ with $n_1<\alpha+n_2$, one just shifts the degree of the filtration respectively. 
\end{remark}

\subsubsection{Translations}\label{sec22} For a $(\varphi, \Gamma)$-module $D$ over $\cR_E$, by differentiating the $P^+$-action, we obtain an action of its Lie algebra $\fp^+$ on $D$, where $a^+$ acts via the operator $\nabla$ (given by differentiating the $\Gamma$-action), and $u^+$ acts via $t$. We  assume $D$ is of rank $2$ and  has Sen weights $(0,\alpha_D)$, with $\alpha_D\in E\setminus \Z_{<0}$. Let $P(T)\in E[T]$ be the monic Sen polynomial of $D$, hence $P(\nabla) (D)\subset tD$ (e.g. see \cite[Lem.~1.6]{Colm18}). For an operator $\nabla'$ such that $\nabla'(D) \subset tD$, we use $\frac{\nabla'}{t}$ to denote the operator mapping $x$ to $\frac{1}{t} \nabla'(x)$. In particular, we have the operator $\frac{P(\nabla)}{t}$ on $D$.  We recall the $\gl_2$-actions on $D$, and we refer to \cite[\S~3.2.1]{Colm16} for  details. We restrict to the case with infinitesimal character for our applications.
\begin{proposition}\label{lieaction}(1) If  $\deg P(T)=2$ (so is equal to $T(T-\alpha_D)$), then there exists a unique $\gl_2$-action on $D$ extending the $\fp^+$-action satisfying that $D$ has infinitesimal character. The action is given by $u^-=-\frac{P(\nabla)}{t}$ and $\fz=\alpha_D-1$. Consequently, $\fc$ acts via $\alpha_D^2-1$.

(2) If $\deg P=1$ (so $\alpha_D=0$ and $P(T)=T$), for $\alpha\in E$, there exists a unique $\gl_2$-action on $D$ extending the $\fp^+$-action satifying that $D$ has infinitesimal character and $\fz$ acts via $\alpha-1$. The action is given by $u^-=-\frac{\nabla(\nabla-\alpha)}{t}$. Consequently, $\fc=\alpha^2-1$.
\end{proposition}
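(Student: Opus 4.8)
The plan is to produce the $\gl_2$-action by explicitly writing down the operators and checking the defining relations of $\gl_2$, then argue uniqueness by a rigidity/deformation count. First I would recall that a $(\varphi,\Gamma)$-module $D$ already carries a $\fp^+$-action: $a^+$ acts by $\nabla$ (the logarithm of the $\Gamma$-action, so that $\gamma$ exponentiates $\log\chi_{\cyc}(\gamma)\cdot\nabla$), $u^+$ acts by multiplication by $t$, and $a^-=\fz-a^+$ is forced once we prescribe the central element $\fz$. The remaining freedom is the choice of the operator $u^-$, and the $\gl_2$-relations to verify are $[a^+,u^-]=-u^-$, $[a^-,u^-]=u^-$ (equivalently $[\fh,u^-]=-2u^-$ with $\fh=a^+-a^-$), $[u^+,u^-]=\fh$, and $[\fz,-]=0$. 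Since $a^+=\nabla$ acts on the Sen-weight-$0$ line trivially and on the other by $\alpha_D$, and $[\nabla,t]=t$ on $D$ (differentiating $(1+X)\mapsto \gamma(1+X)$), the bracket relations for $u^+$ are automatic; the content is in $u^-$.

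For part (1), take $u^-:=-\tfrac{P(\nabla)}{t}$ with $P(T)=T(T-\alpha_D)$; this is well-defined as an operator on $D$ because $P(\nabla)(D)\subset tD$ (the Sen polynomial property, \cite[Lem.~1.6]{Colm18}) and because $t$ is injective on $D$ (no $t$-torsion, $D$ being a $(\varphi,\Gamma)$-module over the Robba ring). I would check $[\nabla,\tfrac{P(\nabla)}{t}]$ using $[\nabla,t^{-1}]=-t^{-1}$ and $[\nabla,P(\nabla)]=0$, giving $[\nabla,u^-]=-u^-$; the relation $[t,u^-]=-\fh$, i.e. $[u^+,u^-]=\fh$, amounts to the polynomial identity $P(T)-P(T+1)=-(2T+1-\alpha_D)$ upon writing $t\cdot(-P(\nabla)/t)$ versus $(-P(\nabla)/t)\cdot t = -P(\nabla-1)$ after commuting $t$ past $\nabla$ (using $t\nabla=(\nabla-1)t$ on $D$), which matches $\fh=2\nabla-\alpha_D$ exactly when $\fz=\alpha_D-1$. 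The Casimir value then follows by evaluating $\fc=\fh^2+2\fh+4u^-u^+$ on a generic element, or just from $\fc=(\fz)^2-1+(\text{stuff that vanishes on the infinitesimal character})$; cleanly, $\fc$ acts by $\fz^2+2\fz\cdot 0+\dots$, and plugging $\fz=\alpha_D-1$ gives $(\alpha_D-1)^2+2(\alpha_D-1)=\alpha_D^2-1$. Part (2) is the same computation with $P(T)$ replaced by the degree-two polynomial $T(T-\alpha)$ (even though the genuine Sen polynomial is only $T$), the point being that $T(T-\alpha)(D)\subset \nabla(D)\subset tD$ still holds since $\deg P=1$ forces $\nabla(D)\subset tD$; here $\alpha$ is a free parameter and $\fz=\alpha-1$, $\fc=\alpha^2-1$.

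The uniqueness is the part I expect to require the most care. Given two $\gl_2$-actions extending the same $\fp^+$-action with the same central character, their difference is controlled by $\Hom$ of $D$ with a twist of $D$: concretely, if $u^-$ and $u^-{}'$ are two valid choices, then $\delta:=u^-{}'-u^-$ must commute with $\nabla$ in the sense $[\nabla,\delta]=-\delta$ and satisfy $[t,\delta]=0$, i.e. $\delta$ is $\cR_E$-linear (commutes with $u^+=t$, hence with all of $\cR_E^+$, and one checks with $\varphi,\psi$ too so it is a $(\varphi,\Gamma)$-module endomorphism after a Tate twist $\delta\colon D\to D(\text{wt shift})$); but $D$ has rank $2$ with distinct-enough Sen weights (or the required weight shift makes $\Hom$ vanish), so the space of such $\delta$ is trivial, forcing $u^-{}'=u^-$. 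I would make this precise by identifying the relevant $\Hom$-space: in case (1) the constraint $\delta t = t\delta$ together with $\delta\nabla = (\nabla+1)\delta$ says $\delta$ lands in the $\nabla$-eigenspace shifted by $1$ relative to $D$, which for Sen weights $(0,\alpha_D)$ and $\alpha_D\notin\Z_{<0}$ leaves no room (one invokes that a nonzero $(\varphi,\Gamma)$-module map $D\to D\otimes(\text{rank one of Sen weight }1)$ would violate slope/Sen-weight constraints, or cite the classification of sub-objects from the preceding propositions). The main obstacle is thus pinning down exactly which $\Hom$/$\Ext$ group governs the ambiguity and confirming its vanishing in each sub-case of the Sen weight $\alpha_D$ (integer vs. non-integer, de Rham vs. not) — everything else is the bracket bookkeeping sketched above, which reduces to the polynomial identities $P(T)-P(T-1)=\fh|_{\text{shift}}$.
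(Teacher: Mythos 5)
The paper itself does not prove this proposition; it is recalled from \cite[\S~3.2.1]{Colm16} (Prop.~3.4 and 3.6 of \emph{loc.\ cit.}), so your argument has to be judged on its own terms. Your existence half is essentially correct: $u^-=-P(\nabla)/t$ is well defined because $P(\nabla)(D)\subset tD$ and $t$ is injective, $[\nabla,u^-]=-u^-$ follows from $[\nabla,t^{-1}]=-t^{-1}$, and $[t,u^-]=P(\nabla+1)-P(\nabla)=2\nabla-\alpha_D+1=\fh$. (Your prose has sign slips --- $t^{-1}P(\nabla)t$ equals $P(\nabla+1)$, not $P(\nabla-1)$, the target is $[t,u^-]=+\fh$, and $\fh=2\nabla-\alpha_D+1$ rather than $2\nabla-\alpha_D$ --- but the polynomial identity you actually invoke is the correct one.) The Casimir value should be computed directly, e.g. $\fc=\fh^2-2\fh+4u^+u^-=\bigl((2\nabla-\alpha_D)^2-1\bigr)-4\nabla(\nabla-\alpha_D)=\alpha_D^2-1$; the shortcut ``$\fc=\fz^2+2\fz$'' is not a general identity and here is a consequence, not an input.

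The genuine gap is in uniqueness, and it is twofold. First, in case (1) the uniqueness is asserted among \emph{all} extensions with infinitesimal character, so you must also show that $\fz$ is forced to equal $\alpha_D-1$; comparing two actions ``with the same central character'' does not address this. Second, and more seriously, the reduction of $\delta:=u^{-\prime}-u^-$ to a $(\varphi,\Gamma)$-module homomorphism is unjustified: $[t,\delta]=0$ only gives commutation with the closed subalgebra generated by $t=\log(1+X)$, not with $X$, $\varphi$ or $\Gamma$ --- the axioms of a $\gl_2$-action extending the $\fp^+$-action impose nothing more on $u^-$ --- so there is no $\Hom$-space of $(\varphi,\Gamma)$-modules controlling $\delta$, and you concede you have not identified one. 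Both problems disappear if you exploit the Casimir rather than only $[u^+,u^-]=\fh$: having infinitesimal character means $\fz$ and $\fc$ act by scalars $c$ and $\mu$, whence $4tu^-=4u^+u^-=\fc-\fh^2+2\fh$ with $\fh=2\nabla-c$, and injectivity of $t$ forces $u^-=-Q(\nabla)/t$ with $Q(T)=T^2-(c+1)T+\tfrac14(c^2+2c-\mu)$ monic quadratic. Well-definedness requires $Q(\nabla)(D)\subset tD$, i.e. $P\mid Q$. When $\deg P=2$ this forces $Q=P$, hence $c=\alpha_D-1$, $\mu=\alpha_D^2-1$ and $u^-=-P(\nabla)/t$, proving (1) in full; when $P(T)=T$ it forces only $Q(0)=0$, i.e. $Q(T)=T(T-\alpha)$ with $\alpha=c+1$ arbitrary, recovering exactly the one-parameter family of (2).
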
\label{rlieact}
\begin{remark}(1) By \cite[Prop.~3.4]{Colm16}, if  $D$ does not contain a pathological  $(\varphi, \Gamma)$-submodule (cf. \cite[Rem.~3.5]{Colm16}), then the uniqueness in the proposition already holds with the condition having infinitesimal character replaced by having central character (for $\fz$). If $D$ contains a pathological $(\varphi, \Gamma)$-submodule, then the $\fp^+$-action on $D$ can extend to a $\gl_2$-action without infinitesimal character (but with central character).
	
	(2) For a general rank two $(\varphi, \Gamma)$-module $D'$, there exist $D$ as above and a continuous character $\chi$ such that $D'\cong D \otimes_{\cR_E} \cR_E(\chi)$. The $\gl_2$-action on $D'$ is then given by twisting the one on $D$ by $d \chi \circ \dett$.
\end{remark}

In the sequel, we let $\alpha\in E$ such that $D$ is equipped with the $\gl_2$-action with $u^-=-\frac{\nabla(\nabla-\alpha)}{t}$, $\fz=\alpha-1$. For example, $\alpha=\alpha_D$ (hence $\alpha\notin \Z_{<0}$ in this case) if $\deg P(T)=2$. If $P(T)=T$, $\alpha$ can be arbitrary. We equip $D\otimes_E V_k$ with a natural (diagonal) $\gl_2$-action. Note that on $D\otimes_E V_k$, $\fz=\alpha+k-1$.
\begin{lemma}
	The Casimir operator $\fc$ on $D\otimes_E V_k$ defines an endomorphism of $(\varphi, \Gamma)$-modules over $\cR_E$. 
\end{lemma}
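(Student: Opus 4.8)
The plan is to verify that $\fc$ commutes with the $(\varphi,\Gamma)$-action on $D\otimes_E V_k$. Since $\fc$ already acts $E$-linearly and preserves the $\cR_E^+$-submodule structure (being built from $\fh, u^+, u^-\in\text{U}(\gl_2)$ acting diagonally), and since the $\cR_E$-action on $D\otimes_E V_k$ is the \emph{unique} continuous extension of the $\cR_E^+$-action by Proposition~\ref{decork1} (the extension proposition), it suffices to check that $\fc$ commutes with $\varphi$ and with $\Gamma$; compatibility with the full $\cR_E$-action (in particular with $X^{-1}$) is then automatic by uniqueness of the extension, because $\varphi\fc\varphi^{-1}$ and $\gamma\fc\gamma^{-1}$ are $\cR_E^+$-linear endomorphisms agreeing with $\fc$ on $\cR_E^+$-generators. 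Alternatively, and perhaps more cleanly, I would argue directly on $D\otimes_E V_1^{\otimes k}$, where the $\cR_E$-structure is obtained by inverting $X$ on a tensor power, and then restrict to the direct summand $D\otimes_E V_k$; the point is to reduce to checking the commutation relations, which live purely in $\text{U}(\gl_2)$ together with the Lie-algebra action.

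The key mechanism is that the $P^+$-action on $D$ differentiates to the $\fp^+$-action with $a^+\mapsto\nabla$, $u^+\mapsto t$, and similarly on $V_k$ via the explicit formulas $a^+\mapsto$ (the weight grading), $u^+\mapsto X$, coming from the $P^+$-action $\begin{pmatrix}1&1\\0&1\end{pmatrix}\mapsto(1+X)$, $\begin{pmatrix}\Z_p^\times&0\\0&1\end{pmatrix}\mapsto\Gamma$ on $V_k$. The diagonal $\gl_2$-action on $D\otimes_E V_k$ therefore has $\fp^+$ acting compatibly with the diagonal $\fp^+\subset P^+$-action; and because $u^-$ on each factor is defined (Proposition~\ref{lieaction}) precisely so that $D$, resp.\ $V_k$, has infinitesimal character $\fc=\alpha^2-1$, resp.\ $\fc=k(k+2)$, the element $\fc$ acting diagonally on $D\otimes_E V_k$ is genuinely an element of $\text{U}(\gl_2)$ acting through a genuine $\gl_2$-representation — not merely a formal operator. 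Now, for $g\in P^+$ and $Y\in\gl_2$ we have the relation $g\cdot(Y v)=(\Ad(g)Y)\cdot(g v)$ on any locally analytic $P^+$-representation; applying this with $g=\varphi$ (i.e.\ $\begin{pmatrix}p&0\\0&1\end{pmatrix}$) and $g\in\Gamma$, and using that $\fc$ is central in $\text{U}(\gl_2)$ hence $\Ad(g)$-invariant, yields $\varphi\fc=\fc\varphi$ and $\gamma\fc=\fc\gamma$ on $D\otimes_E V_k$.

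The one genuine subtlety is that $\varphi$ and $\gamma$ on a $(\varphi,\Gamma)$-module are \emph{only} defined through the $P^+$-action — so the relation $g\cdot(Yv)=(\Ad(g)Y)\cdot(gv)$ is exactly what needs invoking, and it holds because the $\gl_2$-action restricted to $\fp^+$ \emph{is} the differentiated $P^+$-action, while the extension of $\fp^+$ to $\gl_2$ via $u^-=-\tfrac{\nabla(\nabla-\alpha)}{t}$ (resp.\ the analogous formula on $V_k$) is $\Ad(P^+)$-equivariant by construction — indeed $\Ad(g)u^- = u^- + (\text{terms in }\fp^+)$ for $g\in P^+$, and the defining formula for $u^-$ on $D$ and on $V_k$ is the unique $\fp^+$-equivariant one with the prescribed Casimir, so it transforms correctly. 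I would therefore spell out: (i) the diagonal $\fp^+$-action on $D\otimes_E V_k$ agrees with differentiating the diagonal $P^+$-action; (ii) the diagonal $u^-$ makes $D\otimes_E V_k$ a $\gl_2$-representation with $\fc$ acting centrally (direct from it being a tensor product of two $\gl_2$-representations); (iii) hence $\fc\in\text{U}(\gl_2)$ is $\Ad(P^+)$-fixed, so commutes with $\varphi$ and $\Gamma$; (iv) uniqueness of the $\cR_E$-extension upgrades this to commutation with all of $\cR_E$. The main obstacle is bookkeeping in step (i)–(ii): making sure the two sources of $\fp^+$-action (differentiated $P^+$ versus restriction of the constructed $\gl_2$) are literally the same operators on $D\otimes_E V_k$, so that the $P^+$-equivariance argument applies verbatim.
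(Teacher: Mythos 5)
Your argument is essentially the paper's own proof: $\fc$ is central in $\mathrm{U}(\gl_2)$, hence $\Ad(P^+)$-invariant, hence $\cR_E^+$-linear and commuting with $\varphi$ and $\Gamma$, and the uniqueness of the extension of the $\cR_E^+$-action to an $\cR_E$-action then upgrades this to $\cR_E$-linearity (the paper phrases this last step as $\End_{\cR_E,(\varphi,\Gamma)}(D\otimes_E V_k)\xrightarrow{\sim}\End_{\cR_E^+,(\varphi,\Gamma)}(D\otimes_E V_k)$). The extra compatibility $g(Yv)=(\Ad(g)Y)(gv)$ for $Y=u^-$ that you take care to justify is implicit in Colmez's construction of the $\gl_2$-action and is simply assumed in the paper's three-line proof.
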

\begin{proof}The operator $\fc$ commutes with the adjoint action of $P^+$. Hence
 $\fc$ is $\cR_E^+$-linear and commutes with $\varphi$ and $\Gamma$. The lemma  follows from $\End_{\cR_E,(\varphi, \Gamma)}(D\otimes_E V_k) \xrightarrow{\sim} \End_{\cR_E^+, (\varphi, \Gamma)}(D\otimes_E V_k)$.
\end{proof}
By the lemma, we can decompose $D\otimes_EV_k$ into a direct sum of generalized eigenspaces $(D \otimes_E V_k)\{\fc=\mu\}$, each being a saturated $(\varphi, \Gamma)$-submodule of $D \otimes_E V_k$. 
\begin{lemma}\label{inj}
	For any $\mu\in E$, the composition $(D\otimes_E V_k)[\fc=\mu] \hookrightarrow D\otimes_EV_k \xrightarrow{(\ref{projna})} D$ is injective. 
\end{lemma}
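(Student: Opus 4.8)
The plan is to identify the kernel of the projection map (\ref{projna}) with an explicit $(\varphi,\Gamma)$-submodule and then show that the Casimir $\fc$ cannot have $\mu$ as an eigenvalue on that submodule, so that the intersection of $(D\otimes_E V_k)[\fc=\mu]$ with the kernel is zero. First I would recall from the Proposition that $D\otimes_E V_k$ carries a filtration with graded pieces $D, tD, \ldots, t^kD$, and that the kernel of (\ref{projna}) is precisely the submodule $D\otimes_E(X\cR_E^+/X^{k+1})$, which is itself a successive extension $t^kD \text{--} \cdots \text{--} tD$. So it suffices to show that $\fc$ acts on this kernel with all generalized eigenvalues different from $\mu$ — or more precisely, since we only need injectivity on the genuine eigenspace, that $\fc - \mu$ is injective on the kernel.

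The key computation is the action of $\fc$ on the graded pieces $D\otimes_E(X^i\cR_E^+/X^{i+1}) \cong t^iD$. On $V_k$ one has $\fz = k$; the vector $t^i e = X^i e$ spans the graded piece $X^i\cR_E^+/X^{i+1}$ inside $V_k$, and differentiating the $P^+$-action one checks that this graded line is an $\fh$-eigenvector — indeed on the associated graded of $V_k$ the Lie algebra acts through its diagonal (the nilpotent $u^+$ raises the $X$-degree, hence is zero on each graded piece), so $u^+$ and, by the relation $\fz=k$, also the interplay with $u^-$ simplify. Concretely, the Casimir $\fc = \fh^2 + 2\fh + 4u^-u^+$ acting diagonally on $D\otimes_E(\text{gr}^i V_k)$ becomes, after tracking the weights of $D$ (Sen weights $0$ and $\alpha$, with $\fz$ acting by $\alpha-1$) tensored against the weight of $t^ie$ in $V_k$, a scalar that I expect to work out to $(\alpha + 2i - k)^2 - 1$ or a similar shifted square; in particular on $\text{gr}^i$ for $i = 1, \ldots, k$ this scalar differs from the value $\fc$ takes on the bottom piece $D$ itself. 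Then a standard dévissage argument (induct on the length of the filtration of the kernel) shows that if $v$ lies in the kernel and $(\fc-\mu)v = 0$, projecting $v$ to the top nonzero graded piece and using that $\fc-\mu$ is injective there forces $v$ to lie in the next step of the filtration, and one repeats.

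The main obstacle I anticipate is making the graded computation of $\fc$ clean: one must be careful that the $\gl_2$-action on $D\otimes_E V_k$ is the diagonal one, that the $\cR_E$-module structure (obtained by the extension in the Proposition) does not interact with the Lie algebra action in a way that spoils the filtration being $\gl_2$-stable, and that the operator $u^- = -\frac{\nabla(\nabla-\alpha)}{t}$ on $D$, when tensored with the $u^+ = $ "multiplication by $X$" operator on $V_k$, indeed lowers rather than raises the relevant degree so that on the associated graded only the $\fh^2+2\fh$ part survives. Once the eigenvalue of $\fc$ on each $\text{gr}^i$ is pinned down and seen to be distinct from the eigenvalue on $\text{gr}^0 = D$, the injectivity statement is immediate from the dévissage; alternatively, if one prefers to avoid the explicit scalar, one can argue more softly that $[\fc=\mu]$ meets each step of the filtration in a submodule that injects into a graded piece, and that the bottom graded piece $D$ receives the whole eigenspace faithfully because the composite $(D\otimes_E V_k)[\fc=\mu]\hookrightarrow D\otimes_E V_k \to D$ is a map of $(\varphi,\Gamma)$-modules over $\cR_E$ whose kernel, being a $\fc$-eigenspace inside the kernel of (\ref{projna}), must vanish by the graded eigenvalue distinctness.
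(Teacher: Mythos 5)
Your strategy breaks down at the first step, because the kernel of (\ref{projna}), namely $F^1:=D\otimes_E(X\cR_E^+/X^{k+1})$, is not stable under $\fc$. Writing the diagonal Casimir on $D\otimes_E V_k$ as $\fc\otimes 1+1\otimes\fc+2\fh\otimes\fh+4\,(u^-\otimes u^++u^+\otimes u^-)$, the term $u^-\otimes u^+$ raises the $X$-degree in $V_k$ and $2\fh\otimes\fh$ preserves it, but $u^+\otimes u^-$ strictly lowers it: applied to $v_{i+1}\otimes t^{i+1}e$ it produces $(i+1)(k-i)\,u^+(v_{i+1})\otimes t^{i}e$. Hence $\fc$ maps $F^1$ out of itself (already the $e_1$-component of an element of $F^1$ contributes $4k\,u^+(v_1)\otimes e_0$), the filtration $F^1\supset F^2\supset\cdots$ is not $\fc$-stable, and there is no induced action of $\fc$ on the graded pieces $t^iD$ to which a d\'evissage could apply. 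Even if you discard the degree-lowering term by fiat, the operator induced on $\mathrm{gr}^i$ is $(\alpha^2-1)+k(k+2)+2(2i-k)\fh$ with $\fh=2\nabla-\alpha+1$ \emph{not} scalar on $D$, so the hoped-for distinct scalars of the shape $(\alpha+2i-k)^2-1$ do not materialize. Finally, the statement you reduce to --- that $\mu$ is not a generalized eigenvalue of $\fc$ on the kernel --- is false in general: for $\alpha=0$ and $D$ de Rham, Proposition \ref{transProp} (4) gives $(D\otimes_EV_k)\{\fc=k^2-1\}\cong D\oplus t^kD$, of rank $4$, which necessarily meets the rank-$(2k)$ kernel of the $\cR_E$-linear map (\ref{projna}) nontrivially; yet the Lemma, which concerns only the genuine eigenspace, still holds there.

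The degree-lowering cross term you hoped to see vanish on the associated graded is in fact the engine of the proof. The paper writes $v=\sum_{i}v_i\otimes e_i$ and records in (\ref{casmirk}) that the $e_i$-coefficient of $\fc v$ contains the term $4(i+1)(k-i)\,u^+v_{i+1}$. If $\fc v=\mu v$ and $v_0=0$, comparing $e_0$-coefficients gives $4k\,u^+v_1=0$, and since $u^+$ acts on $D$ by multiplication by $t$, which is injective, $v_1=0$; inductively, comparing $e_{i-1}$-coefficients kills $v_i$ for every $i$. You should therefore compute $\Delta(\fc)$ explicitly and build the argument on the injectivity of $u^+$ on $D$, rather than on eigenvalues of $\fc$ along a filtration that $\fc$ does not preserve.
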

\begin{proof}
	Let $e_{i}=t^ie$ for $i=0, \cdots, k$, and $v=\sum_{i=0}^{k} v_i \otimes e_i$. We have (letting $v_{-1}=v_{k+1}=0$)
	\begin{equation}\label{casmirk}
		\fc v= \sum_{i=0}^{k} (\fc v_i) \otimes e_i + \sum_{i=0}^{k+1} v_i \otimes (\fc e_i) + \sum_{i=0}^{k} (4u^- v_{i-1}+4(i+1)(k-i) u^+ v_{i+1}+2(2i-k) \fh v_i) \otimes e_i.
	\end{equation}
If $\fc v=\mu v$ and $v_{0}=0$, comparing the $e_0$ terms on both sides and using $u^+$ is injective on $D$, we easily see $v_1=0$. Using induction and  similar argument (comparing the $e_{i-1}$ term), we see $v_i=0$ for all $i$ hence $v=0$. The lemma follows.
\end{proof}
Now  consider the $k=1$ case.
\begin{lemma}\label{keylm}
	(1) Suppose $\alpha_D\notin \Z$ or $\alpha_D\in \Z_{\geq 1}$, then we have a $\gl_2(\Q_p)$-equivariant isomorphism of $(\varphi, \Gamma)$-modules over $\cR_E$: 
	\begin{equation*}
		D\otimes_E V_1 \cong (D\otimes_E V_1)[\fc=(\alpha+1)^2-1]  \oplus (D\otimes_E V_1)[\fc=(\alpha-1)^2-1]=D_{(0, \alpha+1)} \oplus D_{(1,\alpha)}.
	\end{equation*}

(2) Suppose $\alpha_D=0$ and $D$ is not de Rham, then $D\otimes_E V_1 =(D\otimes_E V_1)\{\fc=0\}$ 
which, as $(\varphi, \Gamma)$-module or $\gl_2$-module, is isomorphic to a non-split self-extension of $(D\otimes V_1)[\fc=0]\cong D_{(0,1)}$.

(3) Suppose $\alpha_D=0$ and $D$ is de Rham. Then $D\otimes_E V_1 \cong D \oplus t D$. And we have
\begin{enumerate}
	\item [(a)] If $\alpha\neq 0$, then $D\otimes_E V_1\cong (D\otimes_E V_1)[\fc=(\alpha+1)^2-1]  \oplus (D\otimes_E V_1)[\fc=(\alpha-1)^2-1]=D \oplus tD$.
	\item [(b)] If $\alpha=0$, then $D\otimes_E V_1 =(D\otimes_E V_1)\{\fc=0\}=(D\otimes_E V_1)[\fc^2=0]$, and $(D\otimes V_1)[\fc=0]\cong D$.
\end{enumerate}
\end{lemma}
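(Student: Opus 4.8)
The plan is to analyze $D \otimes_E V_1$ directly via the explicit formulas for the $\gl_2$-action, exploiting the computation \eqref{casmirk} for $k=1$ together with the operator identities $u^+ = t$, $a^+ = \nabla$, $u^- = -\tfrac{\nabla(\nabla-\alpha)}{t}$ on $D$. Write $e_0 = e$, $e_1 = te$, so every $v \in D \otimes_E V_1$ is $v = v_0 \otimes e_0 + v_1 \otimes e_1$. Specializing \eqref{casmirk}: on $V_1$ one has $\fc e_0 = ?$ — first I would record the action of $\fh, u^\pm$ on $e_0, e_1$ (from the $P^+$-formulas in \S2.1: $u^+ e_0 = e_1$, $u^+ e_1 = 0$, and $\fh, u^-$ determined by highest/lowest weight), so that the diagonal $\fc$ on $D \otimes_E V_1$ becomes an explicit $2\times2$ matrix of operators in $\nabla$ and $t$ acting on the pair $(v_0,v_1)$. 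Since $\fz = \alpha$ on $D \otimes_E V_1$, the possible eigenvalues of $\fc$ are constrained; concretely, solving $\fc v = \mu v$ reduces (via Lemma \ref{inj}, which says $v \mapsto v_0$ is injective on each eigenspace) to finding $v_0 \in D$ satisfying a single operator equation, and then $v_1$ is forced.

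The core computation is this: for $v \in (D\otimes_E V_1)[\fc = \mu]$, comparing $e_1$-terms in $\fc v = \mu v$ expresses $v_1$ as an explicit differential operator in $\nabla$ applied to $v_0$ (this is where one uses that $u^+ = t$ is injective on $D$); substituting back into the $e_0$-comparison yields a polynomial equation $Q_\mu(\nabla) v_0 \in tD$ forcing, in the generic case, that $v_0$ lies in a specific $(\varphi,\Gamma)$-submodule of Sen weights determined by the roots of $Q_\mu$. Since $D$ has Sen weights $(0,\alpha_D)$ and $D \otimes V_1$ has Sen weights obtained by adding $\{0,1\}$, the Sen polynomial of $D \otimes V_1$ is $(T)(T-1)(T-\alpha_D)(T-\alpha_D-1)$, which factors according to the $\fc$-decomposition: the weight-$(0,\alpha+1)$ part and the weight-$(1,\alpha)$ part correspond to $\mu = (\alpha+1)^2-1$ and $\mu=(\alpha-1)^2-1$ respectively. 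For part (1) ($\alpha_D \notin \Z$ or $\alpha_D \in \Z_{\geq 1}$, with $\alpha = \alpha_D$), these two eigenvalues are distinct, so $\fc$ is semisimple and $D \otimes_E V_1$ splits as a direct sum; I would then identify each summand with the unique submodule of the prescribed Sen weights supplied by the Propositions on $(\varphi,\Gamma)$-submodules (using Lemma \ref{inj} to see the summand embeds in $D$ with the right weights). For part (3)(a) ($\alpha_D = 0$, $D$ de Rham, $\alpha \neq 0$): same eigenvalue separation gives the splitting $D \oplus tD$, where the identification $D \otimes_E V_1 \cong D \oplus tD$ as $(\varphi,\Gamma)$-modules is already the content of the Proposition (successive extension of $tD$ by $D$) together with de Rhamness forcing the extension to split — concretely via the element $1\otimes e$ type argument as in Example \ref{ex1}, or via the Deligne–Fontaine description in Remark 2.8(2).

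The degenerate cases (2) and (3)(b) are where $\alpha = 0$, so both candidate eigenvalues coincide at $\mu = 0$ and $\fc$ is no longer obviously semisimple; this is the main obstacle. Here I would argue that $(D\otimes_E V_1)\{\fc=0\}$ is everything (the only eigenvalue), and compute the image of $\fc$ acting on $D \otimes_E V_1$: the $e_1$-component computation shows $\fc$ maps onto $D \otimes e_1 \cong tD$ with kernel $(D\otimes_E V_1)[\fc=0]$, and one checks $\fc^2 = 0$ by a direct substitution, giving $(D\otimes_E V_1)[\fc^2=0] = D\otimes_E V_1$. For the identification $(D\otimes_E V_1)[\fc=0] \cong D$: the injection of Lemma \ref{inj} combined with a Sen-weight count — in case (3)(b) both $D$ and the eigenspace have Sen weights $(0,0)$, and in case (2) both are the non-de-Rham $D_{(0,1)}$ (here $\alpha = 0 = \alpha_D$ but $D$ not de Rham forces the submodule of weights $(0,1)$, not $(0,0)$ — I need to be careful that with $u^- = -\nabla^2/t$ the relevant equation is $\nabla^2 v_0 \in tD$, and non-de-Rhamness of $D$ rules out $v_0$ generating all of $D$). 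Finally, for the non-splitness assertion in (2): if $D \otimes_E V_1$ were a split self-extension of $D_{(0,1)}$, pulling back along \eqref{injna} would contradict the non-split extension structure $t D -- D$ of the Proposition; alternatively one checks $\fc \neq 0$ as an operator (its $e_1$-component is surjective onto $tD \neq 0$), which directly witnesses non-semisimplicity. I expect the bookkeeping of which Sen weights/submodules arise — especially disentangling the role of $\alpha$ versus $\alpha_D$ in the de Rham versus non-de Rham subcases — to be the delicate point, whereas the operator algebra is routine given \eqref{casmirk}.
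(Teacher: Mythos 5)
Your overall strategy coincides with the paper's: the explicit Casimir formula on $v_0\otimes e_0+v_1\otimes e_1$, the reduction via injectivity of $v\mapsto v_0$ on eigenspaces, the determination of the two eigenvalues $(\alpha\pm1)^2-1$, and the identification of the eigenspaces by Sen-weight bookkeeping and the uniqueness propositions for submodules. Two small points of care: in the de Rham case with $\alpha_D\in\Z_{\geq 1}$ the divisibility of the Sen polynomial by $T(T-\alpha-1)$ a priori allows weights $(0,0)$ or $(\alpha+1,\alpha+1)$, and one must rule these out (the paper uses that the eigenspace is simultaneously a submodule of $D$ and a saturated submodule of $D\otimes_E V_1$; your global weight count on $D\otimes_E V_1$ also works). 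Also, your claim that ``$\fc$ maps onto $D\otimes e_1$'' in the $\alpha=0$ cases is false: the $e_0$-component of $\fc v$ is $-4\nabla v_0+4tv_1$, which need not vanish; the image of $\fc$ lands in $(D\otimes_E V_1)[\fc=0]$, not in $D\otimes e_1\cong tD$.

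The genuine gap is the non-splitness assertion in part (2). Your first argument misreads Proposition 2.2: the notation $tD--D$ there denotes a \emph{possibly split} extension (and it does split, e.g.\ for $D=\cR_E$, Example \ref{ex1}), so there is no ``non-split extension structure'' to contradict. Your second argument ($\fc\neq 0$ witnesses non-semisimplicity) only shows that $D\otimes_E V_1$ does not decompose compatibly with the $\fc$-eigenspace filtration; it does not exclude an abstract $(\varphi,\Gamma)$-module isomorphism $D\otimes_E V_1\cong D_{(0,1)}\oplus D_{(0,1)}$, because $D_{(0,1)}^{\oplus 2}$ admits plenty of nonzero nilpotent $(\varphi,\Gamma)$-endomorphisms (e.g.\ $\bigl(\begin{smallmatrix}0&1\\0&0\end{smallmatrix}\bigr)$), so the existence of a nonzero $\fc$ with $\fc^2=0$ is consistent with splitting. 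The paper closes this by a Hodge--Tate argument: $D_{(0,1)}$ has distinct integral Sen weights, hence is Hodge--Tate, so a splitting would force $D\otimes_E V_1$ to be Hodge--Tate; but its saturated submodule $tD$ is not Hodge--Tate since $D$ has weights $(0,0)$ and is not de Rham. You need this (or an equivalent) argument to complete part (2).
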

\begin{proof} Let $e_0=e$ and $e_1=t e$.  For $v=v_0\otimes e_0+v_1 \otimes e_1\in D \otimes_E V_1$, by (\ref{casmirk})
	\begin{equation*}
		\fc (v) = (\alpha^2+2) v+(-2\fh v_0+4u^+ v_1) \otimes e_0+(4u^- v_0+2\fh v_1) \otimes e_1.
	\end{equation*}
Suppose $\fc(v)=(\alpha^2+2+\lambda) v$, then (noting $\fh=2\nabla-\alpha+1$):
\begin{equation*}
	\begin{cases}
		(4 \nabla-2\alpha+2+\lambda) v_0=4t v_1 \\
		(4 \nabla -2\alpha+2-\lambda) v_1=\frac{4\nabla(\nabla-\alpha)}{t} v_0
	\end{cases}.
\end{equation*}
As $\nabla(tx)=t(\nabla+1)x$ for $x\in D$, we deduce 
\begin{equation*}
	4\nabla(\nabla-\alpha) v_0=(4\nabla-2\alpha-2-\lambda) tv_1=\frac{1}{4}(4\nabla-2\alpha-2-\lambda)	(4 \nabla-2\alpha+2+\lambda) v_0.
\end{equation*}
If $v \neq 0$ (hence $v_0\neq 0$), $\lambda=\pm 2\alpha-2$. We also see
\begin{equation}\label{inf0}
	(D \otimes_E V_1)[\fc=(\alpha\pm 1)^2-1]=\big\{v_0 \otimes e_0+\frac{\nabla-\alpha/2\alpha\pm \alpha/2}{t}v_0 \otimes e_1  \ |\ v_0\in D, \ (\nabla-\alpha/2\pm \alpha/2)v_0 \subset tD\big\}.
\end{equation}
It is clear that  the image of $(D \otimes_E V_1)[\fc=(\alpha\pm 1)^2-1] \hookrightarrow D$ contains $tD$.  In particular, $(D \otimes_E V_1)[\fc=(\alpha\pm 1)^2-1]$ is a $(\varphi, \Gamma)$-submodule of $D$ of rank $2$. If $\alpha\neq 0$, we have then 
\begin{equation}\label{keydeo}
	(D \otimes_E V_1)[\fc=(\alpha+1)^2-1] \oplus 	(D \otimes_E V_1)[\fc=(\alpha-1)^2-1] \xrightarrow{\sim} 	D \otimes_E V_1.
\end{equation} 
If $\alpha=0$, we have $D \otimes_E V_1\cong (D \otimes_E V_1)\{\fc=0\}$. Moreover, by direct calculation, we have $(D \otimes_E V_1)\{\fc=0\}=(D \otimes_E V_1)[\fc^2=0]$.

Next we describe the $(\varphi, \Gamma)$-module  $(D \otimes_E V_1)[\fc=(\alpha\pm 1)^2-1]$. We will use the $\gl_2$-action (although one can directly describe it using (\ref{inf0})). 
For $x\in (D \otimes_E V_1)[\fc=(\alpha+1)^2-1]$, using $\fc=\fh^2-2\fh+4u^+u^-$, we have $\nabla (\nabla-\alpha-1) (x)\in t ((D \otimes_E V_1)[\fc=(\alpha+1)^2-1])$. Hence the Sen polynomial of $(D \otimes_E V_1)[\fc=(\alpha+1)^2-1]$ divides $T(T-\alpha-1)$. Similarly, the Sen polynomial of $(D \otimes_E V_1)[\fc=(\alpha-1)^2-1]$ divides $(T-1)(T-\alpha)$. 

 If $D$ is not de Rham, then $(D \otimes_E V_1)[\fc=(\alpha\pm 1)^2-1]$ twisted by any character is also not de Rham. Hence its Sen polynomial has to be of degree $2$. Together with the above discussion,  we easily deduce $(D \otimes_E V_1)[\fc=(\alpha+1)^2-1]=D_{(0,\alpha+1)}$, and if moreover $\alpha\neq 0$,  $(D \otimes_E V_1)[\fc=(\alpha-1)^2-1]\cong D_{(1,\alpha)}$. If $\alpha=0$, we see $(D\otimes_E V_1)/(D \otimes_E V_1)[\fc=0]$ is also a $(\varphi, \Gamma)$-module of rank $2$ of Sen weights $(0,1)$. Consider the composition 
 \begin{equation}\label{compo2}
 	t D \xrightarrow{(\ref{injna})} D \otimes_E V_1 \twoheadrightarrow (D\otimes_E V_1)/(D \otimes_E V_1)[\fc=0].
 \end{equation}
By Lemma \ref{inj},  $tD \cap (D \otimes_E V_1)[\fc=0]=0$ (as submodules of $D\otimes_E V_1$), hence (\ref{compo2}) is also injective. We deduce $(D\otimes_E V_1)/(D \otimes_E V_1)[\fc=0]\cong D_{(0,1)}$. If $D\otimes_E V_1\cong D_{(0,1)} \oplus D_{(0,1)}$, then $D \otimes_E V_1$ is Hodge-Tate, which is impossible as its saturated $(\varphi, \Gamma)$-submodule $tD$ is not Hodge-Tate. So $D\otimes_E V_1$ is a non-split self-extension of $D_{(0,1)}$. 
This finishes (2) (and (1) for non-de Rham case). 

Assume now $D$ is de Rham, and suppose first $D$ has distinct Sen weights. Then the $\gl_2$-action on $D$ is unique and $\alpha=\alpha_D \in \Z_{\geq 1}$. The Sen weights of  $(D \otimes_E V_1)[\fc=(\alpha+1)^2-1]$ are $(0,\alpha+1)$ or $(0,0)$ or $(\alpha+1,\alpha+1)$. However, $(D \otimes_E V_1)[\fc=(\alpha+1)^2-1]$ is a submodule of $D$ (resp. a saturated submodule of $D\otimes_EV_1$), so it can not have Sen weights $(0,0)$ (resp. $(\alpha+1,\alpha+1)$). We deduce hence $(D \otimes_E V_1)[\fc=(\alpha+1)^2-1]\cong D_{(0,\alpha+1)}$. As $(D \otimes_E V_1)[\fc=(\alpha-1)^2-1]$ has Sen weights $(1,\alpha)$ and contains $tD$, we see $(D \otimes_E V_1)[\fc=(\alpha-1)^2-1]\cong D_{(1,\alpha)}$. 

Finally, suppose $D$ is de Rham of weights $(0,0)$. By (\ref{inf0}) and $\nabla D \subset tD$, the composition $(D\otimes_E V_1)[\fc=(\alpha+1)^2-1] \hookrightarrow D \otimes_E V_1 \twoheadrightarrow D$ is surjective. We have thus $D\otimes_E V_1\cong D \oplus tD$.  By comparing the Sen weights, the injection $tD \rightarrow (D\otimes_E V_1)/(D\otimes_E V_1)[\fc=(\alpha+1)^2-1]$ (obtained as in (\ref{compo2})) is also an isomorphism. (3) follows.
\end{proof}
\begin{remark}\label{remind}
(1) Assume $\alpha_D\neq 0$, then we obtain two filtrations on $D\otimes_E V_1$:
\begin{equation*}
	D\otimes_E V_1 \cong D_{(0,\alpha+1)} \oplus D_{(1,\alpha)} \cong [tD--D].
\end{equation*}
If $D$ is not trianguline, then $\Hom_{(\varphi, \Gamma)}(D, D_{(0,\alpha+1)})=\Hom_{(\varphi, \Gamma)}(D,D_{(1,\alpha)})=0$, so the extension $tD--D$ is non-split. 

(2) The induced $\gl_2$-action on $(D\otimes_EV_1)[\fc=(\alpha\pm 1)^2-1]$, and $(D\otimes_E V_1)\{\fc=0\}/(D\otimes_E V_1)[\fc=0]$ (when $\alpha=0$) coincides with the one in Proposition \ref{lieaction} and Remark \ref{rlieact} (2).

\end{remark}
\begin{proposition}\label{transProp}
	(1) Suppose $\alpha_D\notin \Z$ or $\alpha_D \in \Z$ and $\alpha_D \geq k$, then we have a $\gl_2$-equivariant isomorphism of $(\varphi, \Gamma)$-modules over $\cR_E$: 
	\begin{equation*}
		D\otimes_E V_k \cong \oplus_{i=0}^k(D\otimes_E V_k)[\fc=(\alpha+k-2i)^2-1]=\oplus_{i=0}^k D_{(i, \alpha+k-i)}.
	\end{equation*}

(2) Suppose $\alpha_D \neq 0$, then $(D\otimes_E V_k)\{\fc=(\alpha+k)^2-1\}=(D\otimes_E V_k)[\fc=(\alpha+k)^2-1]\cong D_{(0,\alpha+k)}$.

(3) Suppose $\alpha_D=0$ and $D$ is not de Rham, $(D\otimes_E V_k)\{\fc=k^2-1\}$ is a non-split self-extension of $(D\otimes_E V_k)[\fc=k^2-1]\cong D_{(0,k)}$. 

(4) Suppose $\alpha_D=0$ and  $D$ is de Rham. If $\alpha \in E\setminus \Z_{<0}$, then $(D\otimes_E V_k)[\fc=(\alpha+k)^2-1]\cong D$, and if $\alpha=0$,  $(D\otimes_E V_k)\{\fc=k^2-1\}=(D\otimes_E V_k)[(\fc-k^2+1)^2=0]\cong D \oplus t^k D$. 
\end{proposition}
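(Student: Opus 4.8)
The plan is to deduce Proposition~\ref{transProp} from the $k=1$ case (Lemma~\ref{keylm}) by induction on $k$, exploiting the associativity $V_k \hookrightarrow V_1^{\otimes k}$ (or more precisely the fact that $V_k$ is a direct summand of $V_1 \otimes_E V_{k-1}$) together with the Casimir decomposition already established. First I would record the general decomposition formula \eqref{casmirk} specialized to $D \otimes_E V_k$ and observe, exactly as in Lemma~\ref{keylm}, that the eigenvalues of $\fc$ acting on $D \otimes_E V_k$ can only be of the form $(\alpha + k - 2i)^2 - 1$ for $i = 0, \dots, k$: indeed, $\fz = \alpha + k - 1$ is central, and running through the recursion on the components $v_i$ (using injectivity of $u^+$ on $D$) one sees that any generalized eigenvector is constrained so that the ``highest weight'' of the constituent is $\alpha + k - 2i$ for some $i$. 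This gives the generalized eigenspace decomposition $D \otimes_E V_k = \bigoplus_{i=0}^{k} (D \otimes_E V_k)\{\fc = (\alpha+k-2i)^2-1\}$, where some summands may coincide/vanish when the eigenvalues collide (which happens precisely when $\alpha \in \Z$ and two values $\alpha + k - 2i$ differ by a sign, i.e.\ $\alpha + k - 2i = -(\alpha + k - 2j)$).

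For part (1): when $\alpha_D \notin \Z$, or $\alpha_D \in \Z_{\geq 0}$ with $\alpha_D \geq k$, the $k+1$ values $(\alpha + k - 2i)^2 - 1$ are pairwise distinct (note $\alpha = \alpha_D$ here), so the decomposition is into honest eigenspaces. To identify each summand with $D_{(i, \alpha+k-i)}$, I would use the same Sen-polynomial argument as in Lemma~\ref{keylm}: from $\fc = \fh^2 - 2\fh + 4u^+u^-$ and $\fh = 2\nabla - (\alpha+k) + 1$ on $(D\otimes_E V_k)[\fc = (\alpha+k-2i)^2-1]$ one gets that its Sen polynomial divides $(T - i)(T - (\alpha+k-i))$; combined with Lemma~\ref{inj} (the summand injects into $D$ via \eqref{projna}) and the fact that $tD \subset $ its image (one checks the image contains $t^{?}D$ by an explicit section, or argues by rank), one pins down the submodule as the unique $(\varphi,\Gamma)$-submodule of those Sen weights from the Propositions on submodules. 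Alternatively — and this is cleaner — I would argue inductively: $D \otimes_E V_k$ is a summand of $(D \otimes_E V_{k-1}) \otimes_E V_1$, decompose the latter using the inductive hypothesis on $D \otimes_E V_{k-1} = \bigoplus_j D_{(j,\alpha+k-1-j)}$ and then apply Lemma~\ref{keylm}(1) to each $D_{(j, \alpha+k-1-j)} \otimes_E V_1$ (which has distinct Sen weights under our hypotheses), collecting terms.

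For parts (2)--(4), which concern the single ``top'' generalized eigenspace $\{\fc = (\alpha+k)^2-1\}$ (resp.\ $\{\fc = k^2-1\}$): here $i = 0$, so the relevant submodule wants Sen weights $(0, \alpha+k)$. In case (2) ($\alpha_D \neq 0$), the value $(\alpha+k)^2-1$ is still simple among the $k+1$ eigenvalues when $\alpha \notin \Z$; when $\alpha_D \in \Z_{\geq 1}$ and $\alpha_D < k$ there could be collisions, but $(\alpha+k)^2 - 1 = (\alpha+k-2i)^2-1$ forces $\alpha + k - 2i = -(\alpha+k)$, i.e.\ $i = \alpha + k > k$, impossible — so the top eigenvalue is always simple, giving $\{\fc=(\alpha+k)^2-1\} = [\fc=(\alpha+k)^2-1]$, and the Sen-polynomial + saturated-submodule argument identifies it with $D_{(0,\alpha+k)}$. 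In case (3), $\alpha = 0$, $D$ not de Rham: the top eigenvalue $k^2-1$ coincides with the $i=k$ value $(-k)^2 - 1 = k^2-1$, so the generalized eigenspace has rank $4$; the non-de-Rham obstruction forces the Sen polynomial of $[\fc = k^2-1]$ to have degree $2$ (as in Lemma~\ref{keylm}(2)), giving $[\fc = k^2-1] \cong D_{(0,k)}$, and then the quotient $\{\fc=k^2-1\}/[\fc=k^2-1]$ is again rank $2$ of Sen weights $(0,k)$; arguing that a split self-extension would make $D\otimes_E V_k$ have a Hodge--Tate-in-range behaviour contradicting that $t^k D$ (the saturated submodule from \eqref{injna}) is not Hodge--Tate, one concludes the self-extension is non-split. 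Case (4) ($\alpha_D = 0$, $D$ de Rham): $D \otimes_E V_k \cong \bigoplus_{i=0}^k t^i D$ by the proposition on de Rham submodules (or inductively from Lemma~\ref{keylm}(3)); then for $\alpha \in E \setminus \Z_{<0}$ with $\alpha \neq 0$ the top eigenvalue is simple and $[\fc = (\alpha+k)^2-1]$ has image surjecting onto $D$ (using $\nabla D \subset tD$ in \eqref{inf0}-type formulas), hence $\cong D$; for $\alpha = 0$ it collapses to the $[(\fc - k^2+1)^2 = 0]$ statement with $[\fc=k^2-1]\cong D$, deduced just as in the $k=1$ case.

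The main obstacle I anticipate is the bookkeeping in case (3) (and the collision analysis generally): one must carefully track which generalized eigenspaces merge when $\alpha \in \Z$ and $\alpha < k$, show the top one has exactly the expected rank, and re-run the ``degree of the Sen polynomial is forced to be $2$ because non-de-Rham'' argument in the presence of a rank-$4$ generalized eigenspace, then extract the self-extension structure from the fact that $t^k D$ sits saturated inside it and is not Hodge--Tate. The inductive approach via $V_1 \otimes V_{k-1}$ mitigates most of this, since each step only invokes the already-proven Lemma~\ref{keylm}, but one still needs to check that the Sen weights of the intermediate summands $D_{(j, \alpha+k-1-j)}$ are in the range where Lemma~\ref{keylm}(1)/(2)/(3) applies cleanly — essentially the same inequality $n_1 \leq n_2 + \alpha_D$ appearing in the submodule propositions.
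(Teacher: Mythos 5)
Your proposal is correct and follows essentially the same route as the paper: reduce to the $k=1$ case (Lemma \ref{keylm}) by induction through $V_1^{\otimes k}$ (resp.\ $V_{k-1}\otimes_E V_1$), analyse eigenvalue collisions of $\fc$, use Lemma \ref{inj} and the Sen-polynomial divisibility to locate each eigenspace inside $D$, and get non-splitness in (3) from the saturated non-Hodge--Tate submodule $t^kD$. The one place where your sketch is thinner than the paper is the identification of the \emph{middle} summands $(D\otimes_E V_k)[\fc=(\alpha+k-2i)^2-1]\cong D_{(i,\alpha+k-i)}$ for $0<i<k$ in case (1) when $D$ is de Rham with $\alpha_D\geq k$: there a submodule of $D$ of Sen weights $(i,\alpha+k-i)$ is \emph{not} unique (only $D_{(0,\alpha+n_2)}$ and $D_{(n_1,\alpha)}$ are), and ``collecting terms'' only yields that the summand sits inside $D_{(i,\alpha+k-i)}^{\oplus 2}$; the paper resolves this with the Clebsch--Gordan embedding $V_{k-2}\otimes_E(\wedge^2V_1)\hookrightarrow V_{k-1}\otimes_EV_1\twoheadrightarrow V_k$ and the induction hypothesis at $k-2$ to compute the complementary summand, a step you would need to supply.
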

\begin{proof}
	By Lemma \ref{keylm} (and the proof) and an easy induction argument (see also Remark \ref{remind} (2)), we have
	\begin{equation}\label{decomk}
		D \otimes_EV_1^{\otimes k} \cong \sum_{i=0}^k (D\otimes_E V_1^{\otimes k})\{\fc=(\alpha+k-2i)^2-1\}.
	\end{equation}
And if $\alpha\notin \Z$ or $\alpha\in \Z$, $\alpha \geq k$, by Lemma \ref{keylm} (1) and induction, we have (where the factors in the direct sum can have rank bigger than $2$)
	\begin{equation}\label{decom2}
	D \otimes_EV_1^{\otimes k} \cong \oplus_{i=0}^k (D\otimes_E V_1^{\otimes k})[\fc=(\alpha+k-2i)^2-1].
\end{equation}
The first isomorphism in (1) follows. By Lemma \ref{casmirk}, $(D\otimes_E V_k)[\fc=(\alpha+k-2i)^2-1]$ is a $(\varphi, \Gamma)$-submodule of $D$ of rank  at most $2$.  By similar arguments as in the proof of Lemma \ref{keylm}, the Sen polynomial of $(D\otimes_E V_k)[\fc=(\alpha+k-2i)^2-1]$ divides $(X-i)(X-(\alpha+k-i))$. As the Sen weights of $D\otimes_E V_k$ are given by $(0, \cdots, k, \alpha, \cdots, \alpha+k)$, by comparing the weights, we see the Sen weights of $(D\otimes_E V_k)[\fc=(\alpha+k-2i)^2-1]$ are exactly $(i, \alpha+k-i)$. It rests to show 
\begin{equation}\label{compar0}(D\otimes_E V_k)[\fc=(\alpha+k-2i)^2-1]\cong D_{(i,\alpha+k-i)}.
\end{equation} The $k=1$ case was proved in Lemma \ref{keylm} (1). Assume $k\geq 2$. We use induction and assume hence  (1) holds for $k'<k$.  For $i=0$ (resp. $i=k$), (\ref{compar0}) holds as $D_{(0,\alpha+k)}$ (resp. $D_{(k,\alpha)}$) is the unique submodule of $D$ of Sen weights $(0, \alpha+k)$ (resp. $(k, \alpha)$).  Assume $1 \leq i \leq k-1$, it  is easy to see $(D \otimes_E V_k)[\fc=(\alpha+k-2i)^2-1]$ is a direct summand of $((D\otimes_E V_{k-1}) \otimes_E V_1)[\fc=(\alpha+k-2i)^2-1]$. As (1) holds fo $k-1$, it is not difficult to see the latter is isomorphic to 
\begin{equation}\label{ind11}
	(D_{(i,\alpha+k-1-i)} \otimes_E V_1)[\fc=(\alpha+k-2i)^2-1] \oplus (D_{(i-1,\alpha+k-i)} \otimes_E V_1)[\fc=(\alpha+k-2i)^2-1]\cong D_{(i,\alpha+k-i)}^{\oplus 2}.
\end{equation}
By Clebsh-Gordan rule,  we have $V_{k-2} \otimes_E (\wedge^2 V_1) \hookrightarrow V_{k-1} \otimes_EV_1 \twoheadrightarrow V_k$ and the composition is zero. By the induction hypothesis for $k-2$, we have $$(D \otimes_E V_{k-2} \otimes_E (\wedge^2 V_1))[\fc=(\alpha+k-2i)^2-1]\cong t D_{(i-1, \alpha+k-1-i)}\cong D_{(i,\alpha+k-i)}.$$
Together with (\ref{ind11}) and the fact $(D \otimes_E V_k)[\fc=(\alpha+k-2i)^2-1]$ has Sen weights $(i,\alpha+k-i)$, it is not difficult to deduce (\ref{compar0}). This finishes the proof of (1).

(2) By (\ref{decomk}), one can easily show that $(D\otimes_E V_1^{\otimes i})\{\fc=(\alpha+k)^2-1\}= 0$ for $i<k$ (note in this case $\alpha\notin \Z_{<0}$). Using Lemma \ref{keylm} (1) and induction, we also have  $(D\otimes_E V_1^{\otimes k})\{\fc=(\alpha+k)^2-1\}=(D\otimes_EV_1^{\otimes k})[\fc=(\alpha+k)^2-1]\cong D_{(0,\alpha+k)}$. (2) follows.

(3) By (\ref{decomk}),  $(D \otimes_E V_1^{\otimes i})\{\fc=k^2-1\}=0$ for $i<k$. It suffices to show the same statement with $V_k$ replaced by $V_1^{\otimes k}$. By Lemma \ref{inj} and an induction argument using Lemma \ref{keylm} (1), we get $(D\otimes_E V_1^{\otimes k})[\fc=k^2-1]\cong D_{(0,k)}$.
By Lemma \ref{keylm} (2) and induction,  it is not difficult to see $(D \otimes_E V_1^{\otimes k})\{\fc=k^2-1\}$ is a self-extension of $(D\otimes_E V_1^{\otimes k})[\fc=k^2-1]\cong D_{(0,k)}$. We see the statement in (3) except the non-split property holds.  If the extension splits, the multiplicity of $(T-k)$ in the Sen polynomial of $D \otimes_E V_k$ is one (noting $k$ is not a Sen weight of $(D\otimes_E V_k)/ ((D\otimes_EV_k)\{\fc=k^2-1\})$ by (\ref{decomk}) and the discussion in the first paragraph), however the saturated $(\varphi, \Gamma)$-submodule $t^k D$ of $D \otimes_E V_k$ is not Hodge-Tate, a contradiction.

(4) Again by (\ref{decomk}), if $\alpha\notin \Z_{<0}$,  $(D\otimes_E V_1^{ \otimes i})\{\fc=(\alpha+k)^2-1\}= 0$ for $i<k$, hence it suffices to prove the same statement for $V_1^{\otimes k}$. By Lemma \ref{inj} and  Lemma \ref{keylm} (3) with an induction argument, we have $(D\otimes_E V_1^{\otimes k})[\fc=(\alpha+k)^2-1]\cong D$. Assume now $\alpha=0$, by Lemma \ref{keylm} (3), we have an exact sequence (which splits as $(\varphi, \Gamma)$-module)
\begin{equation*}
	0 \ra (D \otimes_E V_1^{\otimes (k-1)})\{\fc=k^2-1\}  \ra (D\otimes_E V_1^{\otimes k})\{\fc=k^2-1\} \ra (tD \otimes_E V_1^{\otimes (k-1)})\{\fc=k^2-1\} \ra 0,
\end{equation*}
where the $\gl_2$-action on $D$ in the left term (resp. on $tD$ in the right term) fits into Lemma \ref{keylm} (3)(a) for $\alpha=1$ (resp. for $\alpha=-1$, after an appropriate twist).  By (\ref{decomk}) and an induction argument using (\ref{keydeo}), we have 
$(D \otimes_E V_1^{\otimes (k-1)})\{\fc=k^2-1\} \cong (D \otimes_E V_1^{\otimes (k-1)})[\fc=k^2-1]\cong D$, and  $(tD \otimes_E V_1^{\otimes (k-1)})\{\fc=k^2-1\}\cong (tD \otimes_E V_1^{\otimes (k-1)})[\fc=(-k)^2-1]$ is a $(\varphi, \Gamma)$-submodule of rank $2$ of $tD$, and has Sen polynomial dividing $T(T-k)$ (by similar arguments as in the proof of Lemma \ref{keylm}). As $(D\otimes_E V_1^{\otimes k})\{\fc=k^2-1\} \cong (D\otimes_E V_k)\{\fc=k^2-1\}$ is saturated in $D \otimes_E V_k$, we easily deduce $(tD \otimes_E V_1^{\otimes (k-1)})[\fc=k^2-1]$ has constant Sen weight $k$, hence is isomorphic to $t^k D$. This concludes the proof. 
\end{proof}
\begin{remark}\label{rem221}
We will frequently use the following special case:	
\begin{equation}\label{wtk}(D\otimes_E V_k)[\fc=(\alpha+k)^2-1] \cong \begin{cases}
		D & \alpha=0 \text{ and $D$ is de Rham}  \\ D_{(0, \alpha+k)} &\text{ otherwise}
	\end{cases}.
\end{equation}
Note that by induction, we also have
\begin{multline}\label{ind2}
(D\otimes_E V_k)[\fc=(\alpha+k)^2-1]\cong (D\otimes_E V_1^{\otimes k})[\fc=(\alpha+k)^2-1]\\
\cong \big(( D\otimes_E V_1^{\otimes (k-1)})[\fc=(\alpha+k-1)^2-1]\otimes_EV_1\big)[\fc=(\alpha+k)^2-1].
\end{multline}
From this and (\ref{inf0}), we have the following uniform description of $(D\otimes_E V_k)[\fc=(\alpha+k)^2-1]$ as submodule of $D$ (which can also be directly deduced from (\ref{wtk}))
\begin{equation*}
(D\otimes_E V_k)[\fc=(\alpha+k)^2-1]=\{x\in D\ |\ \nabla_i(x)\in t^i D, \ \forall i=1, \cdots, k\}
\end{equation*}
where $\nabla_i:=(\nabla-i+1)\cdots (\nabla-1) \nabla$. 
\end{remark}

Finally we quickly discuss the translation on general $p$-adic differential equations, where everything is essentially the same as the rank two case. Let $\Delta$ be a de Rham  $(\varphi, \Gamma)$-module of  constant Hodge-Tate weight $0$. For $\alpha\in E$, by \cite[Prop.~3.6]{Colm16}, we equip  $\nabla$ with a $\gl_2$-action extending the natural $\fp^+$-action such that $\fz=\alpha-1$ and $u^-=-\frac{\nabla(\nabla-\alpha)}{t}$ (so $\fc=\alpha^2-1$).  Note that Lemma \ref{inj} still holds with $D$ replaced by $\nabla$. 
\begin{proposition}\label{decork1}
(1) $\Delta \otimes_E V_k\cong \oplus_{i=0}^k t^i \Delta$. 	
	
(2)	If $\alpha \notin \Z_{<0}$, $(\Delta \otimes_E V_k)[\fc=(\alpha+k)^2-1]\cong \Delta $.

(3) If $\alpha=0$, $(\Delta \otimes_E V_k)\{\fc=k^2-1\}\cong \Delta \oplus t^k \Delta$. 
\end{proposition}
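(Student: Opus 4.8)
The plan is to transcribe the proofs of Lemma~\ref{keylm} and Proposition~\ref{transProp}, the essential simplification being that a de Rham $(\varphi,\Gamma)$-module $\Delta$ of constant Hodge--Tate weight $0$ has vanishing Sen operator, i.e. $\nabla\Delta\subset t\Delta$, and hence $\nabla_i\Delta\subset t^i\Delta$ for every $i$ (where $\nabla_i=(\nabla-i+1)\cdots\nabla$). Consequently the $t$-divisibility conditions appearing in (\ref{inf0}) and in the uniform description of Remark~\ref{rem221} become vacuous, so the $\fc$-eigenspace computations now produce full-rank direct summands isomorphic to $\Delta$ rather than proper submodules of it. Two further facts will be used freely: Lemma~\ref{inj} holds with $D$ replaced by $\Delta$ (as remarked before the statement), and $\Delta\otimes_E\wedge^2V_1\cong t\Delta$ as $(\varphi,\Gamma)$-modules over $\cR_E$ (as $\cR_E\otimes_E\wedge^2V_1\cong t\cR_E$), whence $t^j\Delta\otimes_E\wedge^2V_1\cong t^{j+1}\Delta$.

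\emph{The case $k=1$.} Running the computation of Lemma~\ref{keylm}(3) with $e_0=e$, $e_1=te$ verbatim, the equation $\fc v=\mu v$ forces $\mu=(\alpha\pm1)^2-1$ and gives, as in (\ref{inf0}),
\begin{equation*}
(\Delta\otimes_E V_1)[\fc=(\alpha\pm1)^2-1]=\bigl\{\,v_0\otimes e_0+\tfrac{\nabla-\alpha/2\pm\alpha/2}{t}v_0\otimes e_1\ \bigm|\ v_0\in\Delta,\ (\nabla-\alpha/2\pm\alpha/2)v_0\in t\Delta\,\bigr\}.
\end{equation*}
For the $+$ sign the constraint reads $\nabla v_0\in t\Delta$ and is automatic, so the composite $(\Delta\otimes_E V_1)[\fc=(\alpha+1)^2-1]\hookrightarrow\Delta\otimes_E V_1\xrightarrow{(\ref{projna})}\Delta$ is an isomorphism of $(\varphi,\Gamma)$-modules; this is~(2) for $k=1$. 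Since the kernel of (\ref{projna}) equals $\Delta\otimes_E(X\cR_E^+/X^2)\cong t\Delta$ and the above eigenspace maps isomorphically onto the quotient $\Delta$, we get $\Delta\otimes_E V_1\cong\Delta\oplus t\Delta$, which is~(1) for $k=1$; and when $\alpha=0$ a direct check (as in Proposition~\ref{transProp}(3)) that $\fc$ is $2$-step unipotent on $\Delta\otimes_E V_1$ yields~(3) for $k=1$. (When $\alpha\neq0$ the $-$ sign instead forces $v_0\in t\Delta$, so the second eigenspace is $\cong t\Delta$ and the decomposition is even $\gl_2$-equivariant.)

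\emph{Induction on $k$.} For~(1) I would induct directly: by the uniform description (as in Remark~\ref{rem221}, now with no divisibility constraint) the composite $(\Delta\otimes_E V_k)[\fc=(\alpha+k)^2-1]\hookrightarrow\Delta\otimes_E V_k\xrightarrow{(\ref{projna})}\Delta$ is an isomorphism, so $\Delta\otimes_E V_k\cong\Delta\oplus K$ with $K$ the kernel of (\ref{projna}); and $K=\Delta\otimes_E(X\cR_E^+/X^{k+1})\cong\Delta\otimes_E V_{k-1}\otimes_E\wedge^2V_1\cong(t\Delta)\otimes_E V_{k-1}$, so applying the inductive statement~(1) to $t\Delta$ (a twist of $\Delta$, handled by the same statement) gives $K\cong\bigoplus_{j=1}^k t^j\Delta$, hence $\Delta\otimes_E V_k\cong\bigoplus_{j=0}^k t^j\Delta$. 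Parts~(2) and~(3) then follow as in the proof of Proposition~\ref{transProp}: decompose $\Delta\otimes_E V_1^{\otimes k}$ into $\fc$-generalized eigenspaces, use that $V_k$ is a $(\varphi,\Gamma)$-equivariant $\cR_E^+$-direct summand of $V_1^{\otimes k}$, and identify each piece $(\Delta\otimes_E V_k)[\fc=(\alpha+k-2i)^2-1]$ with $t^i\Delta$ via (\ref{ind2}), the $k=1$ case, Lemma~\ref{inj}, and Proposition~\ref{lieaction} (to track the $\gl_2$-parameters, equivalently the $\wedge^2V_1$-twists). For~(3) with $\alpha=0$, the eigenvalues $(k-2i)^2-1$ coincide precisely for $i=0$ and $i=k$, so $(\Delta\otimes_E V_k)\{\fc=k^2-1\}$ is an extension of the $i=k$ piece by the $i=0$ piece, split as $(\varphi,\Gamma)$-modules, with $(\fc-k^2+1)^2=0$; the $i=0$ piece is $\cong\Delta$ as above, and the $i=k$ piece is $\cong t^k\Delta$ because it is forced to have constant Sen weight $k$ --- and here one uses, exactly as in the proof of Proposition~\ref{transProp}(4), that $(\Delta\otimes_E V_k)\{\fc=k^2-1\}$ is saturated in $\Delta\otimes_E V_k$ while $t^k\Delta$ is saturated and not Hodge--Tate of weight $0$.

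The step I expect to require the most care is this last saturation argument for part~(3), together with the bookkeeping of the $\wedge^2V_1$-twists (equivalently of $\gl_2$-parameters) when feeding $t^j\Delta$ back into the induction; everything else is a direct transcription of the rank-two arguments once $\nabla\Delta\subset t\Delta$ is in hand.
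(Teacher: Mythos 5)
Your proof is correct. For parts (2) and (3) it follows the paper's own route (transcribe Lemma \ref{keylm} (3) and Proposition \ref{transProp} (4), the divisibility conditions coming from (\ref{inf0}) being vacuous because $\nabla \Delta \subset t\Delta$, hence $\nabla_i\Delta\subset t^i\Delta$). For part (1), however, you argue genuinely differently. The paper exploits the freedom in the choice of the $\gl_2$-parameter: taking $\alpha\notin\Z$ makes the $k+1$ Casimir eigenvalues $(\alpha+k-2i)^2-1$ pairwise distinct, so $\Delta\otimes_EV_k$ is the direct sum of honest eigenspaces, and each summand is then pinned down as $t^i\Delta$ by combining the injectivity of Lemma \ref{inj} with a count of Sen weights. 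You instead peel off one factor at a time: the top eigenspace maps isomorphically onto the quotient $\Delta$ under (\ref{projna}) (again because the divisibility condition is vacuous), and the kernel $\Delta\otimes_E(X\cR_E^+/X^{k+1})$ is identified with $(t\Delta)\otimes_EV_{k-1}$, to which the induction hypothesis applies. Your version works for any admissible $\alpha$ and avoids the Sen-weight bookkeeping, at the price of the identification $X\cR_E^+/X^{k+1}\cong V_{k-1}\otimes_E\wedge^2V_1$ of $(\varphi,\Gamma)$-equivariant $\cR_E^+$-modules, which you assert rather than verify; it does hold (multiplication by $t$ realizes it, since $t/X$ is a unit in $\cR_E^+/X^k$), so this is a harmless elision, consistent with the paper's own implicit use of $D\otimes_E\wedge^2V_1\cong tD$ in the proof of Proposition \ref{transProp}. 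Both arguments are sound.
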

\begin{proof}
(1)	 We consider the case where $\Delta$ is equipped with the above $\gl_2$-action with $\alpha \notin \Z$.  By similar argument in the proof of Lemma \ref{keylm} and induction, we have a similar decomposition as in (\ref{decom2}) for $\Delta$ (which holds with $V_1^{\otimes k}$ replaced by $V_k$). By considering the Sen weights, the Sen weights of $(\Delta \otimes_E V_k)[(\alpha+k-2i)^2-1]$ has to be the constant $i$. Similarly as in Lemma \ref{inj}, $(\Delta \otimes_E V_k)[(\alpha+k-2i)^2-1]$ is a submodule of $\Delta$, hence is isomorphic to $t^i  \Delta$.
	
(2) (3) follows by similar argument as for Lemma \ref{keylm} (3) and Proposition \ref{transProp} (4). 
\end{proof}
\begin{remark}
	For a general de Rham $(\varphi, \Gamma)$-module  $D$, let $\DF:=D_{\pst}(D)$ (ignoring the Hodge filtration) be the Deligne-Fontaine module  associated to $D$.  By Proposition \ref{decork1} (1), the Deligne-Fontaine module associated to $D \otimes_EV_k$ is isomorphic to $\DF^{\oplus k+1}$.
\end{remark}
\section{Locally analytic representations of $\GL_2(\Q_p)$}
We show the compatibility of the translations on $(\varphi, \Gamma)$-modules and the translations on $\GL_2(\Q_p)$-representations.
\subsection{Translations of $\GL_2(\Q_p)$-sheaves on $\bP^1(\Q_p)$}
Let $\sF$ be a $\GL_2(\Q_p)$-sheaf of analytic type over $\bP^1(\Q_p)$ (cf. \cite[\S~1.3.1]{Colm16}).  For $k\geq 1$, the following data defines a $\GL_2(\Q_p)$-sheaf, denoted by $\sF \otimes_E V_k$, of analytic type over $\bP^1(\Q_p)$: for compact opens $U$, $V$ of $\bP^1(\Q_p)$, 
\begin{itemize}
	\item $(\sF \otimes_E V_k)(U)=\sF(U) \otimes_E V_k$,
\item $\Res^V_U|_{\sF\otimes_E V_k} \cong \Res^V_U|_{\sF} \otimes \id$,
\item $g_U|_{(\sF \otimes_E V_k)(U)}=g_U|_{\sF(U)} \otimes g$ for $g\in \GL_2(\Q_p)$.
\end{itemize}
Note that $w=\begin{pmatrix} 0 & 1 \\ 1 & 0 \end{pmatrix}$ induces an isomorphism $M_{\sF}^+:=\sF(\Z_p) \xrightarrow{\sim} \sF(\bP^1(\Q_p) \setminus \Z_p)$. It is clear that $M_{\sF}^+$ gives rise to a $P^+$-sheaf of analytic type over $\Z_p$, and $w$ induces an involution on $\Res_{\Z_p^{\times}}(M_{\sF}^+)$. We have $\sF(\bP^1(\Q_p))=\{(x,y)\in M_{\sF}^+ \times M_{\sF}^+\ |\ w(\Res_{\Z_p^{\times}}(x))=\Res_{\Z_p^{\times}}(y)\}$. We refer to \cite[\S~3.1.1]{Colm16} for more discussion on the relation of $\GL_2(\Q_p)$-sheaves and $P^+$-sheaves. Finally remark that the involution $w$ on $\Res_{\Z_p^{\times}}(M_{\sF \otimes_E V_k}^+)\cong \Res_{\Z_p^{\times}}(M_{\sF}^+) \otimes_E V_k$ is given by the diagonal action of $w$.

For $\mu\in E$, define $\sF[\fc=\mu]$ (resp. $\sF\{\fc=\mu\}$)  to be the subsheaf of ($\fc=\mu$)-eigenspace (resp. generalized ($\fc=\mu$)-eigenspace). It is clear that  these are 
$\GL_2(\Q_p)$-subsheaves of $\sF$ over $\bP_1(\Q_p)$.

Let $D$ be a  $(\varphi, \Gamma)$-module over $\cR_E$. Assume there is an involution $w$  on $D^{\psi=0}=D \boxtimes \Z_p^*$. Let $\delta: \Q_p^{\times} \ra E^{\times}$ be a continuous character. Assume that $(D, \delta, w)$ is compatible in the sense of \cite[\S~3.1.2]{Colm16}. Let $\sG_{D, \delta,w}$ be the associated $\GL_2(\Q_p)$-sheaf of analytic type over $\bP^1(\Q_p)$. We will frequently use Colmez's notation $D \boxtimes_{\delta,w} U:=\sG_{D,\delta,w}(U)$ for $U\subset \bP^1(\Q_p)$.

Let $w_k:=w \otimes \begin{pmatrix} 0 &1 \\ 1 &0
\end{pmatrix}$, which is an involution on $(D\otimes_EV_k) \boxtimes \Z_p^{\times} \cong (D\boxtimes \Z_p^{\times}) \otimes_E V_k$. 
\begin{proposition}\label{tranSh}If $(D, \delta, w)$ is compatible, then
	 $(D\otimes_E V_k, z^k \delta, w_k$) is compatible, and there is a natural isomorphism of $\GL_2(\Q_p)$-sheaves over $\bP^1(\Q_p)$:
	 \begin{equation*}
	 	 \sG_{D\otimes_E V_k, \delta z^k,w_k} \xrightarrow{\sim} \sG_{D,\delta,w} \otimes_E V_k.
	 \end{equation*}
 In particular, we have a $\GL_2(\Q_p)$-equivariant isomorphism $(D\otimes_E V_k) \boxtimes_{z^k \delta, w_k} \bP^1(\Q_p) \cong (D\boxtimes_{\delta, w} \bP^1(\Q_p)) \otimes_E V_k$.
\end{proposition}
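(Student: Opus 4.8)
The plan is to reduce the statement about $\GL_2(\Q_p)$-sheaves on $\bP^1(\Q_p)$ to the already-established facts about $P^+$-sheaves on $\Z_p$ (Lemma \ref{transP+} and its corollary), and to the module-theoretic constructions of Section 2. Recall from the discussion preceding the proposition that a $\GL_2(\Q_p)$-sheaf $\sF$ of analytic type over $\bP^1(\Q_p)$ is equivalent data to a $P^+$-sheaf $M_\sF^+ = \sF(\Z_p)$ of analytic type over $\Z_p$ together with an involution $w$ on $\Res_{\Z_p^\times}(M_\sF^+)$ satisfying the compatibility conditions of \cite[\S~3.1.2]{Colm16}; global sections are then recovered as the fibre product $\{(x,y) \in M_\sF^+ \times M_\sF^+ \mid w(\Res_{\Z_p^\times}(x)) = \Res_{\Z_p^\times}(y)\}$. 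So the first step is: for the $P^+$-sheaf $M_{\sG_{D,\delta,w}}^+ = \sF_D$ twisted by $\delta$, apply Lemma \ref{transP+} to get a $P^+$-equivariant identification $\sF_D \otimes_E V_k \cong \sF_{D \otimes_E V_k}$, and observe that the involution $w_k = w \otimes \begin{pmatrix} 0 & 1 \\ 1 & 0 \end{pmatrix}$ on $(D \otimes_E V_k) \boxtimes \Z_p^\times \cong (D \boxtimes \Z_p^\times) \otimes_E V_k$ is, by construction, exactly the involution attached to the sheaf $\sG_{D,\delta,w} \otimes_E V_k$ (whose involution is the diagonal action of $w$, as remarked in the text).

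The second step is to check the compatibility of the triple $(D \otimes_E V_k, z^k\delta, w_k)$ in the sense of \cite[\S~3.1.2]{Colm16}. The central character of the sheaf $\sG_{D,\delta,w} \otimes_E V_k$ is $\delta$ times the central character of $V_k$, and since $\begin{pmatrix} p & 0 \\ 0 & p \end{pmatrix}$ acts on $V_k$ through $\varphi \circ \varphi$-type scaling giving weight $z^k$ on the relevant normalization, one gets central character $z^k\delta$; I would spell this out from the explicit $P^+$-action on $V_k$ recalled at the start of \S~2.1 ($\varphi(v) = \begin{pmatrix} p & 0 \\ 0 & 1 \end{pmatrix} v$, etc.). The compatibility axioms of loc. cit. are conditions relating $w$, $\varphi$, $\psi$, $\Gamma$ and $\delta$ on $D^{\psi=0}$; since all the structure on $D \otimes_E V_k$ is diagonal and $w_k$ is diagonal, each axiom for $(D \otimes_E V_k, z^k\delta, w_k)$ follows by tensoring the corresponding axiom for $(D,\delta,w)$ with the identity on $V_k$ (using Lemma \ref{psi} to control how $\psi$ interacts with the $V_k$-factor, and the fact that $w$ on $V_k$ is just the matrix $\begin{pmatrix} 0 & 1 \\ 1 & 0 \end{pmatrix}$ acting algebraically). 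This is essentially a bookkeeping verification.

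The third step assembles the isomorphism of $\GL_2(\Q_p)$-sheaves. Having matched the $P^+$-sheaves over $\Z_p$ and the involutions over $\Z_p^\times$ via the identity map on the underlying spaces, the general reconstruction procedure for $\GL_2(\Q_p)$-sheaves from $(M^+, w)$-data (\cite[\S~3.1.1]{Colm16}) produces a canonical isomorphism $\sG_{D \otimes_E V_k, z^k\delta, w_k} \xrightarrow{\sim} \sG_{D,\delta,w} \otimes_E V_k$ of sheaves over $\bP^1(\Q_p)$; $\GL_2(\Q_p)$-equivariance holds because on the $\Z_p$-chart it is $P^+$-equivariant (Lemma \ref{transP+}), the two charts are swapped compatibly by $w$ on both sides, and $P^+$ together with $w$ generate $\GL_2(\Q_p)$. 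Taking global sections and using the fibre-product description gives the final assertion $(D \otimes_E V_k) \boxtimes_{z^k\delta, w_k} \bP^1(\Q_p) \cong (D \boxtimes_{\delta,w} \bP^1(\Q_p)) \otimes_E V_k$.

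The main obstacle I anticipate is not conceptual but the careful verification in Step 2 that each clause of Colmez's compatibility definition survives the tensor-with-$V_k$ operation with the correct character twist $z^k\delta$ rather than, say, $\delta$ alone or a different power — in particular getting the normalization of the central character right, and checking the clause involving $w$ and the action of $\begin{pmatrix} 1 & 0 \\ 0 & p \end{pmatrix}$ (or the $\sigma_a$'s), since $w$ does not act by a simple formula on all of $D^{\psi=0}$ but $w_k$ mixes it with the algebraic $w$-action on $V_k$; one must confirm these interact correctly under $\Res_{\Z_p^\times}$. Once those identities are in hand, everything else is formal transport of structure along the identity map.
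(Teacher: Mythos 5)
Your proposal is correct and follows the same overall route as the paper: identify the $P^+$-sheaf over $\Z_p$ via Lemma \ref{transP+}, match the involutions on the sections over $\Z_p^{\times}$, and reconstruct the $\GL_2(\Q_p)$-sheaf from the pair $(M^+,w)$. The one place where the paper proceeds differently --- and where it dissolves the obstacle you flag at the end --- is your Step 2. The paper does not verify Colmez's compatibility clauses for $(D\otimes_E V_k, z^k\delta, w_k)$ one by one. Instead it uses the raw data only to build, via the formulas of \cite[\S~3.1.1]{Colm16}, a sheaf $\sG'$ carrying an action of the auxiliary group $\tilde{G}$ of \cite[Rem.~3.1]{Colm16}, for which no nontrivial group relations need to be checked. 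The identity map is then a $\tilde{G}$-equivariant isomorphism $\sG_{D,\delta,w}\otimes_E V_k \xrightarrow{\sim} \sG'$, and since the left-hand side is an honest $\GL_2(\Q_p)$-sheaf ($V_k$ being an honest representation and $\sG_{D,\delta,w}$ an honest $\GL_2(\Q_p)$-sheaf), the $\tilde{G}$-action on $\sG'$ automatically factors through $\GL_2(\Q_p)$ --- which is precisely the compatibility of the triple. So the ``careful verification'' you anticipate, in particular the clause involving $w$ and the lower unipotent matrices, reduces to the observation that the diagonal of two genuine group actions satisfies the group relations; your clause-by-clause check would presumably also succeed, but it is work the paper's argument makes unnecessary.
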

\begin{proof}
	From the data $(D\otimes_E V_k, z^k \delta, w_k)$, we can construct a sheaf $\sG'$ over $\bP^1(\Q_p)$ as in \cite[\S~3.1.1]{Colm16} with $\sG'(\bP^1(\Q_p))=\{(x,y) \in (D\otimes_E V_k) \times (D\otimes_EV_k)  \ |\ w_k(\Res_{\Z_p^{\times}}(x))=\Res_{\Z_p^{\times}}(y)\}$, which is equipped with an action of the group $\tilde{G}$ in \cite[Rem. 3.1]{Colm16} using the formulas in \cite[\S~3.1.1]{Colm16}. It is then straightforward to check $(\sG_{D,\delta} \otimes_E V_k) (\bP^1(\Q_p)) \ra  \sG'(\bP^1(\Q_p))$, $(x,y) \mapsto (x,y)$ induces an isomorphism of sheaves over $\bP^1(\Q_p)$, which is equivariant under the $\tilde{G}$-action. As $\sG_{D,\delta} \otimes_E V_k$ is a $\GL_2(\Q_p)$-sheaf, the $\tilde{G}$-action on $\sG'$ factors through $\GL_2(\Q_p)$. The proposition follows.
\end{proof}

\begin{corollary}\label{faistran}Suppose $(D, \delta, w)$ is compatible. 
Let $\mu\in E$ such that $(D\otimes_E V_k)[\fc=\mu]\neq 0$. Then $((D\otimes_E V_k)[\fc=\mu], z^k \delta, w_k)$ and $((D\otimes_E V_k)\{\fc=\mu\}, z^k \delta, w_k)$ are compatible. And we have natural isomorphisms of $\GL_2(\Q_p)$-sheaves over $\bP^1(\Q_p)$:
\begin{equation*}
\sG_{D\otimes_E V_k[\fc=\mu], \delta z^k,w_k} \xrightarrow{\sim} (\sG_{D,\delta,w} \otimes_E V_k)[\fc=\mu], \ \ \sG_{D\otimes_E V_k\{\fc=\mu\}, \delta z^k, w_k} \xrightarrow{\sim} (\sG_{D,\delta, w} \otimes_E V_k)\{\fc=\mu\}.
\end{equation*}
In particular, we have $\GL_2(\Q_p)$-equivariant isomorphisms
\begin{eqnarray*}
&&	(D\otimes_E V_k)[\fc=\mu] \boxtimes_{z^k \delta, w_k} \bP^1(\Q_p) \cong (D \boxtimes_{\delta, w} \bP^1(\Q_p))[\fc=\mu], \\ && (D\otimes_E V_k)\{\fc=\mu\} \boxtimes_{z^k \delta, w_k} \bP^1(\Q_p) \cong (D \boxtimes_{\delta, w} \bP^1(\Q_p))\{\fc=\mu\}.
\end{eqnarray*}
\end{corollary}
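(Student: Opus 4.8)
The plan is to deduce Corollary \ref{faistran} from Proposition \ref{tranSh} by decomposing along generalized eigenspaces of the Casimir operator. First I would observe that $\fc\in \text{U}(\gl_2)$ acts on the $\GL_2(\Q_p)$-sheaf $\sF:=\sG_{D,\delta,w}\otimes_E V_k$ (via the diagonal $\gl_2$-action), commuting with all the transition maps $\Res^V_U$ and with the $\GL_2(\Q_p)$-action; hence for each $\mu\in E$ the subsheaves $\sF[\fc=\mu]$ and $\sF\{\fc=\mu\}$ are well-defined $\GL_2(\Q_p)$-subsheaves of analytic type over $\bP^1(\Q_p)$, as already noted in the text. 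By Proposition \ref{tranSh} we have a $\GL_2(\Q_p)$-equivariant identification $\sG_{D\otimes_E V_k,\delta z^k,w_k}\xrightarrow{\sim}\sF$, and in particular an identification of the associated $P^+$-sheaves $M^+$ over $\Z_p$, namely $\sF_{D\otimes_E V_k}\cong\sF_D\otimes_E V_k$ by Lemma \ref{transP+}.

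Next I would use the decomposition of $D\otimes_E V_k$ into generalized $\fc$-eigenspaces established in \S~\ref{sec20}: since $\fc$ is an endomorphism of the $(\varphi,\Gamma)$-module $D\otimes_E V_k$ over $\cR_E$, each $(D\otimes_E V_k)\{\fc=\mu\}$ is a saturated $(\varphi,\Gamma)$-submodule, and $D\otimes_E V_k=\bigoplus_\mu (D\otimes_E V_k)\{\fc=\mu\}$. Passing to $\psi=0$ parts and using $(D\otimes_E V_k)^{\psi=0}=D^{\psi=0}\otimes_E V_k$ together with the fact that $\psi$ (hence $D^{\psi=0}$) is $\gl_2$-stable in the appropriate sense, this decomposition is respected by $\Res_{\Z_p^{\times}}$ and by the involution $w_k=w\otimes\left(\begin{smallmatrix}0&1\\1&0\end{smallmatrix}\right)$ — this is because $w_k$ is built from the diagonal $\gl_2$-data and commutes with $\fc$. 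Therefore the compatibility datum $(D\otimes_E V_k,\delta z^k,w_k)$ decomposes as a direct sum of the data $((D\otimes_E V_k)\{\fc=\mu\},\delta z^k,w_k)$; since a direct summand of a compatible triple is compatible (the compatibility conditions of \cite[\S~3.1.2]{Colm16} are closed under taking $\fc$-stable direct summands), each summand is compatible, and the sheaf $\sG$ construction commutes with this direct sum decomposition. This yields the first displayed isomorphism for generalized eigenspaces, and restricting further to the actual eigenspace $(D\otimes_E V_k)[\fc=\mu]$ — which is a $(\varphi,\Gamma)$- and $w_k$-stable submodule of $(D\otimes_E V_k)\{\fc=\mu\}$, cut out by the kernel of the nilpotent operator $\fc-\mu$ — gives the isomorphism for $[\fc=\mu]$ as a $\GL_2(\Q_p)$-subsheaf. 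Taking global sections over $\bP^1(\Q_p)$ and using Proposition \ref{tranSh} gives the last two displayed $\GL_2(\Q_p)$-equivariant isomorphisms.

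The main point requiring care — and the step I would expect to be the real content — is checking that the Casimir-eigenspace decomposition is compatible with the gluing data defining the $\GL_2(\Q_p)$-sheaf on all of $\bP^1(\Q_p)$, i.e. that $w_k$ preserves each generalized $\fc$-eigenspace of $(D\boxtimes\Z_p^{\times})\otimes_E V_k$ and that the compatibility conditions of \cite[\S~3.1.2]{Colm16} pass to these summands. This reduces to the observation that $w=\left(\begin{smallmatrix}0&1\\1&0\end{smallmatrix}\right)$ normalizes $\gl_2$ and acts on $\text{U}(\gl_2)$ fixing the Casimir $\fc$, so $w_k\circ\fc=\fc\circ w_k$ on $(D\boxtimes\Z_p^\times)\otimes_E V_k$; together with $\gl_2$-equivariance of the transition maps this forces the decomposition to be a decomposition of $\GL_2(\Q_p)$-sheaves, after which everything follows formally. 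Once this is in place the remaining verifications (that $\fc-\mu$ is nilpotent on $(D\otimes_E V_k)\{\fc=\mu\}$, so the eigenspace is a genuine subsheaf, and that $\sG_{-}$ is additive) are routine.
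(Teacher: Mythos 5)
Your proposal is correct and takes essentially the same route as the paper: both deduce the corollary from Proposition \ref{tranSh} by observing that the restriction maps and the involution $w_k$ (realized via that proposition as the action of the group element $w$ on sections over $\Z_p^{\times}$, so that $\Ad$-invariance of the Casimir applies) commute with $\fc$, whence the (generalized) eigenspace decomposition passes to the compatibility data and to the sheaves.
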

\begin{proof}
	By the above proposition, the involution $w_k$ comes from the $w$-action on $ \sG_{D\otimes_E V_k, \delta z^k,w_k}(\Z_p^{\times})$ hence commutes with $\fc$. We see in particular $w_k$ stabilizes  $(D\otimes_E V_k)[\fc=\mu] \boxtimes \Z_p^{\times}$ and $(D\otimes_EV_k)\{\fc=\mu\} \boxtimes \Z_p^{\times}$.  The restriction maps also commute with $\fc$, hence $(D\otimes_E V_k)[\fc=\mu]=(\sG_{D\otimes_E V_k, \delta z^k,w_k}[\fc=\mu])(\Z_p)$ (resp. $(D\otimes_E V_k)\{\fc=\mu\}=(\sG_{D\otimes_E V_k, \delta z^k,w_k}\{\fc=\mu\})(\Z_p)$). The corollary then follows by the same argument as in Proposition \ref{tranSh}.
\end{proof}

\subsection{$p$-adic local Langlands correspondence for $\GL_2(\Q_p)$}
Let $\delta_D:\Q_p^{\times} \ra E^{\times}$ be the character such that $\wedge^2 D\cong \cR_E(\delta_D\varepsilon)$. Recall that by \cite[Thm.~0.1]{Colm16}, if $D$ is indecomposable, there exists a unique involution $w_D$ such that $(D,\delta_D, w_D)$ is compatible. We briefly recall the construction and some properties of $w_D$.  

(1) When $D$ is irreducible, then there exist a continuous character $\chi$ and an \'etale $(\varphi, \Gamma)$-module $D_0$ such that $D \cong D_0 \otimes_{\cR_E} \cR_E(\chi)$. Let $\cD_0$ be the continuous \'etale $(\varphi, \Gamma)$-module over $B_{\Q_p} \otimes_{\Q_p} E$ associated to $D_0$. One defines first an involution $w_{\cD_0}$ on $\cD_0^{\psi=0}$ (see \cite[Rem.~II.1.3]{Colm10a}). Then the restriction of $w_{\cD_0}$ on $(\cD_0^{\dagger})^{\psi=0}$ extends uniquely to an involution $w_{D_0}$ on $D_0$ such that  $(D_0, \delta_{D_0}, w_{D_0})$ is compatible.(cf. \cite[\S~V.2]{Colm10a}). Let $w_D:=w_{D_0} \otimes \chi(-1)$. It is straightforward to check that  $(D, \delta_D, w_D)$ is also compatible and $D \boxtimes_{\delta_D, w_D} \bP^1(\Q_p)\cong (D_0 \boxtimes_{\delta_{D_0}, w_{D_0}} \bP^1(\Q_p))\otimes_E \chi\circ \dett$.

(2) When $D$ is a non-split extension of $\cR_E(\delta_2)$ by $\cR_E(\delta_1)$. On each $\cR_E(\delta_i)^{\psi=0}$, there is a unique involution $w_{i}$ such that $(\cR_E(\delta_i), \delta_i, w_{i})$ is compatible (cf. \cite[Rem.~3.8 (i), \S~4.3]{Colm16}), and there is an exact sequence (cf. \cite[Thm.~6.8]{Colm16}):
\begin{equation*}
0 \ra \cR_E(\delta_1) \boxtimes_{\delta_D, w_1} \bP^1(\Q_p) \ra D \boxtimes_{\delta_D, w_D} \bP^1(\Q_p) \ra \cR_E(\delta_2) \boxtimes_{\delta_D, w_2} \bP^1(\Q_p) \ra 0.
\end{equation*}

\begin{remark}
	
	(1) If $D$ contains a pathological submodule, i.e. up to twist $D$ is isomorphic to a non-de Rham extension $\cR_E- t^n \cR_E$ with $n\in \Z_{\geq 0}$, then by \cite[\S~6.5.1, 6.5.2]{Colm16}, the $\fc$-action on $D\boxtimes_{\delta_D, w_D} \bP^1(\Q_p)$ is not scalar. While for other cases, $\fc$ is scalar. 
	
	(2) Suppose $D$ does not have pathological submodules, and assume $D$ has Sen weights $(0,\alpha_D)$ with $\alpha_D\in E\setminus \Z_{<0}$. The $\gl_2$-action on $D$ induced from $D \boxtimes_{\delta_D, w_D} \bP^1(\Q_p)$ coincides with the one given in \S~\ref{sec22} with $\alpha=0$ when $\alpha_D=0$. 
\end{remark}

We write $D \boxtimes_{\delta_D} \bP^1(\Q_p):=D \boxtimes_{\delta_D, w_D} \bP^1(\Q_p)$. Recall that we have a $\GL_2(\Q_p)$-equivariant exact sequence (cf. \cite[Thm.~0.1]{Colm16})
\begin{equation}\label{pLL1}
	0 \ra \pi(D)^* \otimes_E \delta_D \circ \dett \ra D \boxtimes_{\delta_D} \bP^1(\Q_p) \ra \pi(D) \ra 0,
\end{equation}
where $\pi(D)$ is the locally analytic representation of $\GL_2(\Q_p)$ (of central character $\delta_D$) corresponding to $D$ in the $p$-adic local Langlands correspondence. Note that if $D'\cong D \otimes_{\cR_E}\cR_E(\chi)$, then $D'\boxtimes_{\delta_{D'}} \bP^1(\Q_p) \cong (D \boxtimes_{\delta_D} \bP^1(\Q_p) ) \otimes_E \chi \circ \dett$, hence  $\pi(D')\cong \pi(D) \otimes_E \chi \circ \dett$. 

\subsection{Change of weights}
Twisting $D$ by a continuous character, we assume $D$ has Sen weights $(0,\alpha_D)$ with $\alpha_D\in E \setminus \Z_{<0}$. Let $k\in \Z_{\geq 1}$. 
We deduce from (\ref{pLL1}) an exact sequence
\begin{equation*}
	0 \ra \pi(D)^* \otimes_E V_k \otimes_E \delta_D \circ \dett \ra (D \otimes_E V_k) \boxtimes_{\delta _Dz^k, w_{D,k}} \bP^1 \ra \pi(D) \otimes_E V_k \ra 0.
\end{equation*}
Let $\mu\in E$, we have two exact sequences:
\begin{equation*}
	0 \ra (\pi(D)^* \otimes_E V_k \otimes_E \delta_D)\{\fc=\mu\} \ra (D \otimes_E V_k)\{\fc=\mu\} \boxtimes_{\delta _Dz^k, w_{D,k}}  \bP^1 \ra (\pi(D) \otimes_E V_k)\{\fc=\mu\} \ra 0,
\end{equation*}
\begin{equation}\label{seqinf}
	0 \ra (\pi(D)^* \otimes_E V_k \otimes_E\delta_D)[\fc=\mu] \ra (D \otimes_E V_k)[\fc=\mu] \boxtimes_{\delta _Dz^k, w_{D,k}}  \bP^1 \ra (\pi(D) \otimes_E V_k)[\fc=\mu].
\end{equation}
\begin{theorem}\label{CoWthm1}Assume $D$ is indecomposible and $D$ does not have pathological submodules.

(1)	Assume $\alpha_D\notin \Z$ or $\alpha_D\in \Z$ and $\alpha\geq k$. For $i=0, \cdots, k$, $D_{(i,\alpha+k-i)} \boxtimes_{\delta _{D_{(i,\alpha+k-i)}}} \bP^1(\Q_p) \cong (D \boxtimes_{\delta_D} \bP^1(\Q_p))[\fc=(\alpha+k-2i)^2-1]$ and $\pi(D_{(i,\alpha+k-i)}) \cong (\pi(D) \otimes_E V_k)[\fc=(\alpha+k-2i)^2-1]$.

(2) Assume $\alpha_D\neq 0$ or $D$ not de Rham, then 
$D_{(0,\alpha+k)}\boxtimes_{\delta_{D_{(0,\alpha+k)}}} \bP^1(\Q_p) \cong (D \boxtimes_{\delta_D} \bP^1(\Q_p)[\fc=(\alpha+k)^2-1]$ and $\pi(D_{(0,\alpha+k)})\cong (\pi(D) \otimes_E V_k)[\fc=(\alpha+k)^2-1]$.
\end{theorem}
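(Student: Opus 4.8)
The plan is to deduce Theorem \ref{CoWthm1} by combining the structural results on translations of $(\varphi,\Gamma)$-modules from \S\ref{sec20} (especially Proposition \ref{transProp} and Remark \ref{rem221}) with the sheaf-theoretic compatibility statements of \S3.1 (Proposition \ref{tranSh} and Corollary \ref{faistran}), and then reading off the representation-theoretic consequence from the defining exact sequence (\ref{pLL1}). The key point is that everything in sight commutes with $\fc$, so taking (generalized) eigenspaces is an exact operation compatible with all the functors involved.

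First I would fix the $\gl_2$-action on $D$ as in Proposition \ref{lieaction}: since $D$ has no pathological submodule, the $\gl_2$-action induced from $D\boxtimes_{\delta_D}\bP^1(\Q_p)$ agrees with the one in \S\ref{sec22} (with $\alpha=\alpha_D$ if $\deg P=2$, and with $\alpha=0$ in the de Rham weight-$(0,0)$ case, by Remark after (\ref{pLL1})). Tensoring (\ref{pLL1}) with $V_k$ and using Proposition \ref{tranSh} gives the exact sequence displayed just before the theorem, with middle term $(D\otimes_EV_k)\boxtimes_{\delta_Dz^k,w_{D,k}}\bP^1$. Now decompose into $\fc$-eigenspaces. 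For part (1), by Proposition \ref{transProp}(1) the module $D\otimes_EV_k$ splits as $\oplus_{i=0}^k D_{(i,\alpha+k-i)}$, with $D_{(i,\alpha+k-i)}=(D\otimes_EV_k)[\fc=(\alpha+k-2i)^2-1]$; applying Corollary \ref{faistran} to each $\mu=(\alpha+k-2i)^2-1$ identifies $(D\boxtimes_{\delta_D}\bP^1)[\fc=\mu]$ with $D_{(i,\alpha+k-i)}\boxtimes_{\delta_Dz^k,w_k}\bP^1$, and since $\delta_Dz^k=\delta_{D_{(i,\alpha+k-i)}}$ this is $D_{(i,\alpha+k-i)}\boxtimes_{\delta_{D_{(i,\alpha+k-i)}}}\bP^1(\Q_p)$. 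Here one should check that the involution $w_k$ restricted to the summand indeed coincides with $w_{D_{(i,\alpha+k-i)}}$: this is forced by uniqueness of the compatible involution (Colmez \cite[Thm.~0.1]{Colm16}) since $D_{(i,\alpha+k-i)}$ is indecomposable, together with the fact that $w_k$ stabilizes the $\fc$-eigenspace as shown in the proof of Corollary \ref{faistran}. Finally, taking the $[\fc=\mu]$-part of the exact sequence — which is exact because $\fc$ is a morphism of sheaves and the scalar eigenspace is a direct summand on the $D$-side (Remark after (\ref{pLL1})), hence also on $\pi(D)^*\otimes V_k$ and $\pi(D)\otimes V_k$ — yields $0\to \pi(D)^*\otimes_E\delta_{D'}\to D'\boxtimes_{\delta_{D'}}\bP^1\to (\pi(D)\otimes_EV_k)[\fc=\mu]\to 0$, and comparing with (\ref{pLL1}) for $D'=D_{(i,\alpha+k-i)}$ gives $\pi(D_{(i,\alpha+k-i)})\cong(\pi(D)\otimes_EV_k)[\fc=\mu]$.

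For part (2) the same argument runs with $k=1$ replaced throughout, using Proposition \ref{transProp}(2): when $\alpha_D\neq 0$ one has $(D\otimes_EV_k)\{\fc=(\alpha+k)^2-1\}=(D\otimes_EV_k)[\fc=(\alpha+k)^2-1]\cong D_{(0,\alpha+k)}$, so the eigenspace is already saturated and the argument is identical; when $\alpha_D=0$ and $D$ is not de Rham one uses Proposition \ref{transProp}(3), noting that the theorem only claims the statement for the $[\fc=\mu]$-part (not the generalized one), and $(D\otimes_EV_k)[\fc=k^2-1]\cong D_{(0,k)}$ with $\delta_Dz^k=\delta_{D_{(0,k)}}$. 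One must be slightly careful that in the non-de Rham case the eigenspace $[\fc=k^2-1]$ is not a direct summand of $D\otimes_EV_k$ (the generalized eigenspace is a non-split self-extension), so exactness of the functor $(-)[\fc=k^2-1]$ on the sequence (\ref{seqinf}) gives only left-exactness a priori; but the right term $D_{(0,k)}\boxtimes_{\delta_{D_{(0,k)}}}\bP^1$ maps via the comparison to $(\pi(D)\otimes_EV_k)[\fc=k^2-1]$, and Corollary \ref{faistran} already supplies the isomorphism of the middle terms of the two exact sequences, so surjectivity on the right follows by a diagram chase (the snake lemma applied to (\ref{seqinf}) versus (\ref{pLL1}) for $D_{(0,k)}$, using that the left vertical map $\pi(D)^*\otimes_E\delta_{D_{(0,k)}}\to(\pi(D)^*\otimes_EV_k\otimes_E\delta_D)[\fc=k^2-1]$ is an isomorphism by the $k=1$ case and induction as in (\ref{ind2})).

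The main obstacle, I expect, is not any single deep input but the bookkeeping around \emph{which} involution sits on each eigenspace and the verification that the $\fc$-eigenspace decomposition interacts correctly with the $\bP^1$-gluing — i.e. that forming $\sG_{D,\delta,w}[\fc=\mu]$ really does commute with $(-)\otimes_EV_k$ and with passing to global sections over $\bP^1(\Q_p)$. This is exactly what Proposition \ref{tranSh} and Corollary \ref{faistran} are set up to handle, so the serious work has been front-loaded into \S3.1; the remaining step is to invoke them cleanly for each eigenvalue $\mu$ and match central characters $\delta_Dz^k=\delta_{D_{(i,\alpha+k-i)}}$, together with the uniqueness of the compatible triple $(D',\delta_{D'},w_{D'})$ from Colmez's work to pin down $w_k|_{[\fc=\mu]}=w_{D'}$. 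A secondary subtlety is the exactness of eigenspace-extraction on the three-term sequences in the non-split (non-de Rham) case of part (2), which I would resolve by the snake-lemma comparison with (\ref{pLL1}) applied to $D_{(0,k)}$ rather than by trying to split anything.
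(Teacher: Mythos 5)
Your treatment of the first isomorphism is correct and is exactly the paper's route: Proposition \ref{transProp} identifies $(D\otimes_EV_k)[\fc=\mu]$ with $D_{(i,\alpha+k-i)}$, Corollary \ref{faistran} transports this to the $\bP^1$-sheaves, and uniqueness of the compatible involution (together with $\delta_{D_{(i,\alpha+k-i)}}=z^k\delta_D$) pins down $w_k$ on each eigenspace.

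For the second isomorphism, however, there is a genuine gap. You pass to the $[\fc=\mu]$-part of the tensored sequence and then assert that the result \emph{is} the sequence (\ref{pLL1}) for $D'=D_{(i,\alpha+k-i)}$, "comparing" the two to conclude $\pi(D')\cong(\pi(D)\otimes_EV_k)[\fc=\mu]$. But what you actually obtain are two a priori different exact (or left-exact) sequences with the same middle term $D'\boxtimes_{\delta_{D'}}\bP^1$: one with subobject $(\pi(D)^*\otimes_EV_k\otimes_E\delta_D)[\fc=\mu]$ and one with subobject $\pi(D')^*\otimes_E\delta_{D'}$. Two extensions sharing a middle term need not have the same sub and quotient; the entire content of this half of the proof is to show these two subobjects of $D'\boxtimes_{\delta_{D'}}\bP^1$ coincide. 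The paper does this by the argument of \cite[Lem.~3.21]{Colm18}: the inclusion $(\pi(D)^*\otimes_EV_k\otimes_E\delta_D)[\fc=\mu]\hookrightarrow D'\boxtimes_{\delta_{D'}}\bP^1$ must factor through $\pi(D')^*\otimes_E\delta_{D'}$ (a map from a space of this Fr\'echet/dual type to the compact-type space $\pi(D')$ has finite-dimensional image, and $\pi(D')$ has no finite-dimensional subrepresentation), and then the quotient $\pi(D')^*\otimes_E\delta_{D'}\big/(\pi(D)^*\otimes_EV_k\otimes_E\delta_D)[\fc=\mu]$ injects into the compact-type space $(\pi(D)\otimes_EV_k)[\fc=\mu]$, hence is finite-dimensional, hence zero. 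Your proposal never supplies this functional-analytic comparison; in particular the "left vertical map is an isomorphism by the $k=1$ case and induction" step in your part (2) is assuming precisely the statement that needs proof. Your worry about right-exactness of $(-)[\fc=\mu]$ in the non-de Rham case is legitimate, but the snake-lemma fix you propose again presupposes the identification of the subobjects; once that identification is established by the Colmez-type argument, the surjectivity you need falls out (cf. Remark \ref{rem35} (2)).
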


\begin{proof}
	The first isomorphisms in (1) and (2) follow directly from Corollary \ref{faistran}, Proposition \ref{transProp} and the uniqueness of the compatible involution (cf. \cite[Prop.~3.17, Rem.~3.8]{Colm16}, noting $\delta_{D_{(i,\alpha+k-i)}}=z^k \delta_D$). For the second isomorphisms, we only prove (1), (2) following by similar arguments. We have an exact sequence
	\begin{equation*}
	0 \ra \pi(D_{(i,\alpha+k-i)})^* \otimes_E \delta_{D_{(i,\alpha+k-i)}} \ra D_{(i,\alpha+k-i)} \boxtimes_{\delta_{D_{(i,\alpha+k-i)}}} \bP^1(\Q_p) \ra \pi(D_{(i,\alpha+k-i)}) \ra 0.
	\end{equation*}
	By (\ref{pLL1}), the same argument as in \cite[Lem.~3.21]{Colm18} and the fact $\pi(D_{(i,\alpha+k-i)})$ does not have finite dimensional subrepresentations, we see the injection $$(\pi(D)^* \otimes_E V_k \otimes_E\delta_D)[\fc=(\alpha+k-2i)^2-1] \hooklongrightarrow D_{(i,\alpha+k-i)} \boxtimes_{\delta_{D_{(i,\alpha+k-i)}}} \bP^1(\Q_p)$$ factors through $\pi(D_{(i,\alpha+k-i)})^* \otimes_E \delta_{D_{(i,\alpha+k-i)}}$. The quotient of $\pi(D_{(i,\alpha+k-i)})^* \otimes_E \delta_{D_{(i,\alpha+k-i)}}$ by $(\pi(D)^* \otimes_E V_k \otimes_E\delta_D)[\fc=(\alpha+k-2i)^2-1]$ injects into the $E$-space of compact type $(\pi(D) \otimes_EV_k)[\fc=(\alpha+k-2i)^2-1]$, which, by the same argument as in \cite[Lem.~3.21]{Colm18}, has to be finite dimensional. As $\pi(D_{(i,\alpha+k-i)})$ does not have finite dimensional subrepresentations, we deduce the second isomorphism in (1). 
\end{proof}
\begin{remark}\label{rem35}
	(1) When $D$ is trianguline, certain cases (concerning $\pi(D)$) were also obtained in \cite[Thm.~5.2.11]{JLS21}.
	

(2) Suppose $\alpha_D=0$ and $D$ is not de Rham.  By Theorem \ref{CoWthm1} (2), one easily sees the right map in (\ref{seqinf})  for such $D$ and $\mu=k^2-1$ is surjective.

\end{remark}
We move to $\alpha=0$ and  de Rham case. This case is different, as the translation in this case does not directly  give non-trivial $(\varphi, \Gamma)$-submodules (i.e. submodules other than $t^i D$). If $D$ is moreover non-trianguline, we let $\pi(D,i)$ for $i\in \Z$ be Colmez's representations in \cite{Colm18} (for $D=\nabla$ of \textit{loc. cit.}). 
\begin{theorem}\label{CoWThm2}
	Assume $D$ is indecomposable, de Rham of Hodge-Tate weights $(0,0)$. Then $(D, z^k \delta_D, w_{D,k})$  and $(t^k D, z^k \delta_D, w_{D,k})$ are  compatible. We have $$D \boxtimes_{z^k \delta_D, w_{D,k}} \bP^1(\Q_p)\cong (D\boxtimes_{\delta_D} \bP^1(\Q_p))[\fc=k^2-1],$$ and an exact sequence
	\begin{equation*}
	0 \ra D \boxtimes_{z^k \delta_D, w_{D,k}} \bP^1(\Q_p)\ra 	((D\boxtimes_{\delta_D} \bP^1(\Q_p)) \otimes_E V_k)\{\fc=k^2-1\} \ra t^kD \boxtimes_{z^k \delta_D, w_{D,k}} \bP^1(\Q_p)  \ra 0.
	\end{equation*}
 If $D$ is moreover non-trianguline, then $(\pi(D)^* \otimes_EV_k)[\fc=k^2-1]\cong \pi(D,k)^*$, and we have an exact sequence
	\begin{equation}\label{genec1}
		0 \ra \pi(D,k)^* \ra (\pi(D)^* \otimes_E  V_k)\{\fc=k^2-1\} \ra \pi(D,-k)^* \ra 0.
		\end{equation}
\end{theorem}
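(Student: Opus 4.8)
The plan is to mirror the strategy of Theorem \ref{CoWthm1}, substituting Proposition \ref{transProp} (4) for Proposition \ref{transProp} (1)--(2) and paying attention to the fact that the translation now produces only $t^iD$ rather than genuinely new $(\varphi,\Gamma)$-submodules. First I would establish the compatibility of $(D,z^k\delta_D,w_{D,k})$ and $(t^kD,z^k\delta_D,w_{D,k})$: by Proposition \ref{transProp} (4) we have $(D\otimes_E V_k)[\fc=k^2-1]\cong D$ and the generalized eigenspace $(D\otimes_E V_k)\{\fc=k^2-1\}$ sits in a (non-split, as $(\varphi,\Gamma)$-module) self-extension involving $D$ and $t^kD$; combining this with Corollary \ref{faistran} applied to $\mu=k^2-1$ immediately gives that the induced triples are compatible and yields the first displayed isomorphism $D\boxtimes_{z^k\delta_D,w_{D,k}}\bP^1(\Q_p)\cong(D\boxtimes_{\delta_D}\bP^1(\Q_p))[\fc=k^2-1]$ together with a $\GL_2(\Q_p)$-equivariant identification of $((D\boxtimes_{\delta_D}\bP^1(\Q_p))\otimes_E V_k)\{\fc=k^2-1\}$ with $(D\otimes_E V_k)\{\fc=k^2-1\}\boxtimes_{z^k\delta_D,w_{D,k}}\bP^1(\Q_p)$. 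Here I should be careful that $D$ carries the $\gl_2$-action from Proposition \ref{lieaction} (2) with $\alpha=0$, which by Remark (2) after the recollection of the construction of $w_D$ is precisely the one coming from $D\boxtimes_{\delta_D}\bP^1(\Q_p)$, so there is no ambiguity in what "$D\boxtimes_{z^k\delta_D,w_{D,k}}\bP^1(\Q_p)$" means.

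Next I would extract the short exact sequence of sheaves. Applying the exact functor "global sections of the associated $\GL_2(\Q_p)$-sheaf" (which is exact by \cite[Thm.~6.8]{Colm16}-type arguments, or directly because $\boxtimes_{\bullet}\bP^1$ preserves short exact sequences of compatible triples) to the short exact sequence $0\to D\to (D\otimes_E V_k)\{\fc=k^2-1\}\to t^kD\to 0$ of $(\varphi,\Gamma)$-modules over $\cR_E$ from Proposition \ref{transProp} (4), and using the sheaf isomorphism from Corollary \ref{faistran} on the middle term, produces the asserted exact sequence
\begin{equation*}
0\ra D\boxtimes_{z^k\delta_D,w_{D,k}}\bP^1(\Q_p)\ra ((D\boxtimes_{\delta_D}\bP^1(\Q_p))\otimes_E V_k)\{\fc=k^2-1\}\ra t^kD\boxtimes_{z^k\delta_D,w_{D,k}}\bP^1(\Q_p)\ra 0.
\end{equation*}
The only subtlety is left-exactness of $\Res_{\Z_p}$ and right-exactness of the quotient construction; these are formal once one knows all three triples are compatible, which was secured in the previous step.

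Finally, for the non-trianguline statements I would compare the above with the exact sequence (\ref{pLL1}) for $D$ (which reads $0\to\pi(D)^*\otimes_E\delta_D\to D\boxtimes_{\delta_D}\bP^1(\Q_p)\to\pi(D)\to 0$), tensor it with $V_k$, and pass to the $\{\fc=k^2-1\}$-part. On the $[\fc=k^2-1]$-eigenspace, the same argument as in \cite[Lem.~3.21]{Colm18} used in the proof of Theorem \ref{CoWthm1} shows that $(\pi(D)^*\otimes_E V_k)[\fc=k^2-1]$ is, up to a finite-dimensional discrepancy that must vanish because the relevant representations have no finite-dimensional subobjects, exactly the $\pi(D)^*$-part of $D\boxtimes_{z^k\delta_D,w_{D,k}}\bP^1(\Q_p)$; to identify it with $\pi(D,k)^*$ I would invoke \cite[Thm.~0.6]{Colm18}, which characterizes $\pi(D,k)^*$ as precisely the kernel of the map from $D_{(0,k),\sL}\boxtimes\bP^1$-type objects to $(\pi_\infty(D)\otimes_E V_k)^*$, matching the structure of $D\boxtimes_{z^k\delta_D,w_{D,k}}\bP^1(\Q_p)$. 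Taking $\pi(D)^*$-parts in the three-term sheaf exact sequence and using that $t^kD\boxtimes_{z^k\delta_D,w_{D,k}}\bP^1(\Q_p)$ contributes $\pi(D,-k)^*$ on the $\pi^*$-level (again via \cite{Colm18}, since $t^kD$ twisted appropriately is the Hodge--Tate weight $(-k,-k)$ situation giving $\pi(D,-k)$) yields the exact sequence (\ref{genec1}). The main obstacle I anticipate is precisely this last identification: matching the $\pi(D)^*$-isotypic pieces of the sheaf-theoretic exact sequence with Colmez's $\pi(D,\pm k)^*$ requires carefully tracking how the $\gl_2$-action and the shift of Hodge filtration interact with his construction, and in particular checking that the generalized-eigenspace (rather than eigenspace) phenomenon on the $t^kD$-side corresponds on the automorphic side to the non-semisimplicity built into $\pi(D,-k)$; I would handle this by reducing, via Proposition \ref{transProp} (4)'s description of $(D\otimes_E V_k)\{\fc=k^2-1\}$ as $D\oplus t^kD$ as a $(\varphi,\Gamma)$-module, to two separate applications of the already-proven eigenspace statements and then splicing.
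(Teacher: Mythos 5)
Your proposal follows essentially the same route as the paper: part one is exactly Proposition \ref{transProp} (4) plus Corollary \ref{faistran}, and the non-trianguline part is the \cite[Lem.~3.21]{Colm18}-style comparison of the two filtrations on $((D\boxtimes_{\delta_D}\bP^1(\Q_p))\otimes_E V_k)\{\fc=k^2-1\}$, with the identification of the graded pieces coming from Colmez's description of $D\boxtimes_{z^k\delta_D}\bP^1(\Q_p)$ and $t^kD\boxtimes_{z^k\delta_D}\bP^1(\Q_p)$ (the paper cites \cite[Rem.~3.20, Prop.~3.23]{Colm18} where you cite Thm.~0.6; same content). Two caveats. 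First, your parenthetical ``non-split, as $(\varphi,\Gamma)$-module'' is wrong and contradicts what you correctly use later: in the de Rham weight $(0,0)$ case Proposition \ref{transProp} (4) gives $(D\otimes_E V_k)\{\fc=k^2-1\}\cong D\oplus t^kD$ \emph{split} as a $(\varphi,\Gamma)$-module; it is only the $\gl_2$-action (hence the $\GL_2(\Q_p)$-sheaf) that is non-semisimple. Second, and related, your plan to get (\ref{genec1}) by ``two separate applications of the eigenspace statements and then splicing'' glosses over the one genuinely non-formal point: since the sheaf does \emph{not} split, you must prove that the image of $(\pi(D)^*\otimes_E V_k\otimes_E\delta_D)\{\fc=k^2-1\}$ in the quotient $t^kD\boxtimes_{z^k\delta_D}\bP^1(\Q_p)$ is not merely contained in but \emph{equal to} $\pi(D,-k)^*\otimes_E\delta_D$, i.e.\ the surjectivity of the right-hand map in (\ref{genec1}). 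The paper handles this by one further Lem.~3.21-type step: the induced map $\pi(D,-k)^*\otimes_E\delta_D\ra(\pi(D)\otimes_E V_k)\{\fc=k^2-1\}/\pi(D,-k)$ must vanish, forcing surjectivity. You should add that step; with it, your argument is complete and matches the paper's.
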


\begin{proof}
The first part of the theorem follows directly from Proposition \ref{transProp} (4) and Corollary \ref{faistran}. Assume $D$ is non-trianguline, by the uniqueness of the involution (\cite[Prop.~3.17]{Colm16}) $D \boxtimes_{z^k \delta_D, w_{D,k}} \bP^1(\Q_p)$ (resp. $t^k D \boxtimes_{z^k \delta_D, w_{D,k}} \bP^1(\Q_p)$) is just the representation $D \boxtimes_{z^k \delta_D} \bP^1(\Q_p)$ (resp. $t^k D \boxtimes_{z^k \delta_D} \bP^1(\Q_p)$) of \cite[\S~3.3]{Colm18}.  Similarly as in the proof of Theorem \ref{CoWthm1}, using the same argument as in \cite[Lem.~3.21]{Colm18} by comparing (\ref{seqinf}) and the exact sequence in \cite[Rem.~3.20]{Colm18}, we deduce $(\pi(D)^* \otimes_EV_k)[\fc=k^2-1]\cong \pi(D,k)^*$. By \cite[Prop.~3.23]{Colm18}, we have
\begin{equation*}
0 \ra \pi(D,-k)^* \otimes_E \delta_D \ra t^kD \boxtimes_{z^k \delta_D} \bP^1(\Q_p) \ra \pi(D,k) \ra 0.
\end{equation*}
Again by similar arguments in \cite[Lem.~3.21]{Colm18}, the composition 
\begin{equation*}(\pi(D)^* \otimes_E V_k \otimes_E\delta_D)\{\fc=k^2-1\} \ra ((D \boxtimes_{\delta_D} \bP^1(\Q_p))\otimes_E V_k)\{\fc=k^2-1\} \twoheadrightarrow t^k D \boxtimes_{z^k \delta_D} \bP^1(\Q_p)
\end{equation*} factors through $\kappa: (\pi(D)^* \otimes_E V_k)\{\fc=k^2-1\}  \ra \pi(D,-k)^*$. Similarly, the composition  
\begin{multline}\pi(D,-k)^* \otimes_E\delta_D \lra   ((D \boxtimes_{\delta_D} \bP^1(\Q_p))\otimes_E V_k)\{\fc=k^2-1\}/D \boxtimes_{z^k \delta_D} \bP^1(\Q_p)\\ \lra (\pi(D) \otimes_E V_k)\{\fc=k^2-1\}/\pi(D,-k)
  	\end{multline} has to be zero, so $\kappa$ is surjective. This concludes the proof. 
\end{proof}
\begin{remark}\label{rem36}
(1) We have $t^k D \boxtimes_{z^k \delta_D, w_{D,k}} \bP^1(\Q_p) \cong (D \boxtimes_{z^{-k} \delta_D} \bP^1(\Q_p)) \otimes_E z^k \circ \dett\cong (\check{D} \boxtimes_{z^k \delta_{\check{D}}} \bP^1(\Q_p))^{\vee} \otimes_E z^k \circ \dett$, where $\check{D}:=D^{\vee} \otimes_E \varepsilon$, see \cite[Prop.~3.2]{Colm16} for the last isomorphism.

(2) As $\pi(D,-k)^* \subsetneq \pi(D,k)^*$, we see that the right map in (\ref{seqinf}) is not surjective in the case (for $\mu=k^2-1$), in contrast to Remark \ref{rem35} (2).

(3) The second part of the theorem also holds in the trianguline case. We discuss the representations $\pi(D,i)$ in the corresponding cases. Twisting by smooth characters, there are only two cases (noting $D$ is indecomposable):
\begin{enumerate}
	\item[(a)]$D\cong [\cR_E(|\cdot|) - \cR_E]$,
	\item[(b)]$D \cong [\cR_E-\cR_E]$, is the unique de Rham non-split extension.
	\end{enumerate} 
For the case (a), we let $\pi(D,-i):=\Pi_i$, $\pi(D,i):=\Pi_i'$ be as in \cite[Prop.~6.13]{Colm16} for $i\in \Z_{>0}$. The second part of Theorem \ref{CoWThm2} for such $D$ follows by similar arguments in the proof and \cite[Prop.~6.13]{Colm16}.

\noindent For the case (b), let $\val_p:=\Q_p^* \ra E$ be the smooth character sending $p$ to $1$, to which we associate a smooth character $\eta: \Q_p^* \ra E[\epsilon]/\epsilon^2$, $a \mapsto 1+\val_p(a)\epsilon$. Note $\eta$ is a two dimensional smooth representation of  $\Q_p^*$ over $E$.  For $i\in \Z_{\geq 0}$ (resp. $i\in \Z_{<0}$), let $\pi(D,i):=(\Ind_{B^-(\Q_p)}^{\GL_2(\Q_p)} \varepsilon^{-1}z^i \eta \otimes 1)^{\an}$ (resp. $\pi(D,i):=(\Ind_{B^-(\Q_p)}^{\GL_2(\Q_p)} \varepsilon^{-1} z^i \eta \otimes 1)^{\an} \otimes_E z^{-i} \circ \dett$) (which has central character $\varepsilon^{-1}z^{|i|}$). By discussions in \cite[\S~6.5.1]{Colm16}, the second part of Theorem \ref{CoWThm2} in this case similarly follows. 

(4) In all cases, let $\pi_{\infty}(D)$ be the smooth representation of $\GL_2(\Q_p)$ associated to $D$ via the classical local Langlands correspondence. Let $D'\subset D$ be a $(\varphi, \Gamma)$-submodule of Sen weights $(0,k)$, and assume $D'$ is indecomposable. By \cite[Prop.~2.4, Rem.~2.5]{Colm18}, we have (note $\delta_{D'}=z^k \delta_D$)
 \begin{equation*}
 	D' \boxtimes_{\delta_{D'}} \bP^1(\Q_p) \hooklongrightarrow D \boxtimes_{z^k \delta_D, w_{D,k}} \bP^1(\Q_p),
 \end{equation*}
which induces $\pi(D')^* \hookrightarrow \pi(D,k)^*$. Moreover, we have 
\begin{equation*}
	\pi(D,-k)^* \subsetneq \pi(D')^* \subsetneq \pi(D,k)^*,
\end{equation*}
with $\pi(D,k)^*/\pi(D')^*\cong \pi(D')^*/\pi(D,-k)^* \cong (\pi_{\infty}(D) \otimes_E V_k)^*$. When $D$ is irreducible or be as in case (3)(a), $\pi(D,k)^*/\pi(D,-k)^*\cong ((\pi_{\infty}(D) \otimes_E V_k)^*)^{\oplus 2}$. Moreover, by \cite[Thm.~6.15]{Colm16} \cite[Thm.~0.6(iii)]{Colm18}, the map $D' \mapsto \pi(D')^*$ is a one-to-one correspondence between the $(\varphi, \Gamma)$-submodules of Sen weights $(0,k)$ to the subrepresentation of $\pi(D,k)^*$ of quotient isomorphic to $(\pi_{\infty}(D) \otimes_E V_k)^*$ (which also corresponds to non-split extensions of $(\pi_{\infty}(D) \otimes_E V_k)^*$ by $\pi(D,-k)^*$, cf. \cite[Lem.~3.1.3]{Br16}\cite[Thm.~2.5]{Ding10}). When $D$ is as in (3)(b), then $D$ admits a unique indecomposible $(\varphi, \Gamma)$-submodule of weights $(0,k)$, which has the form $[t^k \cR_E -\cR_E]$. Correspondingly, in this case $\pi(D,k)^*/\pi(D,-k)^*$ is a non-split self-extension of $(\pi_{\infty}(D) \otimes_E V_k)^*$, and $\pi(D')^*$ is the (unique) subrepresentation of $\pi(D,k)^*$ of quotient $(\pi_{\infty}(D) \otimes_E V_k)^*$.
\end{remark}
We finally discuss the translations on the $\GL_2(\Q_p)$-sheaves associated to rank one $(\varphi, \Gamma)$-modules. Twisting by characters, it suffices to consider $\cR_E \boxtimes_{\delta} \bP^1(\Q_p)$ for a continuous character $\delta$ of $\Q_p^*$.  Let $\alpha:=\wt(\delta)+1$, the corresponding $\gl_2$-action on $\cR_E$ satisfies $\fz=\wt(\delta)$, and $u^-=\frac{\nabla(\nabla-\alpha)}{t}$. We have by \cite[Prop.~4.14]{Colm16},  $(\cR_E \boxtimes_{\delta} \bP^1(\Q_p))^{\vee} \cong \cR_E(\varepsilon) \boxtimes_{\delta^{-1}} \bP^1(\Q_p)\cong (\cR_E \boxtimes_{\delta^{-1} \varepsilon^{-2}} \bP^1(\Q_p)) \otimes_E \varepsilon\circ \dett$. So it suffices to consider the case $\alpha \in E\setminus \Z_{<0}$, and we assume this is so. The following theorem follows easily from Proposition \ref{decork1} (applied to $\Delta=\cR_E$) and Corollary \ref{faistran}.
\begin{theorem}
	We have
$((\cR_E \boxtimes_{\delta} \bP^1(\Q_p)) \otimes_EV_k)[\fc=(\alpha+k)^2-1]\cong \cR_E \boxtimes_{z^k \delta} \bP^1(\Q_p)$, and an exact sequence
\begin{equation*}
	0 \ra \cR_E \boxtimes_{z^k \delta} \bP^1(\Q_p)  \ra ((\cR_E \boxtimes_{\delta} \bP^1(\Q_p)) \otimes_EV_k)\{\fc=(\alpha+k)^2-1\} \ra t^k \cR_E  \boxtimes_{z^k \delta} \bP^1(\Q_p) \ra 0.
\end{equation*}\end{theorem}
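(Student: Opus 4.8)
The plan is to reduce everything to two earlier results: Proposition \ref{decork1} applied to the $p$-adic differential equation $\Delta=\cR_E$, and Corollary \ref{faistran}, which transports a direct-sum / extension decomposition of a $(\varphi,\Gamma)$-module equipped with a compatible involution to the associated $\GL_2(\Q_p)$-sheaves on $\bP^1(\Q_p)$. First I would record the setup already fixed before the statement: $\delta$ is a continuous character of $\Q_p^\times$ with $\alpha:=\wt(\delta)+1\in E\setminus\Z_{<0}$, and $\cR_E$ carries the $\gl_2$-action with $\fz=\wt(\delta)$, $u^-=-\frac{\nabla(\nabla-\alpha)}{t}$ (so $\fc=\alpha^2-1$), which is exactly the action used to build the compatible triple $(\cR_E,\delta,w_\delta)$ underlying $\cR_E\boxtimes_\delta\bP^1(\Q_p)$; here one uses the remark that the $\gl_2$-action induced from the sheaf coincides with the one from \S\ref{sec22}. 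Since $\cR_E$ is a de Rham $p$-adic differential equation of constant Hodge-Tate weight $0$, Proposition \ref{decork1} applies verbatim.

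The main computation is then purely on the Galois side. By Proposition \ref{decork1}(2), since $\alpha\in E\setminus\Z_{<0}$, we have $(\cR_E\otimes_E V_k)[\fc=(\alpha+k)^2-1]\cong\cR_E$ as a $(\varphi,\Gamma)$-module (with the $\gl_2$-action corresponding to the character $z^k\delta$, i.e. $\fz$ shifted by $k$); and by Proposition \ref{decork1}(3), specialized appropriately, $(\cR_E\otimes_E V_k)\{\fc=(\alpha+k)^2-1\}$ is either equal to the eigenspace (when $\alpha+k$ is generic, giving just $\cR_E$) or, in the boundary cases captured by the proof of Proposition \ref{transProp}(4), sits in a non-split extension $0\to\cR_E\to(\cR_E\otimes_E V_k)\{\fc=(\alpha+k)^2-1\}\to t^k\cR_E\to0$. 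In fact the cleanest way to package both outcomes uniformly is: the generalized eigenspace always fits in the short exact sequence
\begin{equation*}
0 \ra \cR_E \ra (\cR_E\otimes_E V_k)\{\fc=(\alpha+k)^2-1\} \ra t^k\cR_E \ra 0
\end{equation*}
of $(\varphi,\Gamma)$-modules over $\cR_E$ (the sequence being split precisely when the eigenspace and generalized eigenspace coincide), where the sub is $(\cR_E\otimes_E V_k)[\fc=(\alpha+k)^2-1]$. One should double-check that the quotient is $t^k\cR_E$ and not some other rank-one submodule of Sen weight $k$: this follows because $(\cR_E\otimes_E V_k)\{\fc=(\alpha+k)^2-1\}$ is saturated in $\cR_E\otimes_E V_k$, whose top graded piece is $t^k\cR_E$, exactly as in the last paragraph of the proof of Proposition \ref{transProp}(4).

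Next I would apply Corollary \ref{faistran} with $D=\cR_E$, $\delta=\delta$, $w=w_\delta$, and $\mu=(\alpha+k)^2-1$. Since $(\cR_E,\delta,w_\delta)$ is compatible, the corollary gives that $((\cR_E\otimes_E V_k)[\fc=\mu],z^k\delta,w_{\delta,k})$ and $((\cR_E\otimes_E V_k)\{\fc=\mu\},z^k\delta,w_{\delta,k})$ are compatible, and that
\begin{equation*}
(\cR_E\otimes_E V_k)[\fc=\mu]\boxtimes_{z^k\delta,w_{\delta,k}}\bP^1(\Q_p)\cong(\cR_E\boxtimes_{\delta}\bP^1(\Q_p)\otimes_E V_k)[\fc=\mu],
\end{equation*}
together with the analogous statement for the generalized eigenspace. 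Combining with the identification $(\cR_E\otimes_E V_k)[\fc=\mu]\cong\cR_E$ and the uniqueness of the compatible involution on a rank-one module (the $w$ attached to $\cR_E(z^k\delta)$ is unique, cf. \cite[Rem.~3.8(i)]{Colm16}), the left-hand side is canonically $\cR_E\boxtimes_{z^k\delta}\bP^1(\Q_p)$; this gives the first displayed isomorphism of the theorem. Likewise applying the sheaf functor $\sG_{-,z^k\delta,w_{\delta,k}}$ to the short exact sequence above of $(\varphi,\Gamma)$-modules — noting this functor is exact on the relevant objects, or more directly that the restriction maps and the involution $w_{\delta,k}$ commute with $\fc$, so taking $(\cdot)\boxtimes\Z_p^\times$ and then $\bP^1$ preserves the sequence — and using that $t^k\cR_E\boxtimes_{z^k\delta,w_{\delta,k}}\bP^1(\Q_p)$ is the honest sheaf $t^k\cR_E\boxtimes_{z^k\delta}\bP^1(\Q_p)$ (again by uniqueness of the involution) yields the claimed exact sequence.

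The only genuine obstacle is bookkeeping: making sure that the $\gl_2$-structure used to define the translation $\cR_E\otimes_E V_k$ is the one the sheaf $\cR_E\boxtimes_\delta\bP^1(\Q_p)$ actually carries (handled by the preceding remark, and the hypothesis $\alpha=\wt(\delta)+1$), and that after translation the central character has become $z^k\delta$ so that $z^k\delta=\delta_{\cR_E(z^k\delta)}$ up to the $\varepsilon$-twist convention — i.e. checking the indices match Colmez's normalization $\delta_D\varepsilon=\wedge^2 D$ for a rank-one module, which is a direct character computation. Everything else is a formal consequence of Proposition \ref{decork1} and Corollary \ref{faistran}, exactly as stated in the sentence preceding the theorem; no new ideas are needed, so I would keep the written proof to a couple of lines citing those two results and the rank-one uniqueness of the involution.
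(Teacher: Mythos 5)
Your overall strategy is exactly the paper's: the author's entire proof is the single sentence that the theorem ``follows easily from Proposition \ref{decork1} (applied to $\Delta=\cR_E$) and Corollary \ref{faistran}'', supplemented by the uniqueness of the compatible involution on a rank-one module, and your first and last paragraphs reproduce that reduction faithfully (including the check that the $\gl_2$-action used for the translation is the one carried by the sheaf, and that the central character becomes $z^k\delta$).

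There is, however, a concrete error in your middle paragraph, in the Galois-side computation of the generalized eigenspace. First, in the boundary case $\alpha=0$ the extension is \emph{split}: Proposition \ref{decork1}(3) gives $(\cR_E\otimes_E V_k)\{\fc=k^2-1\}\cong \cR_E\oplus t^k\cR_E$ as $(\varphi,\Gamma)$-modules (what fails to be semisimple is only the $\fc$-action), so your assertion that it ``sits in a non-split extension'' contradicts the very proposition you are citing — the non-split phenomenon occurs in Proposition \ref{transProp}(3), i.e.\ the non--de Rham rank-two case, which is irrelevant for $\cR_E$. Second, your ``uniform packaging'' cannot be correct as stated: when $\alpha\neq 0$ (and $\alpha\notin\Z_{<0}$) the eigenvalues $(\alpha+k-2i)^2-1$ for $i=0$ and $i=k$ are distinct, so $(\cR_E\otimes_E V_k)\{\fc=(\alpha+k)^2-1\}=(\cR_E\otimes_E V_k)[\fc=(\alpha+k)^2-1]\cong\cR_E$ has rank one and cannot surject onto $t^k\cR_E$; a sequence $0\to\cR_E\to\cR_E\to t^k\cR_E\to 0$ is not exact, split or otherwise. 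The displayed exact sequence is only meaningful when the generalized eigenspace is strictly larger than the eigenspace, which under the standing hypothesis $\alpha\in E\setminus\Z_{<0}$ happens exactly for $\alpha=0$ (one needs $(\alpha+k-2i)^2=(\alpha+k)^2$ for some $0<i\leq k$, forcing $i=\alpha+k$). You should restrict the second assertion to that case rather than paper over the rank mismatch with a claim about splitness; with that correction the rest of your argument (Corollary \ref{faistran} plus uniqueness of the involution to identify the sub and quotient sheaves) goes through as in the paper.
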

\begin{remark}By \cite[Prop.~4.12]{Colm16}, we have an exact sequence
\begin{equation*}
	0 \ra ((\Ind_{B^-(\Q_p)}^{\GL_2(\Q_p)} \delta \otimes 1) ^{\an})^*\otimes_E \delta \circ \dett \ra \cR_E \boxtimes_{\delta} \bP^1(\Q_p) \ra (\Ind_{B^-(\Q_p)}^{\GL_2(\Q_p)} \varepsilon^{-1} \otimes \delta \varepsilon)^{\an} \ra 0.
\end{equation*}
We refer to \cite{JLS21} for a detailed study of translations on locally analytic principal series.
\end{remark}

 \subsection{Some complements}\label{seccomp} In this section, let $D$ be an indecomposible $(\varphi, \Gamma)$-module of rank $2$, equipped with the induced $\gl_2$-action from $D \boxtimes_{\delta_D} \bP^1(\Q_p)$. We assume moreover $D$ does not contain pathological submodules.

 Suppose $D$ is not trianguline. 
We reveal and generalize Colmez's operator $\partial$ on $\pi(D)^*$ \cite{Colm18}. By \cite[Cor.~2.7]{Dos11}, $u^+$ is injective on $\pi(D)^*$. By the same argument as in Lemma \ref{inj}, we have
\begin{lemma}
	Assume $D$ is not trianguline, the $P^+$-equivariant composition 
	\begin{equation}\label{jmathk}
	\jmath_k:	(\pi(D)^* \otimes_E V_k)[\fc=\mu] \hooklongrightarrow \pi(D)^* \otimes_E V_k \twoheadlongrightarrow \pi(D)^*
	\end{equation}
is injective. 
\end{lemma}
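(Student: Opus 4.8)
The plan is to adapt the proof of Lemma \ref{inj} verbatim, replacing $D$ by $\pi(D)^*$ and using that $u^+$ acts injectively on $\pi(D)^*$ instead of the injectivity of $t$ on a $(\varphi,\Gamma)$-module. Concretely, set $e_i := t^i e \in V_k$ for $i = 0, \dots, k$ (with the convention $e_{-1} = e_{k+1} = 0$), so that $\{e_i\}$ is an $E$-basis of $V_k$, and write a general element of $\pi(D)^* \otimes_E V_k$ as $v = \sum_{i=0}^k v_i \otimes e_i$ with $v_i \in \pi(D)^*$. The image of $v$ under the projection (\ref{projna}) (tensored with $\pi(D)^*$, i.e. the map in (\ref{jmathk})) is $v_0$, so I must show that $\fc v = \mu v$ together with $v_0 = 0$ forces $v = 0$.

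First I would record the formula for the Casimir on the tensor product, which is exactly (\ref{casmirk}) with $D$ replaced by $\pi(D)^*$ since that formula only used the diagonal $\gl_2$-action and the $\cR_E^+$-module structure on $V_k$, and $\pi(D)^*$ carries a $\gl_2$-action from $D \boxtimes_{\delta_D} \bP^1(\Q_p)$ (by (\ref{pLL1}), $\pi(D)^* \otimes_E \delta_D$ is a $\GL_2(\Q_p)$-subrepresentation, hence $\gl_2$-stable). Then, assuming $\fc v = \mu v$ and $v_0 = 0$, I compare the coefficient of $e_0$ on both sides. By (\ref{casmirk}) the $e_0$-coefficient of $\fc v$ is $\fc v_0 + (\text{scalar})\, v_0 + 4(k) u^+ v_1 + \dots$ which, since $v_0 = 0$, reduces to a nonzero multiple of $u^+ v_1$; on the right side the $e_0$-coefficient of $\mu v$ is $\mu v_0 = 0$. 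Hence $u^+ v_1 = 0$, and by \cite[Cor.~2.7]{Dos11} ($u^+$ injective on $\pi(D)^*$) we get $v_1 = 0$. Now I induct: supposing $v_0 = \dots = v_{i-1} = 0$, comparing the $e_{i-1}$-coefficients of $\fc v$ and $\mu v$ yields (after the already-vanishing terms drop out) a nonzero multiple of $u^+ v_i$ equal to zero, so $v_i = 0$. This gives $v = 0$, proving $\jmath_k$ is injective; $P^+$-equivariance of $\jmath_k$ is immediate from (\ref{injna})/(\ref{projna}) being $P^+$-equivariant (indeed $(\varphi,\Gamma)$-equivariant) and the eigenspace inclusion being $\gl_2$- hence $P^+$-equivariant at the Lie-algebra level, and in fact $\fc$ commutes with the $P^+$-action.

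The only genuine input beyond the formal manipulation is the injectivity of $u^+$ on $\pi(D)^*$, which is \cite[Cor.~2.7]{Dos11} and is invoked in the statement's preamble; everything else is the same bookkeeping as in Lemma \ref{inj}. The main obstacle, such as it is, is purely notational: making sure the coefficient $4(i+1)(k-i)$ of $u^+ v_{i+1}$ in (\ref{casmirk}) is nonzero in the range $0 \le i \le k-1$ that the induction traverses, which holds since $0 < i+1 \le k$ and $0 \le k - i < k$ so neither factor vanishes in characteristic zero. Thus the induction step is always available, and the proof goes through exactly as for Lemma \ref{inj}.
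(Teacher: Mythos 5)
Your proposal is correct and is exactly the argument the paper intends: it cites \cite[Cor.~2.7]{Dos11} for the injectivity of $u^+$ on $\pi(D)^*$ and then simply says "by the same argument as in Lemma \ref{inj}", which is precisely the coefficient-by-coefficient induction you carry out. Your extra care in checking that the coefficients $4(i+1)(k-i)$ are nonzero in the relevant range is a welcome (if implicit in the paper) verification.
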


The following lemma follows by direct calculation.
\begin{lemma}\label{lie}
Let $M$ be	an $E$-vector space equipped with a $\gl_2$-action. Let $\alpha\in E$, assume on $M$, $\fc=\alpha^2-1$ and $\fz=\alpha-1$. Then for $g=\begin{pmatrix}
	a & b \\ c & d
\end{pmatrix}$, we have on $M$:
\begin{equation*}
	u^+ \Ad_g(u^+)=(-ca^++au^+)(-c(a^+-\alpha)+au^+).
\end{equation*}
In particular, if $u^+$, and $\Ad_g(u^+)$ are injective on $M$, so are the operators $(-ca^++au^+)$ and $(-c(a^+-\alpha)+a u^+)$. 
\end{lemma}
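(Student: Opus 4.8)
The plan is to verify the identity by a direct computation inside the universal enveloping algebra $\text{U}(\gl_2)$, then read off the injectivity consequence. First I would expand both sides using the basis conventions fixed in the Notation section. Recall $u^+=\left(\begin{smallmatrix}0&1\\0&0\end{smallmatrix}\right)$, and for $g=\left(\begin{smallmatrix}a&b\\c&d\end{smallmatrix}\right)$ one computes $\Ad_g(u^+)=g u^+ g^{-1}$ directly as a $2\times 2$ matrix, which (up to the scalar $\det g$, which is harmless since $\Ad$ is by conjugation) is a linear combination of the matrix units; in terms of $a^+,a^-,u^+,u^-$ one gets $\Ad_g(u^+)=-ac\,a^+ + a^2\,u^+ + \text{(terms in }u^-,a^-)$ after normalizing $\det g=1$. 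The claim is really an identity of the specific element $u^+\Ad_g(u^+)\in\text{U}(\gl_2)$ acting on any module $M$ on which the relations $\fc=\alpha^2-1$ and $\fz=\alpha-1$ hold, so I would work modulo the two-sided ideal generated by $\fc-(\alpha^2-1)$ and $\fz-(\alpha-1)$.

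The key step is to rewrite $u^+\Ad_g(u^+)$ using $\fh=2a^+-\fz=2a^+-(\alpha-1)$, so that $a^+=\tfrac{1}{2}(\fh+\alpha-1)$, and to substitute the value of $\fc$. On the right-hand side, $(-ca^++au^+)(-c(a^+-\alpha)+au^+)$ expands to $c^2 a^+(a^+-\alpha) - ca\big(a^+u^+ + u^+(a^+-\alpha)\big) + a^2 (u^+)^2$. Since $[a^+,u^+]=u^+$ we have $a^+u^+=u^+(a^++1)=u^+a^++u^+$, so $a^+u^+ + u^+(a^+-\alpha)=2u^+a^+ +(1-\alpha)u^+=u^+(2a^++1-\alpha)=u^+\fh$ after using $\fz=\alpha-1$. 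Thus the right side equals $c^2 a^+(a^+-\alpha) - ca\,u^+\fh + a^2(u^+)^2$. On the left side, one expands $\Ad_g(u^+)$ in the same basis and multiplies by $u^+$; the terms involving $u^-$ and $a^-$ get absorbed precisely when one substitutes $4u^+u^-=\fc-\fh^2-2\fh=(\alpha^2-1)-\fh^2-2\fh$, and after rewriting everything in terms of $\fh$ and commuting $u^+$ past the relevant generators (using $[u^+,u^-]=\fh$, $[a^+,u^+]=u^+$) the two sides match. I would also double-check the degenerate/normalization issue: conjugation $\Ad_g$ is insensitive to scaling $g$, so WLOG $ad-bc=1$, and the entries $b,d$ genuinely drop out of $\Ad_g(u^+)$ because $g u^+ g^{-1}$ only involves the first column of $g^{-1}$ which is $(d,-c)/\det g$ and the second column of $g$ which is $(b,d)$ — wait, more carefully, $\Ad_g(u^+)=g\, u^+ g^{-1}$ sends $e_2\mapsto$ (first column of $g$) scaled, so it visibly depends only on $a,c$ up to the determinant, confirming the stated form.

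For the final sentence: once the operator identity $u^+\Ad_g(u^+)=(-ca^++au^+)(-c(a^+-\alpha)+au^+)$ is established on $M$, injectivity of the two factors is immediate. Indeed $u^+\Ad_g(u^+)$ is the composite of the two injective operators $u^+$ and $\Ad_g(u^+)$ (the latter being injective since $\Ad_g$ is an automorphism of $\gl_2$ carrying $u^+$ to a conjugate, and injectivity of $u^+$ is preserved under the twist by $g$), hence $u^+\Ad_g(u^+)$ is injective; therefore so is each right-hand factor, since a composite $ST$ injective forces $T$ injective, and injectivity of $(-c(a^+-\alpha)+au^+)$ together with injectivity of the product forces $(-ca^++au^+)$ injective as well (here one uses that the two factors commute, which follows from the identity itself read symmetrically, or directly since $[a^+,u^+]=u^+$ makes the bracket of the two factors a scalar multiple of a single factor — in fact a short computation shows they commute).

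The main obstacle I anticipate is purely bookkeeping: carefully expanding $u^+\Ad_g(u^+)$ in $\text{U}(\gl_2)$ and correctly applying the substitutions $\fc=\alpha^2-1$, $\fz=\alpha-1$ while tracking all the non-commuting terms, so that the $u^-$- and $a^-$-dependence cancels exactly. There is no conceptual difficulty, but sign and commutator errors are easy to make; the cleanest route is probably to evaluate both sides on a highest-weight-type generating vector or to work in the $2$-dimensional representation first to pin down the combinatorics, then promote to the universal statement using that $\text{U}(\gl_2)/(\fc-(\alpha^2-1),\fz-(\alpha-1))$ is spanned by monomials $(u^-)^i (a^+)^j (u^+)^l$ with small $i,l$.
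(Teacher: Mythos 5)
Your approach is exactly the paper's: the paper gives no argument beyond ``follows by direct calculation,'' and your plan — expand $\Ad_g(u^+)=(\det g)^{-1}(-ac\,\fh+a^2u^+-c^2u^-)$, expand the right-hand side using $[a^+,u^+]=u^+$ and $\fh=2a^+-\fz=2a^++1-\alpha$ to get $c^2a^+(a^+-\alpha)-ac\,u^+\fh+a^2(u^+)^2$, and eliminate $u^+u^-$ via the Casimir — is the correct route, and your injectivity argument is sound (the two factors commute since they differ by the scalar $c\alpha$, so injectivity of the product gives injectivity of each). Two small points: the substitution should be $4u^+u^-=\fc-\fh^2+2\fh$, not $\fc-\fh^2-2\fh$; with the correct sign one finds $u^+u^-=-a^+(a^+-\alpha)$ on $M$, which is exactly what makes the $c^2$-terms agree, whereas your sign would give $u^+u^-=-(a^++1)(a^+-\alpha+1)$ and the sides would not match. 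And, as you rightly observe, the displayed identity holds literally only after normalizing $\det g=1$ (in general the left side carries a factor $(\det g)^{-1}$), which is consistent with the $\dett(g)$ factors that reappear in the application to the next lemma.
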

Consider the $k=1$ case.	By the same argument as in the proof of Lemma \ref{keylm} (with $D$ replaced by $\pi(D)^*$), $(\pi(D)^* \otimes_E V_1)[\fc=(\alpha+1)^2-1]$ has the form $v=v_0\otimes e_0+v_1 \otimes e_1$ with $v_0 \in \pi(D)^*$ satisfying $a^+(v_0)\in u^+ \pi(D)^*$ and $v_1=\frac{a^+}{u^+} v_0$ (well-defined as $u^+$ is injective). The map $\jmath_1:(\pi(D)^* \otimes_EV_1)[\fc=(\alpha+1)^2-1] \hookrightarrow \pi(D)^* \otimes_E V_1 \ra \pi(D)^*$ sends $v$ to $v_0$.  We let $\partial:=\frac{a^+}{u^+}: \Ima(\jmath_1) \ra \pi(D)^*$.
\begin{lemma}\label{lemprt}
 For $g=\begin{pmatrix}
	a & b \\ c & d
	\end{pmatrix}\in \GL_2(\Q_p)$, and $u\in \Ima(\jmath_1)$, we have $g(u)\in (-c\partial+a) \Ima(\jmath_1)$. Moreover, $\jmath_1(g(v))=\det(g) (-c\partial +a)^{-1} g(\jmath_1(v))$ for $v\in (\pi(D)^* \otimes_EV_1)[\fc=(\alpha+1)^2-1]$.
\end{lemma}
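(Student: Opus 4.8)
The plan is to trace through the isomorphism $(\pi(D)^*\otimes_E V_1)[\fc=(\alpha+1)^2-1]\cong \Ima(\jmath_1)$ explicitly and then transport the diagonal $\GL_2(\Q_p)$-action on the left to the subspace $\Ima(\jmath_1)\subset\pi(D)^*$ via $\jmath_1$. Concretely, write $V_1=Ee_0\oplus Ee_1$ with $e_1=te_0$, so that in the identification $V_1\cong\cR_E^+/X^2$ the matrix $g=\begin{pmatrix} a & b \\ c & d\end{pmatrix}$ acts on $e_0$ (the lowest weight vector) by $g(e_0)=(ce_1)+ \text{(coefficient)}\,e_0$; more precisely one computes from $g\cdot e = \det(g)^{?}$-normalization that $g(e_0)=\lambda_g e_0 + c\,e_1$ for a scalar $\lambda_g$, and $g(e_1)=\det(g)e_0$ (since $e_1$ spans the top weight line $\wedge^2$, up to the fixed normalization of the $V_1$-action used throughout). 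So for $v=v_0\otimes e_0+v_1\otimes e_1\in(\pi(D)^*\otimes_E V_1)[\fc=(\alpha+1)^2-1]$ with $v_1=\partial v_0$, we get
\begin{equation*}
g(v)=g(v_0)\otimes g(e_0)+g(v_1)\otimes g(e_1)=\big(\lambda_g\, g(v_0)+\det(g)\,g(v_1)\big)\otimes e_0 + \big(c\, g(v_0)\big)\otimes e_1,
\end{equation*}
so that $\jmath_1(g(v))=\lambda_g\,g(v_0)+\det(g)\,g(\partial v_0)$ and, since $g(v)$ again lies in the $[\fc=(\alpha+1)^2-1]$-eigenspace, its second component must equal $\partial$ applied to its first component: $c\,g(v_0)=\partial\big(\lambda_g g(v_0)+\det(g) g(\partial v_0)\big)$.

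Next I would identify the scalar $\lambda_g$. The cleanest route is to use Lemma \ref{lie}: on $\pi(D)^*$ (where $\fc=\alpha^2-1$, $\fz=\alpha-1$, after the relevant twist; note $\pi(D)^*$ here carries the $\gl_2$-action with the shift by $\delta_D$, and the hypothesis $\alpha_D\notin\Z_{<0}$ or the de Rham normalization places us in the setting where $u^+$ and $\Ad_g(u^+)$ are injective by \cite[Cor.~2.7]{Dos11} applied to $\pi(D)^*$ and to its twist), one gets that $(-ca^++au^+)$ is injective. The point is that $\jmath_1$ intertwines the $P^+$-action strictly, and a short computation with $\Ad_g$ on $u^+=t$ shows the ``correction factor'' relating $g\circ\jmath_1$ and $\jmath_1\circ g$ is exactly multiplication by $\det(g)(-c\partial+a)^{-1}$: indeed, applying the first-component identity above and solving, $\jmath_1(g(v))=v_0' $ where $v_0'$ satisfies $(-c\partial+a)v_0'=\det(g)\,g(v_0)$, i.e. $\lambda_g=a$ up to the normalization and the $e_1$-consistency forces $v_0'=\det(g)(-c\partial+a)^{-1}g(\jmath_1(v))$. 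This simultaneously shows $g(u)\in(-c\partial+a)\Ima(\jmath_1)$ for $u\in\Ima(\jmath_1)$, since $g(\jmath_1(v))=\det(g)^{-1}(-c\partial+a)\jmath_1(g(v))$ and $\jmath_1(g(v))\in\Ima(\jmath_1)$.

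The main obstacle, and where I would spend the most care, is pinning down the precise normalization of the $\GL_2(\Q_p)$-action on $V_1$ and hence the exact constants: the paper's convention makes $e_0$ the \emph{lowest} weight vector and builds $\cR_E^+/X^{k+1}\xrightarrow{\sim}V_k$ via $\alpha\mapsto\alpha e$, so $g(e_0)$ and $g(e_1)$ must be recomputed against \emph{that} convention rather than the naive symmetric-power one — in particular one must check whether the $\det(g)$ factor sits on $g(e_1)$ as I claimed, and whether $\lambda_g=a$ or $\lambda_g=d$ or some $\varepsilon$-twist thereof. Once that bookkeeping is fixed, everything reduces to the displayed linear-algebra identity together with the injectivity of $(-c\partial+a)$ on $\Ima(\jmath_1)$, which follows from Lemma \ref{lie} (with $u^+$ injective on $\pi(D)^*$ by \cite[Cor.~2.7]{Dos11}, noting $\Ad_g(u^+)$ injective since conjugating preserves injectivity). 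Well-definedness of $(-c\partial+a)^{-1}$ on the target of $\jmath_1$ is then automatic. I would present the argument by first recording the two formulas $g(e_0)=\lambda_g e_0+c e_1$, $g(e_1)=\det(g)e_0$, then the computation of $g(v)$, then extracting both assertions of the lemma from the resulting identity and the injectivity statement.
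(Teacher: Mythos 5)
Your outline matches the paper's proof (expand $v=v_0\otimes e_0+v_1\otimes e_1$, apply $g$, project to the $e_0$-component, invoke Lemma \ref{lie}), but two things go wrong. First, the explicit $V_1$-formulas you start from are incorrect. With the paper's conventions ($e_0$ the lowest weight vector, $e_1=Xe_0$; concretely $e_0=f_2$, $e_1=f_1$ in the standard basis of $E^2$) one has $g(e_0)=d\,e_0+b\,e_1$ and $g(e_1)=c\,e_0+a\,e_1$; in particular $g(e_1)\neq\det(g)\,e_0$ ($e_1$ spans a weight line, not a $\GL_2$-stable line), so the correct projection is $\jmath_1(g(v))=c\,g(v_1)+d\,g(v_0)$, not $\lambda_g\,g(v_0)+\det(g)\,g(v_1)$. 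This is not harmless bookkeeping: the coefficients $c$ and $d$ sitting on $g(v_1)$ and $g(v_0)$ are exactly what must recombine, via conjugation by $g$, into the operator $\det(g)(-c\partial+a)^{-1}$.

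Second, and more seriously, the passage from the component formula to the asserted identity is not actually carried out: you say the "$e_1$-consistency forces" the answer and that "a short computation with $\Ad_g$" gives the correction factor, but that computation is the entire content of the lemma. To solve the consistency relation one must compare $\partial(g(v_0))$ with $g(\partial v_0)$, i.e. conjugate the defining relation $a^+v_0=u^+v_1$ by $g$. The paper does this: from $\Ad_g(a^+)g(v_0)=\Ad_g(u^+)g(v_1)$ one gets $u^+\Ad_g(u^+)\jmath_1(g(v))=u^+\Ad_g(ca^++du^+)g(v_0)$; the direct calculation $\Ad_g(ca^++du^+)=\det(g)\bigl(-c(a^+-\alpha+1)+au^+\bigr)$ together with the factorization $u^+\Ad_g(u^+)=(-ca^++au^+)\bigl(-c(a^+-\alpha)+au^+\bigr)$ of Lemma \ref{lie} then lets one cancel the factor $-c(a^+-\alpha)+au^+$ (it is injective by Lemma \ref{lie} and commutes with $-ca^++au^+$, the two differing by the scalar $c\alpha$; one also uses $u^+a^+=(a^+-1)u^+$ to move $u^+$ past the right-hand factor), yielding $(-ca^++au^+)\jmath_1(g(v))=\det(g)\,u^+g(\jmath_1(v))$ and hence the formula after dividing by the injective $u^+$. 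None of this appears in your proposal, so the central step is missing. Your deduction of the first assertion ($g(u)\in(-c\partial+a)\Ima(\jmath_1)$) from the second is fine once the second is established.
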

\begin{proof}
Write $v=v_0\otimes e_0+v_1 \otimes e_1$. Hence $\jmath_1(g(v))=c g(v_1)+dg(v_0)$. As $a^+(v_0)=u^+(v_1)$, we have $\Ad_g(a^+) g(v_0)= \Ad_g(u^+) g(v_1)$. So $u^+ \Ad_g(u^+) \jmath_1(g(v))=u^+\Ad_g(ca^++du^+) g(v_0)$. By a direct calculation, $\Ad_g(ca^++du^+)=\dett(g)(-c(a^+-\alpha+1)+au^+)$. Together with Lemma \ref{lie}, $(-ca^++au^+) \jmath_1(g(v))=u^+ g(\jmath(v))$. The lemma follows.
\end{proof}
Let $\jmath_1^i$ be the similar map with $(\pi(D)^*\otimes_E V_{i-1})[\fc=(\alpha+i-1)^2-1]$ replacing $\pi(D)^*$. It is easy to see by induction that (\ref{ind2}) holds with $D$ replaced by $\pi(D)^*$. We have $\jmath_k=\jmath_1^k \circ \jmath_1^{k-1}\cdots \circ \jmath_1^1$, and operators:
\begin{equation}\label{partialk}
\Ima(\jmath_1^{k}) \xlongrightarrow{\partial} \Ima(\jmath_1^{k-1}) \xlongrightarrow{\partial} \cdots \xlongrightarrow{\partial} \Ima(\jmath_1^2) \xlongrightarrow{\partial} \pi(D)^*.
\end{equation}
By Lemma \ref{lemprt} and induction (with an analogue of (\ref{ind2})), we have:
\begin{proposition}For $g=\begin{pmatrix} a & b \\ c & d \end{pmatrix}\in \GL_2(\Q_p)$, and $u\in \Ima(\jmath_k)$, we have $g(u) \in (-c\partial+a)^k \Ima(\jmath_k)$. Moreover, $\jmath_k(g(v))=\det(g)^k (-c\partial +a )^{-k} g(\jmath_k(v))$.
\end{proposition}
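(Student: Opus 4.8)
The plan is to prove the proposition by reducing to the $k=1$ case already established in Lemma \ref{lemprt}, and then iterating along the tower of maps $\jmath_1^1, \dots, \jmath_1^k$ in \eqref{partialk}. First I would observe that the analogue of \eqref{ind2} holds with $D$ replaced by $\pi(D)^*$: indeed, the argument producing \eqref{ind2} only used the injectivity of $u^+$ (Lemma \ref{inj}), and since $u^+$ is injective on $\pi(D)^*$ by \cite[Cor.~2.7]{Dos11}, the same induction gives a $P^+$-equivariant identification
\begin{equation*}
(\pi(D)^* \otimes_E V_k)[\fc=(\alpha+k)^2-1] \cong \bigl((\pi(D)^* \otimes_E V_1^{\otimes(k-1)})[\fc=(\alpha+k-1)^2-1] \otimes_E V_1\bigr)[\fc=(\alpha+k)^2-1],
\end{equation*}
and correspondingly $\jmath_k = \jmath_1^k \circ \jmath_1^{k-1} \circ \cdots \circ \jmath_1^1$. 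This realizes $\Ima(\jmath_k)$ as the image of applying $\partial$ successively, so the operators in \eqref{partialk} are well defined.

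Next I would run the induction on $k$. The base case $k=1$ is exactly Lemma \ref{lemprt}: for $u \in \Ima(\jmath_1)$ we have $g(u) \in (-c\partial + a)\Ima(\jmath_1)$ and $\jmath_1(g(v)) = \det(g)(-c\partial+a)^{-1} g(\jmath_1(v))$. For the inductive step, I would apply the $k=1$ statement at the top level of the tower — that is, to the module $(\pi(D)^*\otimes_E V_{k-1})[\fc=(\alpha+k-1)^2-1]$ in place of $\pi(D)^*$, which carries a $\gl_2$-action with $\fc = (\alpha+k-1)^2-1$ and $\fz = \alpha+k-2$, and on which $u^+$ is injective (again by the Lemma \ref{inj}-type argument, using injectivity of $u^+$ on $\pi(D)^*$). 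This yields $\jmath_1^k(g(v)) = \det(g)(-c\partial+a)^{-1} g(\jmath_1^{k-1}\cdots\jmath_1^1(v))$ as an element of $\Ima(\jmath_1^{k-1})$, and one then applies the inductive hypothesis to $\jmath_1^{k-1}\cdots\jmath_1^1$ to rewrite $g(\jmath_1^{k-1}\cdots\jmath_1^1(v))$. The one point requiring care is that $\partial$ and $g$ do not commute: one must check that $(-c\partial+a)^{-1}$ interacts correctly with $g$ across the levels, i.e.\ that composing the level-$k$ relation with the level-$(k-1)$ relation produces exactly $\det(g)^k(-c\partial+a)^{-k}g(\jmath_k(v))$ rather than some reordered product. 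Since each $\partial$ is the same operator $\frac{a^+}{u^+}$ (transported through the identifications) and Lemma \ref{lemprt} is already phrased so that the $g$-action is pushed all the way to the right, the telescoping is clean; the membership statement $g(u)\in(-c\partial+a)^k\Ima(\jmath_k)$ follows by the same bookkeeping.

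The main obstacle I anticipate is purely notational: keeping track of which copy of $\partial$ acts on which module in the tower \eqref{partialk}, and verifying that the operators $(-c\partial+a)$ at different levels are compatible under the identifications \eqref{ind2} so that $(-c\partial+a)^{-k}$ literally makes sense as a single operator on $\Ima(\jmath_k) \subset \pi(D)^*$. The analytic inputs — injectivity of $u^+$ on $\pi(D)^*$ (hence on all the intermediate eigenspaces), and the injectivity of $\jmath_k$ (the lemma preceding Lemma \ref{lie}) — are already in hand, so no genuinely new difficulty arises; the proof is a formal induction assembling Lemma \ref{lemprt} with the $\pi(D)^*$-analogue of \eqref{ind2}.
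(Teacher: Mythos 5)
Your proposal is correct and follows exactly the route the paper takes: the paper's entire proof is the phrase ``by Lemma \ref{lemprt} and induction (with an analogue of (\ref{ind2})))'', and you have simply spelled out the base case, the application of the $k=1$ lemma to the module $(\pi(D)^*\otimes_E V_{k-1})[\fc=(\alpha+k-1)^2-1]$ at the top of the tower, and the compatibility of the operators $\partial=a^+/u^+$ across the $P^+$-equivariant identifications so that the factors $(-c\partial+a)^{-1}$ telescope. No gap; nothing further is needed.
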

\begin{remark}
(1)	In particular, one can construct the representation $(\pi(D)^* \otimes_EV_k)[\fc=(\alpha+k)^2-1]$ from $\pi(D)^*$: Let $M$ be the subspace of $\pi(D)^*$ consisting of vectors $v$ such that $\nabla_i(v)\in (u^+)^i \pi(D)^*$ for $i=1,\cdots, k-1$, where $\nabla_i:=(a^+-i+1)\cdots a^+$. For $g\in \GL_2(\Q_p)$ and $v\in M$, one can show that $g(v)$ lies in  $(-c\partial+a)^k M$. The formula $$g *_k v:=\det(g)^k (-c\partial+a)^k g(v)$$
	defines a $\GL_2(\Q_p)$-action on $M$. The topology on $M$ is a bit subtle. If $M$ is closed in $\pi(D)^*$ (for example when $D$ is de Rham, by \cite[Prop.~9.1]{DLB}), we equip $M$ with the induced topology. In general, using (\ref{casmirk}), from $v_0:=v\in M$, we inductively construct $\{v_i\}_{i=0,\cdots, k}$ with $v_i \in \pi(D)^*$, and obtain an injection $M \hookrightarrow \pi(D)^* \otimes_E V_k$, $v\mapsto \sum_{i=0}^{k} v_i \otimes e_i$. The image is closed with $\pi(D)^* \otimes_E V_k\cong (\pi(D)^*)^{\oplus k+1}$ as topological vector space (as it is exactly the $\fc=(\alpha+k)^2-1$ eigenspace), and we equip $M$ with the induced topology. It is then clear  $M\cong (\pi(D)^* \otimes_E V_k)[\fc=(\alpha+k)^2-1]$. When $D$ is de Rham of constant Sen weights $(0,0)$, $\Ima(\jmath_k)=D$, this reveals Colmez's construction of $\pi(D,k)^*$ ($\cong (\pi(D)^* \otimes_EV_k)[\fc=k^2-1]$).

	(2) If $u^+$ is not injective or equivalently $D$ is trianguline, the kernel of   $\jmath_1$ consists  exactly of $v_1 \otimes e_1$ with $v_1\in \pi(D)^*[u^+=0]$, and is not stabilized by $\GL_2(\Q_p)$. So in this case, we can not directly construct $(\pi(D)^* \otimes_E V_1)[\fc=(\alpha+1)^2-1]$ from certain subspaces of $\pi(D)^*$ using a twisted $\GL_2(\Q_p)$-action. 
\end{remark}
Finally, we discuss the relation of involutions. We keep the assumption on $D$ in the first paragraph of the section (while $D$ can be trianguline). Let $D'\subset D$ be a $(\varphi, \Gamma)$-submodule of Sen weights $(0,\alpha+k)$. If $\alpha\neq 0$ or $\alpha=0$ and $D$ is not de Rham, then $D'\cong D_{(0,\alpha+k)}$. Similarly, as in (\ref{partialk}), we have operators
\begin{equation*}
\partial^k: D_{(0,\alpha+k)} \xlongrightarrow{\partial} D_{(0,\alpha+k-1)} \xlongrightarrow{\partial} \cdots \xlongrightarrow{\partial} D_{(0,\alpha+1)} \xlongrightarrow{\partial} D.
\end{equation*}
If $\alpha=0$ and $D$ is de Rham, then similarly as in (\ref{partialk}) we have $\partial^k: D\ra D$. In any case, we have $\partial^k=\frac{\nabla_k}{t^k}$. We have the following relation on the involutions.
\begin{proposition} 
	We have $w_{D'}=w_D\circ \frac{\nabla_k}{t^k}=w_D \circ \partial^k$.
\end{proposition}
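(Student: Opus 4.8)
The plan is to read the identity off from the $\GL_2(\Q_p)$‑sheaf isomorphisms already established, by comparing the involutions that glue the sheaves over $\bP^1(\Q_p)$, and to induct on $k$. I first treat $k=1$. By Theorem~\ref{CoWthm1} (or Theorem~\ref{CoWThm2} together with Remark~\ref{rem36}(4) when $D$ is de Rham of weights $(0,0)$) there is a $\GL_2(\Q_p)$‑equivariant morphism $D'\boxtimes_{\delta_{D'}}\bP^1(\Q_p)\ra(\sG_{D,\delta_D}\otimes_E V_1)[\fc=\mu]$ over $\bP^1(\Q_p)$, $\mu=(\alpha+1)^2-1$, which is an isomorphism unless $D$ is de Rham of weights $(0,0)$ and is in any case a closed embedding. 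Unwinding its construction (Proposition~\ref{tranSh} and Corollary~\ref{faistran}, both realized by the identity on $D\otimes_E V_1$, composed with the translation isomorphism of Proposition~\ref{transProp} and the uniqueness of the compatible involution \cite[Prop.~3.17]{Colm16}), this morphism is, on sections over $\Z_p$, the map $\Psi_1\colon D'\ra D\otimes_E V_1$, $x\mapsto x\otimes e_0+\frac{\nabla}{t}x\otimes e_1$ of (\ref{inf0}), where $e_0=e$, $e_1=te$; note that the projection (\ref{projna}) satisfies $p_0\circ\Psi_1=\id$. Since a morphism of $\GL_2(\Q_p)$‑sheaves over $\bP^1(\Q_p)$ commutes with the involutions on $\Res_{\Z_p^\times}$ of the $M^+$'s coming from the action of $\begin{pmatrix}0&1\\1&0\end{pmatrix}$, and since that involution on the target is the restriction of the diagonal $w_D\otimes w$ (with $w$ swapping $e_0\leftrightarrow e_1$ on $V_1$, because $e$ is the lowest weight vector), we get $\Psi_1\circ w_{D'}=(w_D\otimes w)\circ\Psi_1$ on $(D')^{\psi=0}$. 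Applying $p_0$ and using $w(e_0)=e_1$, $w(e_1)=e_0$, the left side becomes $w_{D'}$ while the right side becomes $w_D\circ\frac{\nabla}{t}$; hence $w_{D'}=w_D\circ\frac{\nabla}{t}=w_D\circ\partial$. (That $\partial$ carries $(D')^{\psi=0}$ into $D^{\psi=0}$ is automatic, $\Psi_1$ being $\psi$‑equivariant and $(D\otimes_E V_1)^{\psi=0}=D^{\psi=0}\otimes_E V_1$.)

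For general $k$ I induct along a chain of $(\varphi,\Gamma)$‑submodules $D'=D'_k\subset D'_{k-1}\subset\cdots\subset D'_1\subset D$ with $D'_j$ of Sen weights $(0,\alpha+j)$: outside the de Rham weight‑$(0,0)$ case take $D'_j=D_{(0,\alpha+j)}$, and in that case take $D'_j=D_{j,\sL}$ for a fixed line $\sL$, so that in either situation $D'_j$ is the unique $(\varphi,\Gamma)$‑submodule of $D'_{j-1}$ of its Sen weights. Each $D'_{j-1}$ is again indecomposable of rank $2$ and does not contain a pathological submodule (as $D$ does not), and for $j\geq 2$ its second Sen weight $\alpha+j-1$ is nonzero, so $D'_{j-1}$ falls under Theorem~\ref{CoWthm1} and the case $k=1$ applies to the pair $D'_{j-1}\supset D'_j$, the module $D'_{j-1}$ being equipped with its natural $\gl_2$‑action (parameter $\alpha+j-1$). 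This yields $w_{D'_j}=w_{D'_{j-1}}\circ\frac{\nabla}{t}$, where $\frac{\nabla}{t}$ is the restriction to $D'_j$ of the operator on $D$. Composing the $k$ identities gives $w_{D'}=w_D\circ\bigl(\frac{\nabla}{t}\bigr)^k$, and since $\nabla(t^{-1}y)=t^{-1}(\nabla-1)y$ one has $\bigl(\frac{\nabla}{t}\bigr)^k=\frac{(\nabla-k+1)\cdots\nabla}{t^k}=\frac{\nabla_k}{t^k}=\partial^k$, which is the assertion.

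The step that needs real care is the identification in the first paragraph: that the $\GL_2(\Q_p)$‑sheaf morphism of Theorem~\ref{CoWthm1}/\ref{CoWThm2} (resp. the embedding of Remark~\ref{rem36}(4)) restricts on $\Z_p$‑sections to $\Psi_1$ and carries $w_{D'}$ to the restriction of $w_D\otimes w$. Here one must track through Proposition~\ref{tranSh} and Corollary~\ref{faistran} (both induced by the identity map), the description of $(D\otimes_E V_1)[\fc=\mu]$ as a submodule of $D$ in (\ref{inf0}), and the uniqueness of the compatible involution, to be sure that $w_{D'}$ and the $w_{D'_{j-1}}$ really are the gluing involutions occurring there; the fact that $e$ is a genuine weight vector is what makes the constant in $p_0\circ(w_D\otimes w)\circ\Psi_1=w_D\circ\frac{\nabla}{t}$ come out to $1$. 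A secondary point is the bookkeeping of the de Rham weight‑$(0,0)$ case, where $(D\otimes_E V_k)[\fc=k^2-1]\cong D$ is not a proper submodule, so one uses the embedding $D'_j\boxtimes\bP^1(\Q_p)\hookrightarrow D'_{j-1}\boxtimes_{z\delta_{D'_{j-1}},\,w_{D'_{j-1},1}}\bP^1(\Q_p)$ of Remark~\ref{rem36}(4) in place of an isomorphism, but the computation of $p_0\circ(w_D\otimes w)\circ\Psi_1$ is unaffected; note also that for $j\geq 2$ the ambient module $D'_{j-1}$ has Sen weights $(0,\alpha+j-1)$ with $\alpha+j-1\neq 0$, so the induction steps never re‑enter this case.
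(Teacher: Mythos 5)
Your argument is correct and follows the same route as the paper: reduce to $k=1$, identify $w_{D'}$ (resp.\ $w_{D,1}$ in the de Rham weight-$(0,0)$ case, combined with $w_{D'}=w_{D,1}|_{(D')^{\psi=0}}$ from \cite[Prop.~2.4, Rem.~2.5]{Colm18}) with the restriction of $w_D\otimes\begin{pmatrix}0&1\\1&0\end{pmatrix}$ under the embedding $(D')^{\psi=0}\hookrightarrow D^{\psi=0}\otimes_E V_1$, $v_0\mapsto v_0\otimes e_0+\frac{\nabla}{t}v_0\otimes e_1$ of (\ref{inf0}), and then project to the $e_0$-component; the paper's proof is just a terser version of the same computation, with the general case likewise done by induction.
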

\begin{proof}
	We only prove the case for $k=1$, the general case following by an induction argument.
	
	 Assume first $\alpha\neq 0$ or $\alpha=0$ and $D$ is not de Rham. We have $D'=D_{(0,\alpha+1)}$. By Theorem \ref{CoWthm1}, we see $w_{D'}=w_D \otimes \begin{pmatrix}
	0 & 1 \\ 1 & 0
	\end{pmatrix}$ with $(D')^{\psi=0}\hookrightarrow D^{\psi=0} \otimes_E V_1$. For $v=v_0 \otimes e_0+v_1 \otimes e_1$, $\jmath_1(w_{D'}(v))=w_D(v_1)=w_D(\partial(v_0))=w_D(\partial(\jmath_1(v)))$. 
	
	Assume now $\alpha=0$ and $D$ de Rham, by the same argument we have $w_{D,1}=w_D \circ \partial$ as operator on $D^{\psi=0}$. By \cite[Prop.~2.4, Rem.~2.5]{Colm18}, $w_{D'}=w_{D,1}|_{(D')^{\psi=0}}$. The proposition follows. 
\end{proof}


\begin{thebibliography}{10}
	
	\bibitem{Ber08a}
	Laurent Berger.
	\newblock {\'E}quations diff{\'e}rentielles $p$-adiques et $(\varphi,
	{N})$-modules filtr{\'e}s.
	\newblock {\em Ast{\'e}risque}, 319:13--38, 2008.
	
	\bibitem{Br16}
	Christophe Breuil.
	\newblock $\mathrm{Ext}^1$ localement analytique et compatibilit\'e
	local-global.
	\newblock {\em American J. of Math}, 141:611--703, 2019.
	
	\bibitem{BD2}
	Christophe Breuil and Yiwen Ding.
	\newblock Sur un probl\`eme de compatibilit\'e local-global localement
	analytique.
	\newblock {\em Memoirs of the Amer. Math. Soc.}
	\newblock to appear.
	
	\bibitem{Colm10a}
	Pierre Colmez.
	\newblock Repr{\'e}sentations de $\mathrm{GL}_2(\mathbb{Q}_p)$ et
	{$(\varphi,\Gamma)$}-modules.
	\newblock {\em Ast{\'e}risque}, 330:281--509, 2010.
	
	\bibitem{Colm16}
	Pierre Colmez.
	\newblock Repr\'esentation localement analytique de
	{$\mathrm{GL}_2(\mathbb{Q}_p)$} et $(\varphi, {\Gamma})$-modules.
	\newblock {\em Representation theory}, 20:187--248, 2016.
	
	\bibitem{Colm18}
	Pierre Colmez.
	\newblock Correspondance de {Langlands} locale $ p $-adique et changement de
	poids.
	\newblock {\em Journal of the European Mathematical Society}, 21(3):797--838,
	2018.
	
	\bibitem{Ding4}
	Yiwen Ding.
	\newblock $\mathcal{L}$-invariants, partially de {Rham} families, and
	local-global compatibility.
	\newblock {\em Annales de l'Institut Fourier}, 67(4):1457--1519, 2017.
	
	\bibitem{Ding10}
	Yiwen Ding.
	\newblock Locally analytic $\mathrm{Ext}^1$ for $\mathrm{GL}_2(\mathbb{Q}_p)$
	in de {Rham} non trianguline case.
	\newblock {\em Representation theory}, 26:122--133, 2022.
	
	\bibitem{Dos11}
	Gabriel Dospinescu.
	\newblock Equations diff{\'e}rentielles $p$-adiques et modules de {Jacquet}
	analytiques.
	\newblock {\em Automorphic Forms and Galois Representations}, 1:359--374, 2011.
	
	\bibitem{DLB}
	Gabriel Dospoinescu and Arthur-C\'esar Le~Bras.
	\newblock Rev\^etements du demi-plan de {D}rinfeld et correspondance de
	{L}anglands $p$-adique.
	\newblock {\em Annals of mathematics}, 186, 2017.
	
	\bibitem{Fo04}
	Jean-Marc Fontaine.
	\newblock Arithm\'etique des repr\'esentations galoisiennes $p$-adiques.
	\newblock {\em Ast\'erisque}, (295):115 p., 2004.
	
	\bibitem{JLS21}
	Akash Jena, Aranya Lahiri, and Matthias Strauch.
	\newblock Translation functors for locally analytic representations.
	\newblock {\em arXiv preprint arXiv:2107.08493}, 2021.
	
	\bibitem{Liu07}
	Ruochuan Liu.
	\newblock Cohomology and duality for ({$\varphi$}, {$\Gamma$})-modules over the
	{Robba} ring.
	\newblock {\em International Mathematics Research Notices}, 2007, 2007.
	
\end{thebibliography}
\end{document}